\theoremstyle{plain}
\newtheorem{theorem}{Theorem}[section]
\newtheorem{lemma}{Lemma}[section]
\newtheorem{proposition}{Proposition}[section]
\newtheorem{corollary}{Corollary}[section]
\theoremstyle{definition}
\newtheorem{remark}{Remark}[section]
\numberwithin{equation}{section}
\newcommand{\n}{\mathbb{N}}
\newcommand{\<}{\left\langle}  
\renewcommand {\>}{\right\rangle}  
\newcommand{\norma}[1]{\left\|#1\right\|}
\newcommand{\pr}{\mathbb{P}}
\newcommand{\ew}{\mathbb{E}}
\newcommand{\dd}{\mathbf{d}}
\title{
The central limit theorem for Markov processes that are exponentially ergodic in the bounded-Lipschitz norm}
\author[1]{Dawid Czapla\,%\orcidlink{0000-0002-0562-773X}
}
\author[1]{Katarzyna Horbacz\,%\orcidlink{0000-0003-2900-0031}
}
\author[1,2]{Hanna~Wojew\'odka-\'Sci\k{a}\.zko\,%\orcidlink{0000-0002-8248-5018}
\thanks{Corresponding author; e-mail address: \href{mailto:hanna.wojewodka@gmail.com}{h.wojewodka@iitis.pl}}}
\affil[1]{\small Institute of Mathematics, University of Silesia in Katowice, Bankowa 14, 40-007 Katowice, Poland}
\affil[2]{\small 
Institute of Theoretical and Applied Informatics, Polish Academy of Sciences, Ba\l tycka 5, 44-100 Gliwice, Poland}
\date{}
\begin{document}
\maketitle
\begin{abstract}
In this paper, we establish a version of the central limit theorem for Markov--Feller continuous time processes (with a Polish state space) that are exponentially ergodic in the bounded-Lipschitz distance and enjoy a continuous form of the Foster--Lyapunov condition. As an example, we verify the assumptions of our main result for a specific piecewise-deterministic Markov process, whose deterministic component evolves according to continuous semiflows, switched randomly at the jump times of a~Poisson process.
\end{abstract}
{\small \noindent
{\bf Keywords:} Markov process, central limit theorem,  martingale method, exponential ergodicity, bounded-Lipschitz distance
}\\
{\bf 2010 AMS Subject Classification:} 60J25, 37A30, 46E27\\

\section*{Introduction}\label{sec:intro}

We are concerned with the~asymptotic behavior of the process $\Gamma_g:=\{t^{-1/2}\hspace{-0.1cm}\int_0^t g\left(\Psi(s)\right)ds\}_{t\geq 0}$, where \hbox{$\Psi:=\{\Psi(t)\}_{t\geq 0}$} is a non-stationary, time-homogeneous Markov process evolving on a~Polish metric space $E$, with an arbitrary transition semigroup $\{P(t)\}_{t\geq 0}$, and $g:E\to\mathbb{R}$ is a bounded Lipschitz continuous observable. More specifically, our main goal is to provide some testable conditions on $\{P(t)\}_{t\geq 0}$ under which $\Psi$ has a unique invariant distribution, say $\mu_*$, and {$\Gamma_{\bar{g}}(t)$}, with $\bar{g}:=g-\int_E g\,d\mu_*$,
converges in law (as $t\to\infty$) to a~centered normal random variable, or, in other words, under which the process $\{\bar{g}(\Psi(t))\}_{t\geq 0}$ obeys the central limit theorem (CLT). 

The CLT is definitely the fundamental one in probability theory and statistics. 
Initially formulated for independent and identically distributed random variables, it was thereafter generalized to martingales (see \cite{levy35}), which has constituted a background for proving various versions of the CLT pertaining to Markov processes. 
First results in this field deal with stationary Markov chains (with discrete time) for which the existence of a $\mu_*$-square integrable solution to the  Poisson equation is guaranteed (see, e.g., \cite{derrenic_lin01, gordin_lifsic78, gordin_lifsic81}). During the later years, many attempts have been made to relax this assumption. For instance, \cite{kipnis_varadhan86} refers to the so-called reversible Markov chains and is based on approximating (in a certain sense) the solutions of the Poisson equation, while \cite{mw} introduces a testable condition relying on the convergence of some series. Another noteworthy article is \cite{subgeom}, where, among others, the principal hypothesis of \cite{mw} is reached by assuming a subgeometric rate of convergence of Markov chain's distribution to a stationary one in terms of the  Wasserstein distance. Furthermore, it should be mentioned that, over the years, the CLT has been also established for certain stationary Markov processes with continuous time parameter; {see e.g., \cite{Bhattacharya, klo} and the results of \cite{holzmann05} for ergodic processes with normal generators (extending those of \cite{gordin_lifsic81}).}

In recent times, however, most attention has been paid to non-stationary Markov processes. Some classical results on the CLT in this case can be found in \cite{mt93}. They involve positive Harris reccurent and aperiodic (discrete-time) Markov chains (or, equivalently, those which are irreducible and ergodic in the total variation norm), for which a drift condition towards petite sets is fulfilled (which guarantees the existence of a suitable solution to the Poisson equation). Such requirements are, however, practically unattainable in non-locally compact state spaces. A version of the CLT for a subclass of non-stationary Markov chains evolving on a general (Polish) metric space, based on a kind of geometric ergodicity in the bounded-Lipschitz distance and the `second order'  Foster-Lyapunov type condition (where a solution to the Poisson equation is not required) is established in \cite{clt}.  In the context of processes with continuous time parameter, probably the most general result of this kind to date, but {relying} on the exponential ergodicity in the Wasserstein distance {(additionally, distinct in nature from that assumed in \cite{clt} or \cite{ergodic_pdmp})}, is stated in \cite{kom_walczuk}. An analogous result in the discrete-time case can be found in \cite{gulgowski}.

The results established in this article are mainly inspired by \cite{kom_walczuk}. A major motivation for the current study was the inability to directly apply the CLT established by Komorowski and Walczuk \cite{kom_walczuk} to some subclass of piecewise-deterministic Markov processes {(PDMPs)},  at least under (relatively natural) conditions imposed in \cite[Proposition 7.2]{ergodic_pdmp} (see also \hbox{\cite{ergodic_prop, dawid-asia}}).

The problem lies in accomplishing the exponential mixing in the sense of condition (H1), employed in \cite{kom_walczuk} (cf. also \cite{gulgowski}), which requires a form of the Lipschitz continuity of each $P(t)$ with respect to the \textit{Wasserstein distance} $d_{\text{W}}$ (see, e.g., \cite[p. 5]{kom_walczuk} for its definition). More precisely, the authors assume the existence of  $\gamma>0$ and $c<\infty$ such that for any two Borel probability measures $\mu$ and $\nu$ (with finite first moments) the following holds:
\begin{equation}\label{cond:kom}
d_{\text{W}}\left(\mu P(t),\nu P(t)\right)\leq ce^{-\gamma t}d_{\text{W}}\left(\mu,\nu\right)\;\;\;\text{for any}\;\;\;t\geq 0.
\end{equation}
We have therefore recognized the need to provide a new, somewhat more useful, criterion that would involve a weaker form of the above requirement, similar to those occuring, for instance, in  \cite{ergodic_prop, ergodic_pdmp, dawid-asia,kapica_sleczka,hania} (cf. also \cite{hairer}). More precisely, instead of \eqref{cond:kom}, we assume that there is $\gamma>0$ such that, for any two Borel probability measures $\mu$ and~$\nu$, there exist a~continuous function $V:E\to [0,\infty)$ and constants $\beta>0$, $\delta\in (0,1)$, for which
\begin{equation}\label{cond:our}
d_{\text{FM}}\left(\mu P(t), \nu P(t)\right) \leq \beta e^{-\gamma t} \left(\int_E V \, d(\mu+\nu)+1\right)^{\delta}  \quad \text{for any} \quad t\geq 0,
\end{equation}
where $d_{\text{FM}}$ stands for the \textit{bounded-Lipschitz distance}, also known as the \textit{Fortet--Mourier metric} (cf., e.g., \cite[p. 236]{lasota_from_fractals} or \cite[p. 192]{bogachev}). 
Besides, an additional advantage of our approach is that this metric is weaker than the Wasserstein one, among others, in a manner enabling the use of a coupling argument (introduced by M. Hairer in \cite{hairer}, and further applied, e.g., in \cite{ergodic_prop, clt, ergodic_pdmp, dawid-asia,kapica_sleczka, sleczka, hania}) to reach an exponential mixing property with respect to $d_{\text{FM}}$, 
which fails while demanding it in terms of $d_{\text{W}}$.

Hypotheses (H2) and (H3) used in \cite{kom_walczuk} roughly ensure that the semigroup $\{P(t)\}_{t\geq 0}$ preserves the  finiteness of measure moments of a given, greater than $2$, order. In the present paper, they are both replaced by a strengthened version of the continuous Lyapunov condition, from which, in practice, such properties are usually derived.

As mentioned above, the proof of our main result, that is, Theorem \ref{thm:main}, is in many places based on the reasoning presented in \cite{kom_walczuk}. Nevertheless, it should be emphasized that without a Lipschitz type assumption on the semigroup $\{P(t)\}_{t\geq 0}$, such as \eqref{cond:kom} (or its discrete-time analogue, employed, e.g., in \cite{gulgowski}), proving the principal limit theorems, like the central one or the law of the iterated logarithm, requires some more subtle arguments, which is reflected, e.g., in \cite{clt,lil} or \cite{kom_szar_peszat}. Most importantly, under condition \eqref{cond:our}, the so-called \textit{corrector function} $\chi:E\to\mathbb{R}$, given by
$$\chi(x)=\int_0^{\infty}P(t)\bar{g}(x)\,dt\;\;\;\text{for any}\;\;\;x\in E,$$
does not need to be Lipschitzian (which is a meaningful argument in the proof of \cite[Theorem 2.1]{kom_walczuk}), but is only continuous.  
Another problem arising in our setting is that the weak convergence of the process distribution (towards the stationary one), guaranteed by \eqref{cond:our}, yields the convergence of the corresponding integrals as long as the integrands are (apart of being continuous) bounded, which is not required while using the Wasserstein distance. This fact prevents, among others, a direct adaptation of the final argument used in the proof of \cite[Lemma 5.5]{kom_walczuk}. We have overcome this obstacle (see Lemma \ref{lem:M2}) by making the use of \cite[Lemma 8.4.3]{bogachev}, which allows replacing the boundedness of the integrand by its uniform integrability with respect to the family of measures constituing the convergent sequence under consideration. 
Finally, let us indicate that the key role in our proof is played by Lemma \ref{lem:E_x_Z(i)^p}. In contrast, results of this nature are not necessary in \cite{kom_walczuk}, since they are in some sense buid-in a priori in hypotheses (H2) and (H3). 

The article is organized as follows. In Section~\ref{sec:1}, 
we gather notation used throughout the paper, as well as recall some basic definitions and facts in the field of measure theory and Markov semigroups. We also quote here a version of the CLT for martingales, crucial for the reasoning line presented in the paper. Section~\ref{sec:2} is devoted to formulating assumptions and the main result, namely Theorem~\ref{thm:main}. {In this part, it is also shown that the~assumptions employed imply the existence of a unique invariant distribution of $\Psi$. The~proof of the~main theorem, along with all auxiliary results, is given in Section~\ref{sec:3}. Further, in Section~\ref{sec:4}, drawing on some ideas from~\cite{Bhattacharya}, we provide a concise representation of the variance of the~limiting normal distribution (involved in Theorem \ref{thm:main}). Additionaly, in Section~\ref{sec:fclt}, we derive a~straightforward conclusion from  \hbox{\cite[Theorem 2.1]{Bhattacharya}} concerning the functional CLT in the stationary case. Finally, in Section~\ref{sec:ex}, we demonstrate the usefulness of the main result by applying it to  establish the CLT for the PDMPs considered in \cite{ergodic_pdmp}.}

\section{Preliminaries}\label{sec:1}
First of all, put $\mathbb{N}_0:=\mathbb{N}\cup\{0\}$ and $\mathbb{R}_+=[0,\infty)$. In what follows, we shall consider a~complete separable metric space $(E,\rho)$, endowed with its Borel $\sigma$-field $\mathcal{B}(E)$. By $\operatorname{B}_b(E)$ we will denote the Banach space of all real-valued, Borel measurable bounded functions on $E$, equipped with the supremum norm $\|\cdot\|_{\infty}$. The subspaces of $\operatorname{B}_b(E)$ consisting of all continuous functions and all Lipschitz continuous functions shall be denoted by $\operatorname{C}_b(E)$ and $\operatorname{Lip}_b(E)$, respectively. Throughout the paper, we will also refer to a particular subset $\operatorname{Lip}_{b,1}(E)$ of $\operatorname{Lip}_b(E)$, defined as 
$$
\operatorname{Lip}_{b,1}(E):=\left\{f\in \operatorname{Lip}_b(E):\;\|f\|_{\text{BL}}\leq 1\right\},
$$
where the norm $\|\cdot\|_{\text{BL}}$ is given by 
$$
\|f\|_{\text{BL}}:=\max\left\{\|f\|_{\infty},\;\;\sup_{x\neq y}\frac{|f(x)-f(y)|}{\rho(x,y)}\right\} \;\;\;\text{for any}\;\;\;f\in\operatorname{Lip}_b(E).
$$

Furthermore, we will write $\mathcal{M}(E)$ for the space of all finite non-negative Borel measures on $E$. The subset of $\mathcal{M}(E)$ consisting of all probability measures will be, in turn, denoted by $\mathcal{M}_1(E)$. Moreover, for any given Borel measurable function $V:E\to\mathbb{R}_+$ and any $r>0$, let us {define} the subset $\mathcal{M}^V_{1,r}(E)$ of $\mathcal{M}_1(E)$ consisting of all measures with finite $r$-th moment with respect to $V$, that is, 
$$
\mathcal{M}^V_{1,r}(E):=\left\{\mu\in\mathcal{M}
_1(E):\;\int_EV^r(x)\mu(dx)<\infty\right\}.
$$
Clearly, for any $s\in (0,2]$, we have $\mathcal{M}^V_{1,2}(E)\subset \mathcal{M}_{1,s}^V(E)$, since $\int_E V^s\,d\mu \leq (\int_E V^2\,d\mu)^{s/2}$ for every $\mu\in\mathcal{M}_1(E)$, due to the H\"older inequality. { Also worth noting here is that, for every $r>0$, $\mathcal{M}^V_{1,r}(E)$ contains all Dirac measures $\delta_x$, $x\in E$, and, when $V$ is continuous, all compactly supported Borel probability measures on $E$.}

For brevity, we will often write $\langle f,\mu\rangle$ for the Lebesgue integral $\int_Ef\,d\mu$ of a Borel measurable function $f:E\to\mathbb{R}$ with respect to a signed Borel measure $\mu$, provided that it exists.

To evaluate the distance between measures, we will use the \emph{Fortet--Mourier metric} (equivalent to
the one induced by the Dudley norm), which on $\mathcal{M}(E)$ is given by
$$
d_{\text{FM},\rho}(\mu,\nu):=\sup_{f\in\operatorname{Lip}_{b,1}(E)}\left|\langle f,\mu-\nu\rangle\right|\;\;\;\text{for any}\;\;\;\mu,\nu\in\mathcal{M}(E).\vspace{-0.2cm}
$$

Let us recall that a sequence $\{\mu_n\}_{n\in\mathbb{N}_0}\subset \mathcal{M}(E)$ of measures is called \emph{weakly convergent} to a measure $\mu\in\mathcal{M}(E)$, which is denoted by $\mu_n\stackrel{w}{\to}\mu$, if for any $f\in \operatorname{C}_b(E)$ we have $\lim_{n\to\infty}\langle f,\mu_n\rangle=\langle f,\mu\rangle$. It is well-known  (see, e.g., \cite[Theorems 8 and 9]{dudley}) that, if $(E,\rho)$ is Polish (which is the case here), then the weak convergence of any sequence of probability measures is equivalent to its convergence in the Fortet--Mourier distance, and also the space $(\mathcal{M}_1(E), d_{\text{FM},\rho})$ is complete. 

In fact, according to \cite[Lemma 8.4.3]{bogachev}, the weak convergence $\mu_n\stackrel{w}{\to}\mu$ of probability measures ensures the convergence of the corresponding integrals even in the case of continuous but not necessarily bounded functions, provided that they are uniformly integrable with respect to $\{\mu_n\}_{n\in\mathbb{N}_0}$. An easily verifiable condition guaranteeing this property, and  thus also the above indicated statement, is presented in the following result: 

\begin{lemma}\label{lem:bogachev}
Let $\{\mu_n\}_{n\in\mathbb{N}_0}\subset \mathcal{M}_1(E)$ be weakly convergent to some $\mu\in\mathcal{M}_1(E)$. Then, for every continuous function $h:E\to\mathbb{R}$ satisfying\, $\sup_{n\in\mathbb{N}_0}
\left\langle |h|^q,\mu_n\right\rangle <\infty$
with some $q>1$, we have\, $\lim_{n\to\infty}\left\langle h,\mu_n\right\rangle=\left\langle h,\mu\right\rangle$.
\end{lemma}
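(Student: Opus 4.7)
The plan is to reduce the claim to \cite[Lemma 8.4.3]{bogachev}, which asserts precisely that weak convergence $\mu_n\stackrel{w}{\to}\mu$ of probability measures together with uniform $\{\mu_n\}$-integrability of a continuous function $h$ yields $\langle h,\mu_n\rangle\to\langle h,\mu\rangle$. Thus the only work is to deduce uniform integrability from the $L^q$-bound $M:=\sup_{n\in\mathbb{N}_0}\langle |h|^q,\mu_n\rangle<\infty$ with $q>1$.

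First I would verify the uniform integrability by the standard de~la~Vall\'ee--Poussin / truncation trick: for any $R>0$ and any $n\in\mathbb{N}_0$, on the set $\{|h|\geq R\}$ one has $|h|=|h|^q\cdot |h|^{1-q}\leq R^{1-q}|h|^q$, since $1-q<0$. Integrating yields
\begin{equation*}
\int_{\{|h|\geq R\}}|h|\,d\mu_n\;\leq\;R^{1-q}\int_E|h|^q\,d\mu_n\;\leq\;R^{1-q}M,
\end{equation*}
and the right-hand side tends to $0$ as $R\to\infty$, uniformly in $n$. This is exactly the uniform integrability condition required by \cite[Lemma 8.4.3]{bogachev}.

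Finally, for the limiting integral $\langle h,\mu\rangle$ to be well defined, I would note that $|h|^q$, being continuous and nonnegative, is lower semicontinuous, so by the Portmanteau theorem (or Fatou's lemma for weak convergence) $\langle |h|^q,\mu\rangle\leq\liminf_{n\to\infty}\langle |h|^q,\mu_n\rangle\leq M$, and then H\"older's inequality on the probability space $(E,\mu)$ gives $\langle |h|,\mu\rangle\leq M^{1/q}<\infty$. An application of \cite[Lemma 8.4.3]{bogachev} then concludes the proof.

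The argument is short and the only subtlety is conceptual, namely recognizing that the hypothesis $\sup_n\langle |h|^q,\mu_n\rangle<\infty$ with $q>1$ is a particularly handy sufficient condition for uniform integrability; there is no genuine technical obstacle.
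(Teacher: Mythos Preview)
Your proposal is correct and follows essentially the same approach as the paper: both reduce to \cite[Lemma 8.4.3]{bogachev} by showing uniform integrability via the elementary estimate $\int_{\{|h|\geq R\}}|h|\,d\mu_n\leq R^{1-q}M$. Your additional remark that $\langle |h|,\mu\rangle<\infty$ (via lower semicontinuity and Portmanteau) is a small nicety the paper omits, but otherwise the arguments coincide.
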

\begin{proof}
Let $h:E\to\mathbb{R}$ be a continuous function such that $M:=\sup_{n\in\mathbb{N}_0}
\left\langle |h|^q,\mu_n\right\rangle <\infty$  with some $q>1$. Then, for every $R>0$, we have
\begin{align*}
\int_{\{|h|\geq R\}} |h(x)|\,\mu_n(dx)
&=\frac{1}{R^{q-1}} \int_{\{|h|\geq R\}} R^{q-1}|h(x)|\,\mu_n(dx)
 \leq \frac{1}{R^{q-1}} \int_{\{|h|\geq R\}} |h(x)|^q\,\mu_n(dx)\\
&\leq  \frac{1}{R^{q-1}}  \left\langle |h|^q,\mu_n\right\rangle \leq \frac{M}{R^{q-1}}
\quad \text{for all}\quad n\in\mathbb{N}_0.
\end{align*}
Keeping in mind that $q>1$, we therefore see that
$$
\lim_{R\to \infty} \sup_{n\in\mathbb{N}_0}\, \int_{\{|h|\geq R\}} |h(x)|\,\mu_n(dx)=0,
$$
but this, according to \cite[Lemma 8.4.3]{bogachev}, already implies the assertion of the lemma.
\end{proof}

\subsection{{Markov semigroups}}\label{sec:markov_operators}
Let us now recall some basic concepts from the theory of Markov operators, to be employed in the remainder of the paper.

A function $P : E \times \mathcal{B}(E) \to [0, 1]$ is called a \emph{stochastic kernel} if $E\ni x\mapsto P(x, A)$ is a~Borel measurable map for each $A\in\mathcal{B}(E)$, and $\mathcal{B}(E)\ni A \mapsto P(x, A)$ is a Borel probability measure for each $x\in E$. The composition of any two such kernels, say $P$ and $Q$, is defined by
\begin{equation}\label{composition}
PQ(x,A)=\int_EQ(y,A)\,P(x,dy)\;\;\;\text{for any}\;\;\;x\in E\;\;\;\text{and any}\;\;\;A\in\mathcal{B}(E).
\end{equation}

Given a stochastic kernel $P$, we can define two corresponding operators (here denoted by the same symbol, according to the convention employed, e.g., in \cite{mt93,klo}); one acting on~$\mathcal{M}(E)$, defined by 
\begin{equation}\label{def:muP}
\mu P(A):=\int_E P(x,A)\,\mu (dx)\;\;\;\text{for}\;\;\;\mu\in\mathcal{M}(E)\;\;\;\text{and}\;\;\;A\in\mathcal{B}(E),
\end{equation}
and the second one acting on $\operatorname{B}_b(E)$, given by
\begin{equation}\label{def:Pf}
Pf(x):=\int_Ef(y)\, P(x,dy)\;\;\;\text{for}\;\;\;f\in \operatorname{B}_b(E)\;\;\;\text{and}\;\;\;x\in E.
\end{equation}
These operators are related to each other so that
\begin{equation}\label{duality_property}
\left\langle f,\mu P\right\rangle=\left\langle Pf,\mu \right\rangle\;\;\;\text{for any}\;\;\;f\in \operatorname{B}_b(E)\;\;\;\text{and any}\;\;\;\mu\in\mathcal{M}(E).
\end{equation}
The operator $(\cdot)P : \mathcal{M}(E)\to\mathcal{M}(E)$, given by \eqref{def:muP}, is called a \emph{(regular) Markov operator}, and $P(\cdot): \operatorname{B}_b(E) \to \operatorname{B}_b(E)$, defined by \eqref{def:Pf}, is said to be its \emph{dual operator}. Obviously, the Markov operator (resp. its dual) corresponding to the composition of two given kernels in the sense of~\eqref{composition} is just the usual composition of the Markov operators (resp. their duals) induced by these kernels. Let us also highlight that the right-hand sides of \eqref{def:Pf} and \eqref{duality_property}, in fact, make sense for all Borel measurable, bounded below functions, and we will often write $Pf$ also for such functions~$f$ (obviously, in that case, $Pf$ takes values in $\mathbb{R}\cup\{\infty\}$).

Let $\mathbb{T}\in\{\mathbb{R}_+,\mathbb{N}_0\}$. A family of stochastic kernels $\{P(t)\}_{t\in\mathbb{T}}$ 
(or the induced family of Markov operators) is called a \emph{(regular) Markov semigroup} whenever \vspace{-0.1cm}
$$
P(s)P(t) = P(s+t)\quad\text{for any}\quad s,t\in\mathbb{T},\quad P(0)(x,\cdot) = \delta_x \quad\text{for every}\quad x\in E,\vspace{-0.1cm}
$$ 
and, if $T=\mathbb{R}_+$, the map \hbox{$\mathbb{R}_+\times E\ni (t,x)\mapsto P(t)(x,A)$} is $\mathcal{B}(\mathbb{R}_+\times E)/\mathcal{B}(\mathbb{R})$-measurable for any $A\in\mathcal{B}(E)$ (cf. the definition  of \emph{transition function} in \cite[p. 156]{ethier}).

We call a Markov semigroup $\{P(t)\}_{t\in\mathbb{T}}$  \emph{Feller} if, for each $t\in \mathbb{T}$, the dual operator~$P(t)(\cdot)$ preserves continuity of bounded functions, i.e., $P(t)(\operatorname{C}_b(E))\subset \operatorname{C}_b(E)$. A~measure \hbox{$\mu_*\in\mathcal{M}(E)$} is said to be \emph{invariant} for such a semigroup $\{P(t)\}_{t\in\mathbb{T}}$ whenever \hbox{${\mu}_*P(t)={\mu}_*$} for every $t\in\mathbb{T}$. Obviously, these concepts can be also referred to a single Markov operator.

Given a Markov semigroup $\{P(t)\}_{t\in\mathbb{T}}$ of stochastic kernels on $E\times \mathcal{B}(E)$, by a \emph{time-homogeneous Markov process} with \emph{transition semigroup} $\{P(t)\}_{t\in\mathbb{T}}$ we mean a family of $E$-valued random variables \hbox{$\Psi:=\{\Psi(t)\}_{t\in\mathbb{T}}$} 
on some probability space $(\Omega,\mathcal{F},\mathbb{P})$ such that, for any $s, t\in\mathbb{T}$, $A \in\mathcal{B}(E)$, and $x\in E$,
\begin{gather}\label{P_mu_cnt}
\begin{aligned}
&\mathbb{P}\left(\Psi(s+t)\in A\,|\,\mathcal{F}(s)\right)=\mathbb{P}\left(\Psi(s+t)\in A\,|\,\Psi(s)\right)\;\;\text{a.s.},\\
&\pr(\Psi(s+t)\in A\,|\,\Psi(s)=x)=P(t)(x,A),
\end{aligned}
\end{gather}
where $\{\mathcal{F}(s)\}_{s\in\mathbb{T}}$ is the natural filtration of $\Psi$. {Obviously, $\Psi$ can also be regarded as a~process on the probability space $(\Omega,\mathcal{F}_{\infty},\mathbb{P}|_{\mathcal{F}_{\infty}})$ with $\mathcal{F}_{\infty}:=\sigma\left(\{\Psi(t):\,t\in\mathbb{T}\}\right)$. The distribution of $\Psi(0)$ is referred to as the initial one.} If $\mathbb{T}=\mathbb{N}_0$ and $P(n)=P(1)^n$ (i.e., $P(n)$ is the $n$th iteration of $P(1)$) for every $n\in\mathbb{N}$, then $\Psi$ satisfying \eqref{P_mu_cnt} is usually called a \emph{Markov chain} (rather than a~process), and the kernel $P(1)$ is then said to be the \emph{one-step transition law} of this chain. Let us also note that, letting $\mu(t)$ be the distribution of $\Psi(t)$ for every~$t\in\mathbb{T}$, we have
$\mu(s+t) = \mu(s)P(t)$ for any $s, t\in\mathbb{T}$, and thus it is indeed reasonable to call $\{P(t)\}_{t\in\mathbb{T}}$ a transition semigroup.

{
Let us also recall that a continuous-time stochastic process $\Psi=\{\Psi(t)\}_{t\in\mathbb{R}_+}$, adapted to a filtration $\{\mathcal{F}(t)\}_{t\in\mathbb{R}_+}$, is called \emph{jointly} (resp. \emph{progressively}) measurable if the map \hbox{$\mathbb{R}_+\times \Omega \ni (t,\omega)\mapsto \Psi(t)(\omega)\in E$} (resp. its restriction to $[0,t]\times \Omega$) is \hbox{$\mathcal{B}(\mathbb{R}_+)\otimes \mathcal{F}_{\infty} / \mathcal{B}(E)$}-measurable (resp. \hbox{$\mathcal{B}([0,t])\otimes \mathcal{F}(t) / \mathcal{B}(E)$}-measurable for every $t>0$). As~is well known, every adapted process with right- or left-continuous sample paths is progressively measurable, and thus also jointly measurable.}

Throughout the paper, { for a~given Markov process~$\Psi=\{\Psi(t)\}_{t\in\mathbb{T}}$}, we shall use the so-called \emph{Dynkin set-up} (see, e.g., \cite[p. 7]{sharpe}), which brings a family $\{\pr_x:\,x\in E\}$ of probability measures on~$\mathcal{F}$ such that, for every $x\in E$, one has $\pr_x(\Psi(0)=x)=1$ and \eqref{P_mu_cnt} is fulfilled with $\pr_x$ in place of~$\pr$. Obviously,~$\pr_x$ might be then thought of as the conditional distribution of $\pr$, given the initial state $x$ of $\Psi$, i.e., $\pr_x(\cdot):=\pr(\cdot\,|\,\Psi(0)=x)$. Within this framework, for each $\mu\in\mathcal{M}_1(E)$, one can define 
\begin{equation}
\label{e:pr_m}
\pr_{\mu}(F):=\int_E \pr_x(F)\,\mu(dx)\quad\text{for any}\quad F\in\mathcal{F},
\end{equation}
and check that $\Psi$ has the Markov property in the sense of \eqref{P_mu_cnt} relative to $\pr_{\mu}$, with the given transition semigroup and initial law $\mu$. For any $\mu\in\mathcal{M}_1(E)$ and $x\in E$, the expectation operators w.r.t. $\pr_{\mu}$ and $\pr_x (=\pr_{\delta_x})$ will be denoted by $\ew_{\mu}$ and $\ew_x$, respectively. It is easy to see that, given any Borel measurable, bounded below function $f:E\to\mathbb{R}$, we then have
\begin{equation}\label{eq:Ef}
P(t)f(x)=\ew_x\left(f(\Psi(t))\right)\quad\text{for all}\quad x\in E,\,t\in \mathbb{T},
\end{equation}
and thus it follows from \eqref{P_mu_cnt} that 
$$\ew_{\mu}\left(f(\Psi(s+t))\,|\,\mathcal{F}(s)\right)=P(t)f(\Psi (s))=\ew_{\Psi(s)}\left(f(\Psi(t))\right)\quad\text{for all}\quad \mu\in\mathcal{M}_1(E),\;s,t\in\mathbb{T}.\vspace{-0.05cm}$$

In practice, it will be convenient to work in the following canonical setting. Given a~Markov process $\Psi$, we put $\widetilde{\Omega}:=\{\Psi(\cdot)(\omega):\; \omega\in\Omega\}\subset E^{\mathbb{T}}$, and, for every $t\in\mathbb{T}$, we define the projection $\widetilde{\Psi}(t):\widetilde{\Omega}\to E$ by $\widetilde{\Psi}(t)(\tilde{\omega}):=\tilde{\omega}(t)$\, for any\, $\tilde{\omega}\in\widetilde{\Omega}$, as well as the $\sigma$-fields\vspace{-0.05cm}
$$\widetilde{\mathcal{F}}(t):=\sigma\left(\left\{\widetilde{\Psi}(s):\,s\in [0,t]\cap\mathbb{T}\right\}\right)\quad\text{and}\quad \mathcal{\widetilde{F}_{\infty}}:=\sigma\left(\left\{\widetilde{\Psi}(s):\,s\in\mathbb{T}\right\}\right). \vspace{-0.2cm}$$
Further, we introduce the map \hbox{$\pi:\Omega\to \widetilde{\Omega}$} given by $\pi(\omega)(t):=\Psi(t)(\omega)$ for all $\omega\in\Omega$ and~\hbox{$t\in\mathbb{T}$}. Then $\widetilde{\Psi}(t)\circ\pi=\Psi(t)$ for every $t\in\mathbb{T}$ and it follows easily that $\pi$ is both $\mathcal{F}_{\infty}/\mathcal{\widetilde{F}_{\infty}}$ and $\mathcal{F}(t)/\mathcal{\widetilde{F}}(t)$-measurable for each $t\in\mathbb{T}$. Now, let us define $\widetilde{\pr}_{\mu}(\widetilde{F}):=\pr_{\mu}(\pi^{-1}(\widetilde{F}))$ for any $\widetilde{F}\in\widetilde{\mathcal{F}}_{\infty}$ and $\mu\in\mathcal{M}_1(E)$. Then, for each $\mu\in\mathcal{M}_1(E)$,  $\widetilde{\pr}_{\mu}$ is a probability measure on $\widetilde{\mathcal{F}}_{\infty}$, and the processes $\widetilde{\Psi}$ and $\Psi$ have the same finite dimensional distributions under $\pr_{\mu}$ and $\widetilde{\pr}_{\mu}$, respectively, i.e.,
$$\widetilde{\pr}_{\mu}\left(\widetilde{\Psi}(t_0)\in A_0,\ldots,\widetilde{\Psi}(t_n)\in A_n\right)=\pr_{\mu}\left(\Psi(t_0)\in A_0,\ldots,\Psi(t_n)\in A_n\right) \vspace{-0.2cm}$$
for all $n\in\n_0$, $t_0\leq t_1\leq\ldots\leq t_n$ in $\mathbb{T}$, and $A_0,\ldots,A_n\in\mathcal{B}(E)$. In particular,  $\widetilde{\Psi}$ is then a~Markov process w.r.t. $\{\widetilde{\mathcal{F}}(t)\}_{t\in\mathbb{T}}$ with the transition semigroup such as that of $\Psi$  (cf.~the~proof of \cite[Theorem 4.3]{blumenthal}). { Moreover, it is not hard to check that, if $\Psi$ (with $\mathbb{T}=\mathbb{R}_+$) is jointly (resp.~progressively) measurable, then $\widetilde{\Psi}$ is jointly (resp. progressively) measurable as well.} One important advantage of this canonical setting is that we can consider the shift operators $\Theta_t:\widetilde{\Omega}\to \widetilde{\Omega} $, $t\in\mathbb{T}$, defined by
\begin{equation}   
\label{e:shift}
\Theta_t(\tilde{\omega})(s):=\tilde{\omega}(s+t)\quad\text{for}\quad \tilde{\omega}\in\widetilde{\Omega},\;s\in\mathbb{T},
\end{equation}
which allows one to write $\widetilde{\Psi}(s)\circ\Theta_t=\widetilde{\Psi}(s+t)$ for any $s,t\in\mathbb{T}$. This, among others, enables the use of the Birkhoff ergodic theorem in terms of the Markov process under consideration.

\subsection{A version of the {CLT} for martingales}

While proving the main result of this article, we will refer to \cite[Theorem 5.1]{kom_walczuk}, which is quoted below for the convenience of the reader.

Let $(\Omega,\{\mathcal{F}_n\}_{n\in\mathbb{N}_0},\mathcal{F},\mathbb{P})$
be a filtrated probability space with trivial $\mathcal{F}_0$, and consider 
a~square integrable martingale $\{m_n\}_{n\in\mathbb{N}_0}$, as well as 
the sequence $\{z_n\}_{n\in\mathbb{N}}$ of its increments, given by $z_n=m_n-m_{n-1}$ for $n\in\mathbb{N}$. Further, define $\langle m\rangle_n$, $n\in\mathbb{N}$, as
\begin{equation}\label{def:<m>}
\langle m\rangle_n:=\sum_{i=1}^n\mathbb{E}\left(z_i^2|\mathcal{F}_{i-1}\right).
\end{equation}

\begin{theorem}[{\cite[Theorem 5.1]{kom_walczuk}}]\label{thm:martingale}
Suppose that the following conditions hold:
\begin{enumerate}[label=\textnormal{(M\arabic*)}]
\item\label{cnd:m1}
For every $\varepsilon>0$ we have
\begin{equation*}
\lim_{n\to\infty}\frac{1}{n}\sum_{i=0}^{n-1}\mathbb{E}\left(
z_{i+1}^2 
\mathbbm{1}_{\left\{\left|z_{i+1}\right|\geq\varepsilon\sqrt{n}\right\}}\right)=0
\end{equation*}
\item\label{cnd:m2}
We have\; $\sup_{n\geq 1}\mathbb{E}\left(z_n^2\right)<\infty$
and there exists { $\sigma\in [0,\infty)$} such that
\begin{equation*}
\lim_{k\to\infty}\limsup_{l\to\infty}\frac{1}{l}\sum_{j=1}^l\mathbb{E}\left|\frac{1}{k}\mathbb{E}\left(\langle m\rangle_{jk}-\langle m\rangle_{(j-1)k}|\mathcal{F}_{(j-1)k}\right)-\sigma^2\right|=0.
\end{equation*}
\item\label{cnd:m3}
For every $\varepsilon>0$ we have
\begin{equation*}
\lim_{k\to\infty}\limsup_{l\to\infty}\frac{1}{kl}\sum_{j=1}^l\sum_{i=(j-1)k}^{jk-1}\mathbb{E}
\left(
\left(1+z_{i+1}^2\right)
\mathbbm{1}_{\left\{\left|m_i-m_{(j-1)}k\right|\geq\varepsilon\sqrt{kl}\right\}}
\right)
=0.
\end{equation*}
\end{enumerate}
Then
\begin{equation}\label{eq:lim_sigma}
\lim_{n\to\infty}\frac{\mathbb{E}\langle m\rangle_n}{n}=\sigma^2,
\end{equation}
and $\{m_n\}_{n\in\mathbb{N}_0}$ obeys the CLT, i.e.,
$$
\lim_{n\to\infty}\mathbb{P}\left(\frac{1}{\sqrt{n}}\sum_{i=0}^{n-1}m_i\leq u\right)=\Phi_{\sigma}(u),
$$
where $\Phi_{\sigma}$ is the distribution function of a centered normal law with variance $\sigma^2$.
\end{theorem}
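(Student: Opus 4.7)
The plan is to combine a blocking reduction with a classical Lindeberg-type martingale central limit theorem. Hypotheses (M1)-(M3) are precisely tailored for this two-scale argument: (M1) is a Lindeberg condition at the level of the individual increments $z_i$, (M2) says that, on blocks of length $k$, the conditional quadratic variation averages in $L^1$ to $k\sigma^2$ (in a limit iterated first in $l$ and then in $k$), and (M3) is the matching Lindeberg condition at the block level. The unconditional identity \eqref{eq:lim_sigma} would be extracted first from (M2): taking expectations inside the absolute value and using the tower property together with telescoping of the block-wise conditional variances yields $\bigl|\mathbb{E}\langle m\rangle_{lk}/(lk)-\sigma^2\bigr|\to 0$ as $l\to\infty$ for each fixed $k$; letting $k\to\infty$ and handling non-multiples of $k$ via the uniform bound $\sup_n\mathbb{E}z_n^2<\infty$ (which is part of (M2)) gives the full statement.

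For the CLT, I would freeze $k\in\mathbb{N}$ and pass to the subsampled martingale $M_l^{(k)}:=m_{lk}$ adapted to the coarsened filtration $\{\mathcal{F}_{lk}\}_{l\in\mathbb{N}_0}$, whose increments are the block sums $\xi_m^{(k)}:=m_{mk}-m_{(m-1)k}$. A direct calculation, using the martingale difference property so that cross terms in $\xi_m^{(k)\,2}$ vanish under $\mathbb{E}(\cdot|\mathcal{F}_{(m-1)k})$, shows that $\mathbb{E}(\xi_m^{(k)\,2}|\mathcal{F}_{(m-1)k})=\mathbb{E}(\langle m\rangle_{mk}-\langle m\rangle_{(m-1)k}|\mathcal{F}_{(m-1)k})$. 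Hence (M2) translates into the $L^1$-convergence of $\frac{1}{lk}\sum_{m=1}^l\mathbb{E}(\xi_m^{(k)\,2}|\mathcal{F}_{(m-1)k})$ to $\sigma^2$ in the iterated limit. Together with (M3), which supplies the Lindeberg condition at the block level, a classical martingale CLT applied to the triangular array $\{\xi_m^{(k)}/\sqrt{lk}\}_{m=1}^l$ yields $m_{lk}/\sqrt{lk}\Rightarrow\mathcal{N}(0,\sigma^2)$ in the same iterated limit.

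To remove the blocking and pass to general $n$, write $n=lk+r$ with $0\leq r<k$ and decompose $m_n/\sqrt{n}=m_{lk}/\sqrt{n}+(m_n-m_{lk})/\sqrt{n}$. Using (M1) together with the uniform second-moment bound, the tail piece $(m_n-m_{lk})/\sqrt{n}$ is negligible in probability uniformly in the relevant parameters, so the blocked convergence transfers to $m_n/\sqrt{n}$ along subsequences of the form $n=lk$. A diagonal extraction, choosing $k=k_n\to\infty$ slowly enough with $n$, then promotes this to $m_n/\sqrt{n}\Rightarrow\mathcal{N}(0,\sigma^2)$ as $n\to\infty$ without subsequential restrictions.

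The principal obstacle is the careful handling of the iterated limit $\lim_{k\to\infty}\limsup_{l\to\infty}$ appearing in (M2) and (M3): the conditional variance control and the block Lindeberg control only become sharp in the joint limit, so one cannot freeze $k$ and directly let $n\to\infty$. The standard remedy is to select a slowly growing $k_n$ such that both the martingale CLT approximation error at scale $k_n$ and the residual fluctuation across incomplete blocks vanish simultaneously; this is doable but requires a careful alignment of the quantitative rates supplied by the three hypotheses, and is where most of the technical work resides.
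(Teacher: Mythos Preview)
The paper does not prove this theorem at all: it is quoted verbatim from \cite[Theorem~4.1]{kom_walczuk} as a black-box tool, with no argument supplied. There is therefore no ``paper's own proof'' to compare your proposal against. Your outline---blocking at scale $k$, identifying $\mathbb{E}(\xi_m^{(k)\,2}\mid\mathcal{F}_{(m-1)k})$ with the conditional increment of $\langle m\rangle$, applying a Lindeberg-type martingale CLT to the block array, and then removing the blocking via a diagonal choice of $k_n$---is the standard two-scale strategy and matches, at the level of a sketch, what Komorowski and Walczuk actually do in their paper; if you want to see the details, you should consult that reference directly rather than the present one.
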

Obviously, the centered normal distribution with zero variance (i.e., $\sigma^2=0$) is viewed as the Dirac measure at $0$.

\begin{remark}
Basically, almost all known central convergence criteria for martingales rely on \cite[Theorem 2]{brown71} of Brown, which, in turn, is based on the celebrated Lindeberg condition. In~the result above, this condition is stated as hypothesis \ref{cnd:m1} and is also reflected in assumption \ref{cnd:m3}. The Lindeberg condition originates  from \hbox{\cite[Theorem 27.2]{billingsley86}} (see also~\hbox{\cite[pp. 292--294]{loeve77}}), that is, the Lindeberg-Feller CLT, which concerns independent, but not necessarily identically distributed, random variables.
The Brown result, apart from this condition, involves the assumption that
\begin{equation}\label{*}
\frac{\langle m\rangle_n}{\mathbb{E}\langle m\rangle_n}\to 1\quad \text{in probability}.
\end{equation}
This requirement is, however, relatively hard to verify in practice, and thus it is often replaced 
with other assumptions. For instance, the martingale CLT given in \hbox{\cite[Theorem D.6.4]{mt93}} requires instead that
\begin{equation}\label{**}
\lim_{n \to \infty} \frac{1}{n} \sum_{i=1}^n \mathbb{E}\left(z_i^2 | \mathcal{F}_{i-1}\right) = \sigma^2 \quad\text{a.s.}\quad\text{for some $\sigma^2 \in (0,\infty)$,}
\end{equation}
which can be simply written as $\lim_{n \to \infty} \langle m\rangle_n/n 
= \sigma^2$ a.s. This assumption, together with the Lindeberg condition, enables one to derive \eqref{eq:lim_sigma}, which then yields \eqref{*}. Hypotheses \ref{cnd:m2} and \ref{cnd:m3}, adapted here from \cite{kom_walczuk}, although more technical, prove to be less restrictive than \eqref{**}. As emphasized by the authors, \cite[Theorem 5.1]{kom_walczuk} was inspired by the proof of \cite[Theorem 2.1]{klo}, concerning martingales with stationary increments, where the only assumption is $\lim_{n \to\infty} \mathbb{E}|\langle m\rangle_n/n -\sigma^2| \to 0$. Conditions \ref{cnd:m1}-\ref{cnd:m3} therefore constitute, in some way, a substitute of this assumption and the stationarity, expressed in the spirit of Lindeberg-type conditions and the aforementioned hypothesis \eqref{**} from \cite{mt93}.
\end{remark}

\section{Assumptions and formulation of the main result}\label{sec:2}

Let $\Psi=\{\Psi(t)\}_{t\in\mathbb{R}_+}$ be a jointly measurable, $E$-valued time-homogeneous Markov process with transition semigroup $\{P(t)\}_{t\in\mathbb{R}_+}$. We assume that the process is given in the Dynkin setup with a suitable family \hbox{\{$\pr_{\mu}:\, \mu\in\mathcal{M}_1(E)\}$} of probability measures. Furthermore, for \hbox{analysis} purposes, we will identify~$\Psi$ with the canonical (and also jointly measurable) process~$\widetilde{\Psi}$, defined in Section~\ref{sec:markov_operators}, simultaneously, dropping all the tildes used in the definition of the latter.

To state the main result of this paper, we need to employ several conditions regarding the semigroup $\{P(t)\}_{t\in\mathbb{R}_+}$. Firstly, we assume that
\begin{enumerate}[label=\textnormal{(A\arabic*)}]
\item\label{(h0)} $\{P(t)\}_{t\in\mathbb{R}_+}$ has the Feller property;
\end{enumerate}
and, secondly, we require the existence of a continuous function $V:E \to \mathbb{R}_+$ such that the following holds:
\begin{enumerate}[label=\textnormal{(A\arabic*)}, start=2]
\item\label{(h1)} $\{P(t)\}_{t\in\mathbb{R}_+}$ is \emph{\hbox{$V$-exponentially} mixing in the metric $d_{\text{FM},\rho}$} in the sense that there exist constants $\gamma>0$ and $\beta>0$ such that
$$
d_{\text{FM},\rho} (\mu P(t), \nu P(t))\leq \beta(\<V,\mu\>+\<V,\nu\>+1)^{1/2}e^{-\gamma t}\quad\text{for all}\quad \mu,\nu\in\mathcal{M}_{1,1}^V(E),\;t\in\mathbb{R}_+.
$$
\item\label{(h2)} there exist $A,B\geq 0$ and $\Gamma>0$ such that
$$
P(t)V^2(x)\leq A e^{-\Gamma t}V^2(x)+B\;\;\;\text{for all}\;\;\;x\in E,\;t\in\mathbb{R}_+.
$$
\end{enumerate}

\begin{remark}\label{rem:1}
Assumption \ref{(h2)} is a strengthened form of the Lyapunov condition (see, e.g., \hbox{\cite[Definition 2.1]{cloez_hairer}}) and, among others, it ensures that the semigroup $\{P(t)\}_{t\in\mathbb{R}_+}$ leaves the set $\mathcal{M}_{1,2}^V(E)$ invariant. We can say even more, namely, for any $\mu\in \mathcal{M}_{1,2}^V(E)$ we have\, $\sup_{t\in\mathbb{R}_+} \< V^2,\mu P(t)\><\infty$, since
$$\< V^2,\mu P(t)\>=\< P(t)V^2,\mu\> 
\leq A\left\langle V^2,\mu\right\rangle +B<\infty\quad\text{for all}\quad t\in\mathbb{R}_+.$$  
In Section \ref{sec:aux_facts}, we will derive (in Lemma \ref{lem:P(t)C}) a variant of \ref{(h2)} concerning $P(t)\mathcal{C}^p$ for $p\in (0,4]$ and $\mathcal{C}=\kappa(V+1)^{1/2}$ (with some constant $\kappa$), which leads to a conclusion analogous to that above (see Corollary \ref{lem:sup_t}). These observations will be essential for proving Lemma~\ref{lem:E_x_Z(i)^p}, which, in turn, plays a key role in verifying the hypotheses of Theorem~\ref{thm:martingale} for a suitable martingale (defined in Section \ref{sec:df_mart}).
\end{remark}

We will now show that the conjunction of the above-stated conditions implies that the semigroup $\{P(t)\}_{t\in\mathbb{R}_+}$ is, in fact, \emph{$V$-exponentially ergodic in} $d_{\text{FM},\rho}$.

\begin{lemma}\label{lem:inv_exist}
If conditions \ref{(h0)}-\ref{(h2)} hold with some continuous function $V:E \to \mathbb{R}_+$, then $\{P(t)\}_{t\in\mathbb{R}_+}$ possesses a unique invariant probability measure $\mu_*$, { and $\mu_*\in\mathcal{M}_{1,2}^V(E)$}. Moreover, if such a measure exists, condition \ref{(h1)} is equivalent to the following one: there exist $\gamma>0$ and $\kappa>0$ such that
\begin{equation}\label{dc0}
d_{\text{FM},\rho} (\mu P(t), \mu_*)\leq \kappa(\<V,\mu\>+1)^{1/2}e^{-\gamma t}\quad\text{for every}\quad \mu\in\mathcal{M}_{1,1}^V(E).
\end{equation}
\end{lemma}
\begin{proof} Let $x_0\in E$. Then from hypotheses \ref{(h1)}, \ref{(h2)} and the inequality $PV\leq (PV^2)^{1/2}$ it follows that, for any $s,t\geq 0$,
\begin{equation}\label{e:dc1}
d_{\text{FM},\rho} \left(\delta_{x_0} P(t), \delta_{x_0} P(t+s)\right)\leq \beta\left(V(x_0)+P(s)V(x_0)+1\right)^{1/2}e^{-\gamma t}\leq  W\left(x_0\right)e^{-\gamma t}, 
\end{equation}
where $W(x_0):=\beta(V(x_0)+(AV^2(x_0)+B)^{1/2}+1)^{1/2}$. Hence, for every $t\geq 0$ and any $n,m\in\mathbb{N}$, we have
$$d_{\text{FM},\rho} \left(\delta_{x_0} P(t+n), \delta_{x_0} P(t+n+m)\right)\leq W\left(x_0\right)e^{-\gamma (t+n)}.$$
This shows that $\{\delta_{x_0}P(t+n)\}_{n\in\n}$ is a Cauchy sequence w.r.t. $d_{\text{FM},\rho}$ for every $t\geq 0$. Consequently, since the space $(\mathcal{M}_1(E), d_{\text{FM},\rho})$ is complete, each of such sequences is convergent in this space. On the other hand, \eqref{e:dc1} also implies that
$$d_{\text{FM},\rho} \left(\delta_{x_0} P(s+n), \delta_{x_0} P(t+n)\right)\leq W(x_0)e^{-\gamma ((s\wedge t)+n)} \quad \text{for any}\quad s,t\geq 0,\; n\in\n,$$
which, in turn, guarantees that, all the sequences $\{\delta_{x_0}P(t+n)\}_{n\in\n}$, $t\geq 0$, have the same limit, say $\mu_*\in\mathcal{M}_1(E)$. Obviously, this is equivalent to that $\delta_{x_0}P(t+n)\stackrel{w}{\to} \mu_*$ for all $t\geq 0$. 

Now, using \ref{(h0)} we can conclude that $\mu_*$ is invariant for $\{P(t)\}_{t\in\mathbb{R}_+}$. Indeed, for any $t\geq 0$ and $f\in \operatorname{C}_b(E)$, we get
$$
\left\langle f,\mu_* P(t)\right\rangle=\left\langle P(t)f, \mu_*\right\rangle=\lim_{n\to\infty} \left\langle P(t)f,\, \delta_{x_0}P(n)\right\rangle
=\lim_{n\to\infty} \left\langle f, \delta_{x_0}P(t+n)\right\rangle=\left\langle f,\mu_*\right\rangle,
$$
which proves that $\mu_* P(t)=\mu_*$ for all $t\geq 0$. Moreover, \ref{(h1)} yields that $\delta_x P(t)\stackrel{w}{\to} \mu_*$, as $t\to\infty$, for every $x\in E$, and thus, using Lebesgue's dominated convergence theorem, one can deduce that, in fact, $\mu P(t)\stackrel{w}{\to} \mu_*$ for every $\mu\in\mathcal{M}_1(X)$. This implies that $\mu_*$ has to be the unique invariant probability measure of $\{P(t)\}_{t\in\mathbb{R}_+}$.

Further, observe that $\mu_*\in \mathcal{M}_{1,2}^V(E)$. To see this, let $V_k:=\min\{V^2,k\}$ for $k\in\n$. Since $\delta_{x_0}P(t)\stackrel{w}{\to}\mu_*$ as $t\to\infty$ and $\{V_k\}_{k\in\n}\subset \operatorname{C}_b(E)$, applying \ref{(h2)}, we get
$$\<V_k, \mu_*\>=\lim_{t\to\infty}\<V_k, \delta_{x_0} P(t)\>=\lim_{t\to\infty} P(t)V_k(x_0)\leq B\quad\text{for all}\quad k\in\n.$$
Obviously $V_k(x)\uparrow V^2(x)$ for any $x\in E$. Hence, we can use the Lebesgue monotone convergence theorem to conclude that $\<V^2, \mu_*\>=\lim_{k\to\infty} \<V_k, \mu_*\>\leq B<\infty$, which is the desired claim.

To prove the second statement of the lemma, let $\mu_*\in\mathcal{M}_{1,2}^V(E)$ be an invariant measure of $\{P(t)\}_{t\in\mathbb{R}_+}$. Then \ref{(h1)} implies that, for every $\mu\in\mathcal{M}_{1,1}^V(E)$,
\begin{align*}
d_{\text{FM},\rho} (\mu P(t), \mu_*)&=d_{\text{FM},\rho} (\mu P(t), \mu_* P(t))\leq \beta(\<V,\mu_*\>+\<V,\mu\>+1)^{1/2}e^{-\gamma t}\\
&\leq \beta(\<V,\mu_*\>+1)^{1/2}(\<V,\mu\>+1)^{1/2}e^{-\gamma t},
\end{align*}
whence \eqref{dc0} holds with $\kappa:=\beta(\<V,\mu_*\>+1)^{1/2}.$
Conversely, if \eqref{dc0} is fulfilled, then
\begin{align*}
d_{\text{FM},\rho}& (\mu P(t), \nu P(t))\leq d_{\text{FM},\rho} (\mu P(t), \mu_*)+d_{\text{FM},\rho} (\mu_*, \nu P(t))\\
&\leq \kappa \left((\<V,\mu\>+1)^{1/2}+(\<V,\nu\>+1)^{1/2}\right)e^{-\gamma t}
\leq 2\kappa (\<V,\mu\>+\<V,\nu\>+1)^{1/2}e^{-\gamma t},
\end{align*}
for any $\mu,\nu\in\mathcal{M}_{1,1}^V(E)$, which means that \ref{(h1)} holds with $\beta:=2\kappa$.
\end{proof}

Throughout the rest of the paper, upon assuming \ref{(h0)}-\ref{(h2)},  the unique invariant probability measure of  $\{P(t)\}_{t\in\mathbb{R}_+}$ (which belongs to $\mathcal{M}_{1,2}^V(E)$) will be denoted by $\mu_*$. Moreover, we define
\begin{equation}\label{eq:bar_C}
\mathcal{C}(x):=\kappa(V(x)+1)^{1/2} \quad\text{for}\quad x\in E,
\end{equation}
where $\kappa$ is the constant featured in \eqref{dc0}. Then, Lemma \ref{lem:inv_exist} yields that
\begin{equation}
\label{e:erg_dfm}
d_{\text{FM},\rho} (\delta_x P(t), \mu_*)\leq \mathcal{C}(x)e^{-\gamma t}\quad\text{for every}\quad x\in E.
\end{equation}

The main result of this paper reads as follows:
\begin{theorem}\label{thm:main}
Suppose that the transition semigroup $\{P(t)\}_{t\in\mathbb{R}_+}$ of $\Psi$ satisfies \hbox{hypotheses} \ref{(h0)}-\ref{(h2)} with some continuous function $V:E\to\mathbb{R}_+$. {Then it possesses a unique invariant probability measure $\mu_*$, which belongs to $\mathcal{M}_{1,2}^V(E)$, and, for every $g\in \operatorname{Lip}_b(E)$}, the CLT holds for the process $\{\bar{g}(\Psi(t))\}_{t\in\mathbb{R}_+}$ with \hbox{$\bar{g}:=g-\langle g,\mu_*\rangle$}, {independently of the initial distribution $\mu\in\mathcal{M}_1(E)$ of $\Psi$}, that is,  
\begin{equation}
\label{e:main}
\lim_{t\to\infty}\mathbb{P}_{\mu}\left(\frac{1}{\sqrt{t}}\int_0^t\bar{g}\left(\Psi(s)\right)\,ds\leq u\right)=\Phi_{\sigma}(u)\;\;\;\text{for all}\;\;\;u\in \mathbb{R},
\end{equation}
where $\Phi_{\sigma}$ is the distribution function of a centered normal law with the variance $\sigma^2<\infty$ of the form
\begin{equation}\label{def:sigma}
\sigma^2=\mathbb{E}_{\mu_*}
\left(
\left(
\int_0^{\infty}P(t)\bar{g}(\Psi(1))\,dt-\int_0^{\infty}P(t)\bar{g}(\Psi(0))\,dt+\int_0^1\bar{g}(\Psi(s))\,ds
\right)^2
\right).
\end{equation}

\end{theorem}

\begin{remark}
In fact, our main result remains valid (with almost the same proof, except for some obvious minor changes) under slightly weaker conditions than~\ref{(h1)} and \ref{(h2)}.

Firstly, the exponent $1/2$ on the right-hand of the inequality in \ref{(h1)} can be replaced by any other number $\delta\in (0,1)$. Then \eqref{dc0} holds with $\delta$ in place of $1/2$,  and thus one may consider $\mathcal{C}:=\kappa(V+1)^{\delta}$ instead of \eqref{eq:bar_C}. Upcoming Lemma \ref{lem:P(t)C}, Corollary \ref{lem:sup_t} and Lemma~\ref{lem:E_x_Z(i)^p} can be then established for $p\in (0, 2\delta^{-1}]$ (rather that $p\in (0, 4]$), which is sufficient to prove the main result.

Secondly, \ref{(h2)} can be somewhat relaxed by assuming instead that
\begin{equation}\label{e:a3_gen}
P(t)V^2(x)\leq A e^{-\Gamma t}V^r(x)+B\;\;\;\text{for all}\;\;\;x\in E,\;t\in\mathbb{R}_+
\end{equation}
{ with some $r\geq 2$ (instead of $r=2$). Clearly,} for $r\in(0,2)$, condition \eqref{e:a3_gen} implies \ref{(h2)} (with $2A$ and $2A+B$ in places of $A$ and  $B$, respectively). Moreover, considering \eqref{e:a3_gen} with $t=0$ shows that $V$ has to be bounded in this case. Hence, if $r<2$, then every probability measure has finite all moments w.r.t. $V$, and thus the assertion of Corollary~\ref{lem:sup_t} is trivially satisfied. Nevertheless, since $V$ usually occurs in the form \hbox{$V=\rho(\cdot,x_*)$} (with an~arbitrary $x_*\in E$), such a case is essentially tantamount to assuming the boundedness of the state space, which is a very restrictive requirement.

Regardless of the above, we will stick with the initial version of the assumptions, with $\delta=1/2$ and $r=2$, to make the proofs easier to follow.
\end{remark}

\section{Proof of the main result}\label{sec:3}

\subsection{Some auxiliary facts}\label{sec:aux_facts}

First of all, let us note that, due to the boundedness of $g$, it suffices to prove the CLT for $\{\bar{g}(\Psi(t))\}_{t\in\mathbb{R}_+}$ along the integers.

\begin{lemma}\label{lem:suffices}
Let $\mu\in\mathcal{M}_1(E)$. Then, for any $g\in \operatorname{B}_b(E)$ and
$$I(t):=\int_0^t g(\Psi(s))\,ds,\quad t\geq 0,$$
the process $\{I(t)/\sqrt{t}\}_{t>0}$ converges in law (to some $\mathbb{R}$-valued random variable) under~$\pr_{\mu}$ \hbox{(as $t\to \infty$)} whenever the sequence $\{I(n)/\sqrt{n}\}_{n\in\n}$ does. 
\end{lemma}
\begin{proof}
Fix an arbitrary $\varepsilon>0$ and choose $n_0\in \n$ such that $n_0^{-1/2}<\varepsilon/(2\|g\|_{\infty})$. Then, taking into account the boundedness of $g$, for any $n\geq n_0$ and any $t\in[n,n+1)$, we get
\begin{align*}
\left|\frac{I(t)}{\sqrt{t}}-\frac{I(n)}{\sqrt{n}}\right|
&\leq \frac{\left|I(t)-I(n)\right|}{\sqrt{n}}+\left|I(n)\right|\left(\frac{1}{\sqrt{n}}-\frac{1}{\sqrt{t}}\right)\leq \frac{\|g\|_{\infty}}{\sqrt{n}}+\left|I(n)\right|\left(\frac{1}{\sqrt{n}}-\frac{1}{\sqrt{n+1}}\right)\\
&<\frac{\varepsilon}{2}+\left|I(n)\right|\left(\frac{1}{\sqrt{n}}-\frac{1}{\sqrt{n+1}}\right),
\end{align*}
which yields that
\begin{align*}
\mathbb{P}_{\mu}\left(\sup_{t\in[n,n+1)}\left|\frac{I(t)}{\sqrt{t}}-\frac{I(n)}{\sqrt{n}}\right|\geq \varepsilon\right)&\leq \mathbb{P}_{\mu}\left(\left|I(n)\right|\left(\frac{1}{\sqrt{n}}-\frac{1}{\sqrt{n+1}}\right)\geq \frac{\varepsilon}{2}\right)\\
&=\mathbb{P}_{\mu}\left(\left|I(n)\right|\geq \frac{\varepsilon}{2\left(n^{-1/2}-(n+1)^{-1/2}\right)}\right).
\end{align*}
Finally, using the Chebyshev inequality and the fact that \hbox{$n^{-1/2}-(n+1)^{-1/2}\leq n^{-3/2}$}, we infer that, for very $n\geq n_0$,
$$
\mathbb{P}_{\mu}\left(\sup_{t\in[n,n+1)}\left|\frac{I(t)}{\sqrt{t}}-\frac{I(n)}{\sqrt{n}}\right|\geq \varepsilon\right)
\leq\frac{4\mathbb{E}_{\mu}\left(I^2(n)\right)}{n^3\varepsilon^2}\leq \frac{4(\|g\|_{\infty}n)^2}{n^3\varepsilon^2}=\frac{4\|g\|_{\infty}^2}{n\varepsilon^2}.
$$
Consequently, 
\begin{equation*}
\lim_{n\to\infty}\mathbb{P}_{\mu}\left(\sup_{t\in[n,n+1)}\left|\frac{I(t)}{\sqrt{t}}-\frac{I(n)}{\sqrt{n}}\right|\geq \varepsilon\right)=0,
\end{equation*}
which implies the desired claim.
\end{proof}

{
Additionally, as is evident from the following remark, we may assume without loss of generality that the initial distribution $\mu$ of $\Psi$ belongs to $\mathcal{M}_{1,2}^V(E)$.

\begin{remark}\label{rem:m1}
If \eqref{e:main} holds for all $\mu\in\{\delta_x:\,x\in E\}$, then it is valid for all $\mu\in\mathcal{M}_1(E)$. This follows directly from \eqref{e:pr_m} by applying the Lebesgue dominated convergence theorem.
\end{remark}
}

Another simple observation, to be used later, is expressing the Lyapunov condition assumed in \ref{(h2)} using the function $\mathcal{C}:E\to\mathbb{R}_+$, given by \eqref{eq:bar_C}.
\begin{lemma}\label{lem:P(t)C}
If $\{P(t)\}_{t\in\mathbb{R}_+}$ enjoys hypothesis \ref{(h2)} with some Borel measurable function $V:E\to\mathbb{R}_+$, then, for any $p\in(0,4]$, there exist constants $A_{p},B_{p}\geq 0$ and $\Gamma_{p}>0$ such that the map $\mathcal{C}:E\to\mathbb{R}_+$ given by \eqref{eq:bar_C} satisfies
$$
P(t)\mathcal{C}^p\left(x\right)\leq A_pe^{-\Gamma_p t}V^{p/2}(x)+B_p\;\;\;\text{for any}\;\;\;x\in E\;\;\;\text{and any}\;\;\;t\in\mathbb{R}_+.
$$
\end{lemma}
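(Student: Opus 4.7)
The plan is to reduce everything to hypothesis \ref{(h2)} via two uses of subadditivity of $t\mapsto t^s$ for $s\in(0,1]$ together with one application of Jensen's inequality. First, since $\bar{C}^p(x)=\varkappa^p(V(x)+1)^{p/2}$, I would get rid of the $+1$ inside the power by picking a constant $c_p:=\max\{1,2^{p/2-1}\}$ so that
\[
(V(x)+1)^{p/2}\leq c_p\bigl(V^{p/2}(x)+1\bigr) \quad\text{for all } x\in E,
\]
which covers both the case $p\leq 2$ (subadditivity of $t\mapsto t^{p/2}$) and the case $2<p\leq 4$ (convexity of $t\mapsto t^{p/2}$). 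Applying $P(t)$, which is monotone and preserves the constant $1$, this gives
\[
P(t)\bar{C}^p(x)\leq \varkappa^p c_p\bigl(P(t)V^{p/2}(x)+1\bigr).
\]

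The next step is to pass from the exponent $p/2$ to the exponent $2$ permitted by \ref{(h2)}. Since $p\leq 4$, the exponent $p/4$ lies in $(0,1]$, so the function $t\mapsto t^{p/4}$ is concave on $\mathbb{R}_+$. Using Jensen's inequality for the probability measure $P(t)(x,\cdot)$ (equivalently, for the conditional expectation $\mathbb{E}_x(\cdot)$ as in \eqref{eq:Ef}), I obtain
\[
P(t)V^{p/2}(x)=\mathbb{E}_x\bigl((V^2(\Psi(t)))^{p/4}\bigr)\leq \bigl(P(t)V^2(x)\bigr)^{p/4}.
\]
Now hypothesis \ref{(h2)} yields $P(t)V^2(x)\leq Ae^{-\Gamma t}V^2(x)+B$, and since $p/4\leq 1$, subadditivity of $t\mapsto t^{p/4}$ gives
\[
\bigl(Ae^{-\Gamma t}V^2(x)+B\bigr)^{p/4}\leq A^{p/4}e^{-(\Gamma p/4) t}V^{p/2}(x)+B^{p/4}.
\]

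Chaining the three bounds together, one reads off admissible constants
\[
A_p:=\varkappa^p c_p A^{p/4},\qquad \Gamma_p:=\frac{\Gamma p}{4}>0,\qquad B_p:=\varkappa^p c_p\bigl(B^{p/4}+1\bigr),
\]
yielding the stated inequality $P(t)\bar{C}^p(x)\leq A_p e^{-\Gamma_p t}V^{p/2}(x)+B_p$ for all $x\in E$ and $t\in\mathbb{R}_+$. I do not expect any serious obstacle: the only thing to keep an eye on is that the restriction $p\leq 4$ is used precisely to keep $p/4\leq 1$ so that the Jensen step and the final subadditivity step both go through without introducing a power of $V^2$ larger than what \ref{(h2)} controls; if $p>4$, the analogous computation would require a bound on $P(t)V^{p/2}$ which is not assumed.
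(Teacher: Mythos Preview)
Your proposal is correct and follows essentially the same route as the paper's proof: the paper also splits $(V+1)^{p/2}\leq \zeta(V^{p/2}+1)$ via the elementary inequality \eqref{simple_fact} (your $c_p$ is exactly their $\zeta$), then bounds $\langle V^{p/2},\delta_x P(t)\rangle\leq (P(t)V^2(x))^{p/4}$ (they phrase it as H\"older with exponent $4/p$, you as Jensen for the concave map $t\mapsto t^{p/4}$---these are the same estimate), and finishes with subadditivity of $t\mapsto t^{p/4}$ applied to $Ae^{-\Gamma t}V^2(x)+B$. The resulting constants $A_p,\Gamma_p,B_p$ match.
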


\begin{proof}
Let $x\in E$, $t\in\mathbb{R}_+$. Using the fact that, for any $r>0$, there exists a positive constant~$\zeta$ (precisely, $\zeta=1$ for $r\leq 1$ or $\zeta=2^{r-1}$ for $r>1$) such that 
\begin{equation}\label{simple_fact}
|a_1+a_2|^{r}\leq \zeta(|a_1|^{r}+|a_2|^{r})\;\;\;\text{for any}\;\;\;a_1,a_2\in\mathbb{R},
\end{equation}
we obtain $\mathcal{C}^p=\kappa^p(V+1)^{p/2}\leq \kappa^p \zeta (V^{p/2}+1)$ with some $\zeta>0$. Further,  the H\"older inequality (used with exponent $4/p$) yields that $P(t)V^{p/2}(x)\leq (P(t)V^2(x))^{p/4}$. Consequently, applying hypothesis \ref{(h2)} and again inequality \eqref{simple_fact} (with $r=p/4\in(0,1]$), we can conclude that 
\begin{align*}
P(t)\mathcal{C}^p\left(x\right)
&\leq \kappa^p\zeta\left(\left(P(t)V^2(x)\right)^{p/4}+1\right)\leq \kappa^p\zeta\left(\left(Ae^{-\Gamma t}V^{2}(x)+B\right)^{p/4}+1\right)\\
&\leq \kappa^p\zeta\left( A^{p/4}e^{-\frac{p\Gamma}{4} t}V^{p/2}(x)+B^{p/4}+1\right)=A_pe^{-\Gamma_p t}V^{p/2}(x)+B_p,
\end{align*}
where $A_p:=\kappa^p\zeta A^{p/4}$, $B_p:=\kappa^p\zeta(B^{p/4}+1)$ and $\Gamma_p:=(p\Gamma)/4$. The proof is now complete.
\end{proof}

As a straightforward consequence of Lemma \ref{lem:P(t)C}, we can deduce that, for any initial measure $\nu$ with finite second moment w.r.t. $V$, the fourth moment of the distribution of $\Psi$ w.r.t. $\mathcal{C}$,  is uniformly bounded over $t$.
\begin{corollary}\label{lem:sup_t}
Suppose that $\{P(t)\}_{t\in\mathbb{R}_+}$ fulfills hypotheses \ref{(h0)}-\ref{(h2)} with some continuous $V:E\to\mathbb{R}_+$. Then, for every $p\in(0,4]$ and the function $\mathcal{C}:E\to\mathbb{R}_+$ given by \eqref{eq:bar_C}, we have
$$
\sup_{t\in\mathbb{R}_+}
\left\langle
\mathcal{C}^p,\mu P(t)\right\rangle<\infty
\;\;\;\text{whenever}\;\;\; \mu\in\mathcal{M}_{1,2}^V(E), \quad\text{and}\quad  \<\mathcal{C}^p,\mu_*\><\infty.
$$
\end{corollary}

\begin{proof}
Let $p\in (0,4]$. Taking constants $A_p,B_p\geq 0$ and $\Gamma_p>0$ for which the assertion of Lemma \ref{lem:P(t)C} is valid, we get
\begin{equation}\label{eq:estimated}
\left\langle \mathcal{C}^p,\mu P(t)\right\rangle
=\left\langle P(t)\mathcal{C}^p,\mu \right\rangle
\leq A_p \left\langle V^{p/2},\mu\right\rangle+B_p<\infty
\end{equation}
for any $t\geq 0$ and $\mu\in\mathcal{M}_{1,2}^V(E)$. Since $\mu_*\in\mathcal{M}_{1,2}^V(E)$ due to Lemma \ref{lem:inv_exist}, applying the same estimation with $\mu=\mu_*$ also gives $\<\mathcal{C}^p,\mu_*\><\infty$.
\end{proof}

\subsection{A martingale-based decomposition for the given Markov process}\label{sec:df_mart}

Suppose that the transiton semigroup $\{P(t)\}_{t\in\mathbb{R}_+}$ of $\Psi$ fulfills hypotheses \ref{(h0)}-\ref{(h2)} with some continuous function $V:E\to\mathbb{R}_+$ (and therefore it is $V$-exponentially ergodic by Lemma \ref{lem:inv_exist}). Further, fix an arbitrary $g\in \operatorname{Lip}_{b,1}(E)$, and let $\bar{g}:=g-\langle g,\mu_*\rangle$. Then, by \eqref{e:erg_dfm}, we have
\begin{align}\label{eq:P(t)g_bound}
\begin{split}
|P(t)\bar{g}(x)|&=|\left\langle \bar{g},\delta_xP(t)\right\rangle|
=|\left\langle {g},\delta_xP(t)\right\rangle 
-\left\langle {g},\mu_*\right\rangle|\\
&\leq \|{g}\|_{\text{BL}}d_{\text{FM},\rho}\left(\delta_xP(t),\mu_*\right)
\leq \|{g}\|_{\text{BL}}\mathcal{C}(x)e^{-\gamma t} 
\end{split}
\end{align}
for every $x\in E$, $t\geq 0$ and some $\gamma>0$, 
whence 
\begin{equation}
\label{eq:new}
\int_0^{\infty}\left|P(t)\bar{g}(x)\right|dt
\leq \|{g}\|_{\text{BL}}\mathcal{C}(x)\int_0^{\infty}e^{-\gamma t}dt
=\frac{\|{g}\|_{\text{BL}}}{\gamma}\mathcal{C}(x)
\;\;\;\text{for any}\;\;\;x\in E.
\end{equation}
This, in turn, allows us to define the \emph{corrector function} $\chi:E\to\mathbb{R}$ as
\begin{equation}\label{def:chi}
\chi(x):=\int_0^{\infty}P(t)\bar{g}(x)dt\;\;\;\text{for any}\;\;\;x\in E.
\end{equation}

\begin{remark}  
\label{rem:cor_cont}
The function $\chi$ is continuous. Indeed, let $x_0\in E$. From \eqref{eq:P(t)g_bound} and the continuity of $\mathcal{C}$ it follows that there exists $\delta>0$ such that, for all $t\geq 0$ and any $x\in E$ with $\rho(x_0,x)<\delta$, we have
$$
|P(t)\bar{g}(x)|
\leq \|g\|_{\text{BL}}\mathcal{C}(x) e^{-\gamma t}\leq \|g\|_{\text{BL}}\left(\mathcal{C}\left(x_0\right)+1\right) e^{-\gamma t}.
$$
This, in turn, allows one to apply the Lebesgue dominated convergence theorem, which, together with \ref{(h0)}, gives \,$\lim_{x\to x_0} \chi(x)=\chi(x_0)$.
\end{remark}

\begin{remark} \label{rem:2}
Note that \eqref{eq:new} implies that, given $p>0$ {and $\mu\in\mathcal{M}_1(E)$}, we have
$$
\ew_{\mu}\left(| \chi(\Psi(t))|^p\right)
\leq \int_{E}\left(\int_0^{\infty}|P(s)\bar{g}(x)|\,ds\right)^p(\mu P(t))(dx)
\leq \left(\frac{\|{g}\|_{\text{BL}}}{\gamma}\right)^p\<\mathcal{C}^p,\, \mu P(t)\>
$$
for all $t\geq 0$.
\end{remark}

\begin{remark}\label{rem:fubini}
{It is also worth paying attention} to certain consequences of Fubini's theorem:\begin{itemize}
\item[(i)] Since the map $\mathbb{R}_+\times \Omega \ni (t,\omega)\mapsto \Psi(t)(\omega)$ is \hbox{$\mathcal{B}(\mathbb{R}_+)\otimes\mathcal{F}_{\infty}/\mathcal{B}(E)$}-measurable, it follows that, for any $\mu\in\mathcal{M}_1(E)$ and any $T>0$, we have 
$$\ew_{\mu}\left(\int_0^T \bar{g}(\Psi(t))\,dt \right)=\int_0^T \ew_{\mu}\left(\bar{g}(\Psi(t)) \right)\,dt.$$
\item[(ii)]The assumed product measurability of the map $\mathbb{R}_+\times E\ni(u,y)\mapsto P(u)\mathbbm{1}_A(y)$ with any \hbox{$A\in\mathcal{B}(E)$} (see Section \ref{sec:markov_operators}) easily implies the measurability of $(u,y)\mapsto P(u)\bar{g}(y)$, which, in turn, ensures that $\mathbb{R}_+\times \Omega \ni (u,\omega)\mapsto P(u)\bar{g}(\Psi(t)(\omega))$ is $\mathcal{B}(\mathbb{R}_+)\otimes \mathcal{F}(t)/\mathcal{B}(\mathbb{R})$-measurable for any $t\geq 0$. Moreover, it follows from \eqref{eq:P(t)g_bound} and Lemma \ref{lem:P(t)C} that the latter is also integrable with respect to $du\otimes\pr_x(d\omega)$ for every $x\in E$. Hence, using \eqref{eq:Ef}, for any $x\in E$ and $t\geq 0$, we have
\begin{align*}
P(t)\chi(x)&=\ew_x\left(\chi(\Psi(t))\right)=\ew_x\left(\int_0^{\infty}P(u)\bar{g}(\Psi(t))\,du \right)\\
&=\int_0^{\infty} \ew_{x}\left( P(u)\bar{g}(\Psi(t))\right)du=\int_0^{\infty} P(t+u)\bar{g}(x)du.
\end{align*}
\end{itemize} 
\end{remark}

Let us now define the processes
\begin{gather}
\label{def:M}
M(t):=\chi(\Psi(t))-\chi(\Psi(0))+\int_0^t\bar{g}(\Psi(s))ds\quad\text{for}\quad t\geq 0,\\
\label{def:R}
R(t):=\frac{1}{\sqrt{t}}\left(\chi(\Psi(0))-\chi(\Psi(t))\right)\quad\text{for}\quad t>0,
\end{gather}
and observe that
\begin{equation}\label{eq:martingale+rest}
\frac{1}{\sqrt{t}}\int_0^t\bar{g}\left(\Psi(s)\right)ds=\frac{M(t)}{\sqrt{t}}+R(t)\quad\text{for all}\quad t>0.
\end{equation}
Moreover, let $\{Z(n)\}_{n\in\mathbb{N}_0}$ stand for the sequence of the increments of $\{M(n)\}_{n\in\mathbb{N}_0}$, that is,
\begin{gather}\label{def:Z}
\begin{aligned}
Z(0)&:=M(0)=0,\\
Z(n)&:=M(n)-M(n-1)=\chi\left(\Psi(n)\right)
-\chi\left(\Psi(n-1)\right)
+\int_{n-1}^n\bar{g}\left(\Psi(s)\right)ds,\;\;n\in\mathbb{N}.
\end{aligned}
\end{gather}
It is worth noting here that $\sigma^2$, defined by \eqref{def:sigma}, can be now expressed as
\begin{equation}\label{eq:sigma}
\sigma^2=\ew_{\mu_*}\left(M^2(1)\right)= \ew_{\mu_*}\left(Z^2(1)\right).
\end{equation}

It is well-known that $\{M(t)\}_{t\in \mathbb{R}_+}$ is a martingale; see, e.g.,  \hbox{\cite[Proposition 5.2]{kom_walczuk}}. Nevertheless, for the convenience of a~reader, we provide a somewhat more detailed proof of this fact below, demonstrating the finiteness of $\sigma^2$ at the same time.

\begin{lemma}\label{lem:martingale}
Suppose that $\{P(t)\}_{t\in\mathbb{R}_+}$ satisfies hypotheses \ref{(h0)}-\ref{(h2)} with some continuous function $V:E\to\mathbb{R}_+$.  Then, for every $\mu\in\mathcal{M}_{1,2}^V(E)$, the process $\{M(t)\}_{t\in\mathbb{R}_+}$, given by \eqref{def:M}, is a~martingale in $\mathcal{L}^4(\pr_{\mu})$ w.r.t. to the natural filtration $\{\mathcal{F}(t)\}_{t\in\mathbb{R}_+}$ of $\Psi$. In particular, the variance $\sigma^2$, specified by \eqref{eq:sigma}, is then finite. 
\end{lemma}
\begin{proof}
Let $\mu\in\mathcal{M}_{1,2}^V(E)$ and $t\geq 0$. Using twice inequality \eqref{simple_fact} with $r=4$ and $\zeta=2^3$, we have
$$|M(t)|^4\leq \zeta^2\left(|\chi(\Psi(t))|^4+|\chi(\Psi(0))|^4\right)+\zeta(\left\|\bar{g}\right\|_{\infty}t)^4.$$
Hence, according to Remark \ref{rem:2}, it follows that
\begin{align}\label{eq:E_mu_M(t)}
\begin{split}
\mathbb{E}_{\mu}\left(|M(t)|^4\right)
&\leq \zeta^2\left(\ew_{\mu}\left(|\chi(\Psi(t))|^4\right)+\ew_{\mu}\left(|\chi(\Psi(0))|^4\right)\right)+\zeta(\left\|\bar{g}\right\|_{\infty}t)^4\\
&\leq \left(\frac{\zeta^{1/2}\|{g}\|_{\text{BL}}}{\gamma}\right)^4\left(
\left\langle\mathcal{C}^4,\mu P(t)\right\rangle
+\left\langle\mathcal{C}^4,\mu\right\rangle\right)+\zeta(\left\|\bar{g}\right\|_{\infty}t)^4,
\end{split}
\end{align}
which shows that $\mathbb{E}_{\mu}\left(|M(t)|^4\right)<\infty$ due to Corollary \ref{lem:sup_t} (applied for $p=4$). Clearly, since \hbox{$\mu_*\in \mathcal{M}_{1,2}^V(E)$} (according to Lemma \ref{lem:inv_exist}), this also yields the finiteness of $\sigma^2$. 

Now, let $s,t\in\mathbb{R}_+$ be such that $s<t$. Keeping in mind Remark \ref{rem:fubini}(i), we can write\vspace{-0.12cm}
\begin{align}\label{eq:mart_new}
\begin{split}
\mathbb{E}_{\mu}\left(M(t)|\mathcal{F}(s)\right)
=&
\mathbb{E}_{\mu}\left(\chi(\Psi(t))|\mathcal{F}(s)\right)
-\chi(\Psi(0))
+ \int_0^s \mathbb{E}_{\mu}\left(\bar{g}(\Psi(u))\,|\,\mathcal{F}(s)\right)du\\
&+ \int_s^t \mathbb{E}_{\mu}\left(\bar{g}(\Psi(u))\,|\,\mathcal{F}(s)\right)du\\
= &
\mathbb{E}_{\mu}\left(\chi(\Psi(t))|\mathcal{F}(s)\right)
-\chi(\Psi(0))
+ \int_0^s\bar{g}(\Psi(u))du\\
&+ \int_s^{\infty} \mathbb{E}_{\mu}\left( \bar{g}(\Psi(u))\,|\,\mathcal{F}(s)\right)du- \int_t^{\infty}\mathbb{E}_{\mu}\left(\bar{g}(\Psi(u))\,|\,\mathcal{F}(s)\right)du.
\end{split}
\end{align}
Then, applying the Markov property and \eqref{eq:Ef}, we see that\vspace{-0.1cm}
$$\mathbb{E}_{\mu}\left(\chi(\Psi(t))|\mathcal{F}(s)\right)
=\mathbb{E}_{\Psi(s)}\left(\chi(\Psi(t-s))\right)=P(t-s)\chi(\Psi(s)),$$
$$\int_s^{\infty} \mathbb{E}_{\mu}\left( \bar{g}(\Psi(u))\,|\,\mathcal{F}(s)\right)du=\int_0^{\infty}\mathbb{E}_{\Psi(s)}\left(\bar{g}(\Psi(u))\right)du=\int_0^{\infty}P(u)\bar{g}(\Psi(s))du=\chi(\Psi(s)),\vspace{-0.22cm}$$
\begin{align*}
\int_t^{\infty}\mathbb{E}_{\mu}\left(\bar{g}(\Psi(u))\,|\,\mathcal{F}(s)\right)du&=\int_{t-s}^{\infty}\mathbb{E}_{\Psi(s)}\left(\bar{g}(\Psi(u))\right)du=\int_{0}^{\infty}P(u+t-s)\bar{g}(\Psi(s))du\\
&=P(t-s)\chi(\Psi(s)),
\end{align*}
where the last equality follows from Remark \ref{rem:fubini}(ii). Consequently, returning to \eqref{eq:mart_new}, we finally infer that $\mathbb{E}_{\mu}\left(M(t)|\mathcal{F}(s)\right)=M(s)$, which completes the proof.
\end{proof}

Obviously, identity \eqref{eq:martingale+rest}, combined with Lemma \ref{lem:suffices}, reduces the proof of our main result to showing the CLT for the martingale $\{M(n)\}_{n\in\n_0}$, provided that $\{R(n)\}_{n\in\n}$ converges in law to $0$. This, in turn, follows from the following observation:
\begin{lemma}\label{lem:rest}
Suppose that $\{P(t)\}_{t\in\mathbb{R}_+}$  hypotheses \ref{(h0)}-\ref{(h2)} with some continuous function \hbox{$V:E\to\mathbb{R}_+$}. Then, for $\{R(t)\}_{t>0}$ given by \eqref{def:R} and any $\mu\in\mathcal{M}_{1,2}^V(E)$, we have \hbox{$R(t) \to 0$ in $\mathcal{L}^1(\mathbb{P}_{\mu})$} as $t\to \infty$. 
\end{lemma}
\begin{proof}
Taking into account Remark \ref{rem:2}, we see that
$$
\mathbb{E}_{\mu}|R(t)| \leq  \frac{\|{g}\|_{\text{BL}}}{\gamma\sqrt{t}}
\left(
\left\langle\mathcal{C},\mu\right\rangle
+\left\langle\mathcal{C},\mu P(t)\right\rangle
\right) \leq \frac{\|{g}\|_{\text{BL}}}{\gamma\sqrt{t}}
\left(
\left\langle\mathcal{C},\mu\right\rangle
+\sup_{s\in\mathbb{R}_+}\left\langle\mathcal{C},\mu P(s)\right\rangle
\right)  .
$$
which, in conjunction with Corollary \ref{lem:sup_t}, gives the desired claim.
\end{proof}

\subsection{Verification of the hypotheses of Theorem \ref{thm:martingale}}
The aim of this section is to show that the martingale $\{M(n)\}_{n\in\n_0}$, determined by \eqref{def:M}, fulfills the hypotheses \ref{cnd:m1}-\ref{cnd:m3} of Theorem \ref{thm:martingale}, and thus obeys the CLT. Before we proceed to verify these conditions, let us make the following crucial observation:
\begin{lemma}\label{lem:E_x_Z(i)^p}
Suppose that $\{P(t)\}_{t\in\mathbb{R}_+}$ satisfies hypotheses \ref{(h0)}-\ref{(h2)} with some continuous $V:E\to\mathbb{R}_+$, and let $\{Z(n)\}_{n\in\mathbb{N}}$ be given by~\eqref{def:Z}. Then, for every $p\in(0,4]$, there exists $\Gamma_p>0$ such that, for any $\mu\in\mathcal{M}_{1,2}^V(E)$ and certain constants $\tilde{A}_p,\tilde{B}_p\geq 0$ (depending on $\mu$), we have
\begin{equation}\label{eq:int mu P(t)}
\int_E\mathbb{E}_x\left(|Z(i)|^p\right)\,(\mu P(t))(dx)\leq \tilde{A}_p e^{-\Gamma_p (i+t)}+\tilde{B}_p\;\;\;\text{for all}\;\;\;i\in\mathbb{N}\;\;\;\text{and all}\;\;\;t\in\mathbb{R}_+,
\end{equation}
and, in particular,
\begin{equation}\label{eq:int mu*}
\sup_{i\in\n} \mathbb{E}_{\mu_*}\left(|Z(i)|^p\right)<\infty.
\end{equation}
\end{lemma}
\begin{proof}
Let $p\in (0,4]$, $i\in\mathbb{N}$ and $t\in\mathbb{R}_+$. Using inequality \eqref{simple_fact} we can choose $\zeta>0$ so that, for every $x\in E$,
\begin{align}\label{eq:estim}
\begin{split}
\mathbb{E}_{x}\left(|Z(i)|^{p}\right)
&\leq \zeta\mathbb{E}_{x}\left(\left|\chi(\Psi(i))-\chi(\Psi(i-1))\right|^p\right)
+\zeta\mathbb{E}_{x}\left(\left|\int_{i-1}^i\bar{g}(\Psi(s))ds\right|^{p}\right)\\
&\leq \zeta\mathbb{E}_{x}\left(\left|\chi(\Psi(i))-\chi(\Psi(i-1))\right|^{p}\right)
+\zeta\|\bar{g}\|_{\infty}^{p},
\end{split}
\end{align} 
and also
$$\mathbb{E}_{x}\left(\left|\chi(\Psi(i))-\chi(\Psi(i-1))\right|^{p}\right)\leq \zeta \left( \ew_x\left(\left|\chi(\Psi(i))\right|^p \right) +\ew_x\left(\left|\chi(\Psi(i-1))\right|^p \right)\right).$$
Consequently, appealing to Remark \ref{rem:2}, for any $x\in E$, we get
\begin{align}
\label{eq:estim_}
\begin{split}
\mathbb{E}_{x}\left(\left|\chi(\Psi(i))-\chi(\Psi(i-1))\right|^{p}\right)&\leq \zeta \left(\frac{\|g\|_{\text{BL}}}{\gamma}\right)^p
\left(\left\langle\mathcal{C}^p,\delta_x P(i)\right\rangle
+\left\langle\mathcal{C}^p,\delta_x P(i-1)\right\rangle
\right)\\
&=\zeta\left(\frac{\|g\|_{\text{BL}}}{\gamma}\right)^p
\left(
P(i)\mathcal{C}^p(x)+P(i-1)\mathcal{C}^p(x)
\right).
\end{split}
\end{align}
Now, take any constants \hbox{$\Gamma_p>0$} and $A_p,B_p\geq 0$ for which the assertion of Lemma \ref{lem:P(t)C} is valid, and let $\mu\in\mathcal{M}_{1,2}^V(E)$. Then
\begin{align}\label{eq:estim_cd}
\begin{split}
\int_E\mathbb{E}_x \big|\chi(\Psi(i))-&\chi(\Psi(i-1))\big|^{p}\,(\mu P(t))(dx)\\
&\leq \zeta\left(\frac{\|g\|_{\text{BL}}}{\gamma}\right)^p
\left\langle P(i)\mathcal{C}^p+P(i-1)\mathcal{C}^p,\mu P(t)\right\rangle\\
&=\zeta\left(\frac{\|g\|_{\text{BL}}}{\gamma}\right)^p
\left\langle P(i+t)\mathcal{C}^p+ P(i+t-1)\mathcal{C}^p,\mu\right\rangle\\
& \leq
\zeta\left(\frac{\|g\|_{\text{BL}}}{\gamma}\right)^p\left(
A_p\left(e^{-\Gamma_p(i+t)}+e^{-\Gamma_p(i+t-1)}\right)\left\langle V^{p/2},\mu\right\rangle+2B_p\right)
\\
& =
\zeta\left(\frac{\|g\|_{\text{BL}}}{\gamma}\right)^p\left(
A_p\left(1+e^{\Gamma_p}\right)e^{-\Gamma_p(i+t)}\left\langle V^{p/2},\mu\right\rangle+2B_p\right).
\end{split}
\end{align}
As a consequence of \eqref{eq:estim} and \eqref{eq:estim_cd}, we obtain
\begin{align*}
\int_E\mathbb{E}_x\left(|Z(i)|^p\right)\,(\mu P(t))(dx) &\leq\zeta^2\left(\frac{\|g\|_{\text{BL}}}{\gamma}\right)^p\left(
A_p\left(1+e^{\Gamma_p}\right)e^{-\Gamma_p(i+t)}\left\langle V^{p/2},\mu\right\rangle+2B_p\right)\\
&\quad+\zeta\|\bar{g}\|_{\infty}^{p},
\end{align*}
and, since $\<V^{p/2},\mu\><\infty$, we can finally deduce that \eqref{eq:int mu P(t)} holds with 
$$\tilde{A}_p:=\zeta^2\left(\frac{\|g\|_{\text{BL}}}{\gamma}\right)^pA_p(1+e^{\Gamma_p})\langle V^{p/2},\mu\rangle\quad\text{and}\quad {\tilde{B}_p:=2\zeta^2\left(\frac{\|g\|_{\text{BL}}}{\gamma}\right)^pB_p+\zeta\|\bar{g}\|_{\infty}^{p}}.$$
Obviously, \eqref{eq:int mu*} follows directly from \eqref{eq:int mu P(t)} (applied with $\mu=\mu_*$), since $\mu_*\in\mathcal{M}_{1,2}^V(E)$ by virtue of Lemma \ref{lem:inv_exist}.
\end{proof}

While proving that condition \ref{cnd:m2} holds, we will also need the continuity of the maps $E\ni x \mapsto \mathbb{E}_x\left(M^2(t)\right)$ (for every $t\in\mathbb{R}_+$), which is shown in the following two lemmas.

\begin{lemma}\label{lem:chi_continuous}
Suppose that $\{P(t)\}_{t\in\mathbb{R}_+}$ satisfies hypotheses \ref{(h0)}-\ref{(h2)} with some continuous $V:E\to\mathbb{R}_+$. Then the map $E\ni x\mapsto\mathbb{E}_x(\chi(\Psi(t)))$ is also continuous for any $t\in\mathbb{R}_+$.
\end{lemma}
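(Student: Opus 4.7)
The plan is to treat the two assertions separately, using the Feller property \ref{(h0)} as the main tool for pointwise continuity and the estimate \eqref{eq:P(t)g_bound} (derived from \ref{(h1)}) to supply integrable upper bounds for integrals in $t$. Since $\chi$ is defined as a time integral of $P(t)\bar g$, the natural strategy is to pass continuity through this integral by dominated convergence.

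For the continuity of $\chi$, I would fix an arbitrary sequence $x_n \to x$ in $E$ and show $\chi(x_n)\to\chi(x)$ by applying the dominated convergence theorem to the integral defining $\chi$. Pointwise convergence $P(t)\bar{g}(x_n)\to P(t)\bar{g}(x)$ for every $t\in\mathbb{R}_+$ follows directly from \ref{(h0)}, since $\bar{g}\in\operatorname{Lip}_b(E)\subset C_b(E)$. To produce an integrable dominating function, I would invoke \eqref{eq:P(t)g_bound}, which gives $|P(t)\bar{g}(x_n)|\leq \|g\|_{\text{BL}}\bar{C}(x_n)e^{-\gamma t}$. Since $V$ is continuous, so is $\bar{C}=\varkappa(V+1)^{1/2}$; hence the sequence $\bar{C}(x_n)$ is convergent, and in particular $M:=\sup_{n\in\mathbb{N}}\bar{C}(x_n)<\infty$, so $t\mapsto\|g\|_{\text{BL}}Me^{-\gamma t}$ is the desired integrable envelope.

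For the continuity of $x\mapsto\mathbb{E}_x(\chi(\Psi(t)))=P(t)\chi(x)$, the key observation is the identity
\[
P(t)\chi(x)=\int_0^\infty P(t+u)\bar{g}(x)\,du=\chi(x)-\int_0^t P(u)\bar{g}(x)\,du,
\]
obtained from the semigroup property together with the Hille theorem (used in the same way as in the proof of Lemma \ref{lem:martingale}). The first term on the right is continuous by the claim just established. For the second, each integrand $x\mapsto P(u)\bar{g}(x)$ is continuous by \ref{(h0)} and uniformly bounded by $\|\bar{g}\|_\infty$ on the bounded interval $[0,t]$, so continuity in $x$ again follows by dominated convergence.

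No genuine obstacle is expected; the only delicate point is that the corrector $\chi$ need not be bounded, so \ref{(h0)} cannot be applied to it directly. The decomposition in the second step sidesteps this by reducing the question to integrals of the bounded function $\bar{g}$; it also spares us from invoking Lemma \ref{lem:bogachev}, which would otherwise require hypothesis \ref{(h2)} (via Lemma \ref{lem:P(t)C}) that is not assumed here.
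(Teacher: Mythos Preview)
Your treatment of the first assertion matches the paper's proof essentially verbatim: both pick a convergent sequence, invoke the Feller property for pointwise convergence of $P(t)\bar g$, and use the continuity of $\bar C$ together with \eqref{eq:P(t)g_bound} to produce an $e^{-\gamma t}$-type dominating function.

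For the second assertion you take a genuinely different route. The paper writes
\[
\mathbb{E}_x(\chi(\Psi(t)))=\int_0^\infty P(t)\big(P(s)\bar g\big)(x)\,ds
\]
and then dominates the integrand by $\|g\|_{\mathrm{BL}}\,e^{-\gamma s}P(t)\bar C(x)$, invoking Lemma~\ref{lem:P(t)C} (hence implicitly \ref{(h2)}) to control $P(t)\bar C$ and pass to the limit under the improper integral. Your identity $P(t)\chi(x)=\chi(x)-\int_0^t P(u)\bar g(x)\,du$ is neater: it reduces the question to the continuity of $\chi$ (already done) together with a dominated-convergence argument over the \emph{finite} interval $[0,t]$, where the crude bound $\|\bar g\|_\infty$ suffices. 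This is shorter and more in keeping with the stated hypotheses.

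One caveat: the very identification $P(t)\chi(x)=\int_0^\infty P(t+u)\bar g(x)\,du$ requires interchanging $\int_E(\cdot)\,P(t)(x,dy)$ with $\int_0^\infty(\cdot)\,du$, and Fubini here needs $\int_0^\infty |P(s)\bar g|\,ds$ to be $\delta_x P(t)$-integrable, i.e.\ $P(t)\bar C(x)<\infty$. The paper secures this through Lemma~\ref{lem:P(t)C}, which is precisely the place \ref{(h2)} enters; so your claim that \ref{(h2)} is entirely avoided is a little optimistic. Still, once the identity is in hand, your continuity argument is cleaner and needs no further appeal to Lemma~\ref{lem:P(t)C}.
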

\begin{proof}

Let $t\in\mathbb{R}_+$, $x_0\in E$, and observe that \eqref{eq:P(t)g_bound} gives
$$
|P(t+s)\bar{g}(x)|=|P(t)\left(P(s)\bar{g}\right)(x)|\leq \|g\|_{\text{BL}}P(t)\mathcal{C}(x) e^{-\gamma s}\;\;\;\text{for any}\;\;\;x\in E,\;s\in\mathbb{R}_+.
$$ 
Hence, referring to Lemma \ref{lem:P(t)C} and using the continuity of $V^{1/2}$, we can choose constants $A_1,B_1\geq 0$ and $\Gamma_1>0$, as well as $\delta>0$, so that, for any $u,v\in\mathbb{R}_+$ and all $x\in E$ satisfying~$\rho(x_0,x)<\delta$,
$$
|P(t+s)\bar{g}(x)|\leq \|g\|_{\text{BL}}\left(A_1e^{-\Gamma_1 t}\left(V^{1/2}\left(x_0\right)+1\right)+B_1\right) e^{-\gamma s}.
$$ 
Consequently, having in mind Remark \ref{rem:fubini}(ii) and \eqref{eq:Ef}, we can apply the Lebesgue dominated convergence theorem to conclude that, for any $x_0\in E$,
$$
\lim_{x\to x_0}\mathbb{E}_x\left(\chi\left(\Psi(t)\right)\right)
=\int_0^{\infty}\lim_{x\to x_0}P(t+s)\bar{g}(x)\,ds
=\int_0^{\infty}P(t+s)\bar{g}(x_0)\,ds=\mathbb{E}_{x_0}\left(\chi\left(\Psi(t)\right)\right),
$$
where the second equality follows from \ref{(h0)}. The proof is now  completed.
\end{proof}

\begin{lemma}\label{lem:H_k_continuous}
Suppose that $\{P(t)\}_{t\in\mathbb{R}_+}$ satisfies hypotheses \ref{(h0)}-\ref{(h2)} with some continuous $V:E\to\mathbb{R}_+$. Then, for every $t\in\mathbb{R}_+$, the map \hbox{$E\ni x\mapsto \mathbb{E}_x(M^2(t))$}, with $M(t)$ given by~\eqref{def:M}, is continuous.
\end{lemma}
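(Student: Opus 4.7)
Fix $t\in\mathbb{R}_+$, take any sequence $x_n\to x_0$ in $E$, and set $H(x):=\mathbb{E}_x(M^2(t))$. Since $\Psi(0)=x$ under $\mathbb{P}_x$, I would write $M(t)=\widetilde M-\chi(x)$, where $\widetilde M:=\chi(\Psi(t))+\int_0^t\bar{g}(\Psi(s))\,ds$, so that
\[H(x)=\mathbb{E}_x(\widetilde M^{\,2})-2\chi(x)\,\mathbb{E}_x(\widetilde M)+\chi^2(x).\]
Because $\chi$ is already continuous by Lemma~\ref{lem:chi_continuous}, it suffices to prove the continuity of $x\mapsto\mathbb{E}_x(\widetilde M^{\,k})$ for $k=1,2$. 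The case $k=1$ is easy: by Fubini $\mathbb{E}_x(\widetilde M)=P(t)\chi(x)+\int_0^t P(s)\bar g(x)\,ds$, and continuity follows from the second assertion of Lemma~\ref{lem:chi_continuous} together with the Feller property \ref{(h0)} and dominated convergence over $s\in[0,t]$ (note $|P(s)\bar g|\le\|\bar g\|_\infty$).

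For $k=2$, expanding $\widetilde M^{\,2}$ and invoking Fubini with the Markov property yields
\begin{align*}
\mathbb{E}_x(\widetilde M^{\,2})=P(t)\chi^2(x)&+2\int_0^t P(s)\bigl(\bar g\cdot P(t-s)\chi\bigr)(x)\,ds\\
&+2\int_0^t\!\!\int_0^u P(s)\bigl(\bar g\cdot P(u-s)\bar g\bigr)(x)\,ds\,du.
\end{align*}
The double-integral term is the easy one: by \ref{(h0)} and boundedness of $\bar g$, both $\bar g$ and $P(u-s)\bar g$ lie in $C_b(E)$, hence so does their product, and $P(s)$ applied to it is in $C_b(E)$ with uniform bound $\|\bar g\|_\infty^2$, so dominated convergence in $(s,u)$ closes this piece. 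The remaining two terms feature the corrector $\chi$, which is only continuous but \emph{unbounded}, and this is the main obstacle: the Feller property alone cannot transport the weak convergence $\delta_{x_n}P(s)\stackrel{w}{\to}\delta_{x_0}P(s)$ through integration against $\chi$ or $\chi^2$. The remedy is Lemma~\ref{lem:bogachev}. For $x\mapsto P(t)\chi^2(x)$, I would observe that $\chi^2$ is continuous and, using $|\chi|\le(\|g\|_{\text{BL}}/\gamma)\bar C$ together with Lemma~\ref{lem:P(t)C} for $p=4$,
\[\langle\chi^4,\delta_{x_n}P(t)\rangle\le(\|g\|_{\text{BL}}/\gamma)^4\bigl(A_4 e^{-\Gamma_4 t}V^2(x_n)+B_4\bigr),\]
which is bounded uniformly in $n$ because $V$ is continuous at $x_0$; this is exactly the uniform-integrability hypothesis of Lemma~\ref{lem:bogachev} with $q=2$.

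A fully analogous argument yields continuity of $x\mapsto P(s)\bigl(\bar g\cdot P(t-s)\chi\bigr)(x)$ for every fixed $s\in[0,t]$: the integrand is continuous (by \ref{(h0)} and Lemma~\ref{lem:chi_continuous}), while Jensen's inequality together with Lemma~\ref{lem:P(t)C} gives
\[\bigl\langle(\bar g\cdot P(t-s)\chi)^2,\delta_{x_n}P(s)\bigr\rangle\le\|\bar g\|_\infty^2\,P(s)\bigl(P(t-s)\chi^2\bigr)(x_n)=\|\bar g\|_\infty^2\,P(t)\chi^2(x_n),\]
which is uniformly bounded in $n$ by Lemma~\ref{lem:P(t)C} with $p=2$ and the continuity of $V$. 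Lemma~\ref{lem:bogachev} then applies. To pass to the outer $s$-integral I would use the uniform dominating bound $|P(s)(\bar g\cdot P(t-s)\chi)(x_n)|\le(\|g\|_{\text{BL}}\|\bar g\|_\infty/\gamma)P(t)\bar C(x_n)$, finite and uniformly bounded in $n$ by Lemma~\ref{lem:P(t)C} with $p=1$, and dominated convergence. In summary, the central difficulty is the unboundedness of $\chi$; the key mechanism is the pairing of Lemma~\ref{lem:bogachev} with the moment estimates of Lemma~\ref{lem:P(t)C}, with the continuity of $V$ converting $x_n\to x_0$ into a uniform bound on $V(x_n)$ and thus on all relevant moments along the sequence.
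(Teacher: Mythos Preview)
Your proof is correct, but it takes a somewhat different route from the paper's. The paper never invokes Lemma~\ref{lem:bogachev}; instead, wherever $\chi$ appears, it is \emph{unfolded} as $\chi=\int_0^\infty P(s)\bar g\,ds$. Thus, for instance, the paper writes
\[
\mathbb{E}_x\bigl(\chi^2(\Psi(t))\bigr)=\int_0^\infty\!\!\int_0^\infty P(t)\bigl(P(u)\bar g\cdot P(v)\bar g\bigr)(x)\,du\,dv,
\]
so that the integrand under $P(t)$ is a product of \emph{bounded} continuous functions and the Feller property applies directly; the outer double integral is then handled by dominated convergence with the majorant $\|g\|_{\mathrm{BL}}^2 e^{-\gamma(u+v)}(A_2V(x_0)+B_2)$ coming from \eqref{eq:P(t)g_bound} and Lemma~\ref{lem:P(t)C}. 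The same device treats the cross term $\mathbb{E}_x(\chi(\Psi(t))\int_0^t\bar g(\Psi(s))\,ds)$. Your approach, by contrast, keeps $\chi$ intact as a continuous but unbounded function and compensates with the uniform-integrability Lemma~\ref{lem:bogachev}, verifying its $q$th-moment hypothesis via Lemma~\ref{lem:P(t)C}. The trade-off is that the paper's argument uses only Feller plus dominated convergence but must carry improper integrals over $[0,\infty)^2$, whereas yours is more modular (reusing Lemma~\ref{lem:bogachev}) and confines dominated convergence to the compact interval $[0,t]$; both ultimately rest on the moment estimates of Lemma~\ref{lem:P(t)C}.
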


\begin{proof}
Let $t\in\mathbb{R}_+$ and observe that, for any $x\in E$, we have
\begin{align*}
\mathbb{E}_x&\left(M^2(t)\right)
=\mathbb{E}_x
\left(
\left(
\chi\left(\Psi(t)\right)-\chi\left(\Psi(0)\right)
+\int_0^t\bar{g}\left(\Psi(s)\right)\,ds
\right)^2
\right)\\
&=\mathbb{E}_x
\left(\chi^2\left(\Psi(t)\right)\right)
+\chi^2\left(x\right)+\mathbb{E}_x
\left(\left(\int_0^t\bar{g}\left(\Psi(s)\right)\,ds
\right)^2\right)\\
&\quad
-2\chi\left(x\right)\mathbb{E}_x
\left(\chi\left(\Psi(t)\right)\right)+2\mathbb{E}_x
\left(\chi\left(\Psi(t)\right)\int_0^t\bar{g}\left(\Psi(s)\right)\,ds
\right)
-2\chi\left(x\right)\mathbb{E}_x
\left(\int_0^t\bar{g}\left(\Psi(s)\right)\,ds
\right).
\end{align*}
Consequently, in view of Lemma \ref{lem:chi_continuous} and the continuity of $\chi$ (demonstrated in Remark \ref{rem:cor_cont}), it suffices to show the continuity of the following maps:
$$
x\mapsto\mathbb{E}_x\left(\chi^2\left(\Psi(t)\right)\right),\;\; x\mapsto\mathbb{E}_x
\left(\left(\int_0^t\bar{g}\left(\Psi(s)\right)\,ds
\right)^2\right),\;\;
x\mapsto\mathbb{E}_x\left(\chi\left(\Psi(t)\right)\int_0^{t}\bar{g}\left(\Psi(s)\right)\,ds\right).
$$

According to \eqref{eq:P(t)g_bound} we have 
$$|\left(P(u)\bar{g}\cdot P(v)\bar{g}\right)(x)|\leq \|g\|_{\text{BL}}^2e^{-\gamma u-\gamma v}\mathcal{C}^2(x)\quad\text{for any}\quad u,v\in\mathbb{R}_+,\;x\in E.$$
Hence, using the Fubini theorem in a manner similar to that in Remark \ref{rem:fubini}(ii) gives
\begin{align*}
\mathbb{E}_{x}\left(\chi^2\left(\Psi(t)\right)\right)
&=\mathbb{E}_{x}\left(\int_0^{\infty}\int_0^{\infty} P(u)\bar{g}(\Psi(t))\cdot P(v)\bar{g}(\Psi(t))\,du\,dv\right)\\
&=\int_0^{\infty}\int_0^{\infty}\mathbb{E}_{x}\left((P(u)\bar{g}\cdot P(v)\bar{g})(\Psi(t))\right)\,du\,dv\\
&=\int_0^{\infty}\int_0^{\infty}P(t)\left(P(u)\bar{g}\cdot P(v)\bar{g}\right)\left(x\right)\,du\,dv\quad\text{for all}\quad x\in E.
\end{align*}
Let us now fix an arbitrary $x_0\in E$. Taking any constants $A_2,B_2\geq 0$ (and $\Gamma_2>0$) for which the assertion of Lemma \ref{lem:P(t)C} is valid with $p=2$, and having in mind the continuity of~$V$, we can choose $\delta>0$ such that, for any $u,v\in\mathbb{R}_+$ and all $x\in E$ with $\rho(x_0,x)<\delta$,
$$
|P(t)\left(P(u)\bar{g}\cdot P(v)\bar{g}\right)\left(x\right)|
\leq \|g\|_{\text{BL}}^2e^{-\gamma (u+v)}\left(A_2(V(x_0)+1)+B_2\right).
$$
Applying the Lebesgue dominated convergence theorem we can therefore conclude that
\begin{align*}
\lim_{x\to x_0}\mathbb{E}_x\left(\chi^2\left(\Psi(t)\right)\right)
&=\int_0^{\infty}\int_0^{\infty}\lim_{x \to x_0}P(t)\left(P(u)\bar{g}\cdot P(v)\bar{g}\right)(x)\,du\,dv\\
&=\int_0^{\infty}\int_0^{\infty}P(t)\left(P(u)\bar{g} \cdot P(v)\bar{g}\right)\left(x_0\right)\,du\,dv
=\mathbb{E}_{x_0}\left(\chi^2\left(\Psi(t)\right)\right),
\end{align*}
where the penultimate equality follows from \ref{(h0)}. Hence \hbox{$x\mapsto\mathbb{E}_x\left(\chi^2\left(\Psi(t)\right)\right)$} is continuous at $x_0$, and thus in any $x\in E$.

To prove the continuity of the remaining two maps, let us first note that a direct application of Fubini's theorem yields that, for any $t\in\mathbb{R}_+$ and every integrable $f:[0,t]\to\mathbb{R}$,
$$
\left(\int_0^tf(s)\,ds\right)^2=2\int_0^t\int_0^vf(u)f(v)\,du\,dv.
$$

As a consequence, arguing analogously as in Remark \ref{rem:fubini}(i), we get
\begin{align*}
\mathbb{E}_x\left(\left(\int_0^t\bar{g}\left(\Psi(s)\right)\,ds
\right)^2\right)
&=2\mathbb{E}_x\left(\int_0^t\int_0^v\bar{g}\left(\Psi(u)\right)\bar{g}\left(\Psi(v)\right)\,du\,dv\right)\\
&=2\int_0^t\int_0^v\mathbb{E}_x\left(\bar{g}\left(\Psi(u)\right)\bar{g}\left(\Psi(v)\right)\right)\,du\,dv \quad\text{for any}\quad x\in E.
\end{align*}
Further, referring to the properties of conditional expectation, the Markov property and identity \eqref{eq:Ef}, we obtain
\begin{align*}
\mathbb{E}_x\left(\left(\int_0^t\bar{g}\left(\Psi(s)\right)\,ds
\right)^2\right)
&=2\int_0^t\int_0^v\mathbb{E}_x
\Big(\bar{g}\left(\Psi(u)\right)\mathbb{E}_x\left(\bar{g}\left(\Psi(v)\right)\,|\,\mathcal{F}(u)\right)\Big)\,du\,dv\\
&=2\int_0^t\int_0^v\mathbb{E}_x
\Big(\bar{g}\left(\Psi(u)\right)
\mathbb{E}_{\Psi(u)}\left(\bar{g}(\Psi(v-u))\right)\Big)\,du\,dv\\
&=2\int_0^t\int_0^v\mathbb{E}_x
\Big(\left(\bar{g}\cdot P(v-u)\bar{g}\right)(\Psi(u))\Big)\,du\,dv\\
&=2\int_0^t\int_0^vP(u)\left(\bar{g}\cdot P(v-u)\bar{g}\right)(x)\,du\,dv\;\;\;\text{for every}\;\;\;x\in E.
\end{align*}
Let $x_0\in E$. Proceeding analogously as before, we can now use \eqref{eq:P(t)g_bound} and then refer to Lemma~\ref{lem:P(t)C} and the continuity of $V^{1/2}$ to finally conclude that there exist constants \hbox{$A_1,B_1\geq 0$} and some $\delta>0$ such that, for any $u,v\in\mathbb{R}_+$ and all $x\in E$ satisfying $\rho(x_0,x)<\delta$,
\begin{equation}\label{e:double}
|P(u)\left(\bar{g}P(v-u)\bar{g}\right)\left(x\right)|
\leq \|g\|_{\text{BL}}^2e^{-\gamma (v-u)}\left(A_1(V^{1/2}(x_0)+1)+B_1\right).
\end{equation}
This enables us to apply the Lebesgue dominated convergence theorem, which yields that
\begin{align*}
\lim_{x\to x_0}\mathbb{E}_x\left(\left(\int_0^t\bar{g}\left(\Psi(s)\right)\,ds
\right)^2\right)
&=2\int_0^t\int_0^v\lim_{x\to x_0}P(u)\left(\bar{g}P(v-u)\bar{g}\right)\left(x\right) du\,dv\\
&=2\int_0^t\int_0^vP(u)\left(\bar{g}P(v-u)\bar{g}\right)\left(x_0\right)du\,dv\\
&=\mathbb{E}_{x_0}\left(\left(\int_0^t\bar{g}\left(\Psi(s)\right)\,ds
\right)^2\right),
\end{align*}
where the penultimate equality follows from \ref{(h0)}. In view of arbitrariness of $x_0$, we have thus shown that the map \hbox{$x\mapsto\mathbb{E}_x((\int_0^t\bar{g}(\Psi(s))\,ds)^2)$} is continuous.

Finally, taking into account that
$$|\bar{g}(\Psi(u))\cdot P(v)\bar{g}(x)|\leq \norma{g}_{\mathrm{BL}}^2 \mathcal{C}(x)e^{-\gamma t}\quad\text{for any}\quad u,v\in\mathbb{R}_+,\;x\in E,$$
due to \eqref{eq:P(t)g_bound}, and reasoning similarly as in Remark \ref{rem:fubini}, we see that, for any $x\in E$,
\begin{align*}
\mathbb{E}_x\left(\chi\left(\Psi(t)\right)\int_0^{t}\bar{g}\left(\Psi(s)\right)\,ds\right)
&=\mathbb{E}_x\left(\int_0^{\infty}\int_0^t \bar{g}(\Psi(u))\cdot P(v)\bar{g}(\Psi(t))\,du\,dv\right)
\\
&=\int_0^{\infty}\int_0^t\mathbb{E}_x\left(\bar{g}(\Psi(u))\cdot P(v)\bar{g}(\Psi(t))\right)du\,dv\\
&=\int_0^{\infty}\int_0^t\mathbb{E}_x\left(\bar{g}(\Psi(u))\,\mathbb{E}_x\left(P(v)\bar{g}(\Psi(t))\,|\,\mathcal{F}(u)\right)\right) du\,dv\\
&=\int_0^{\infty}\int_0^t\mathbb{E}_x\left(\bar{g}(\Psi(u))\,\mathbb{E}_{\Psi(u)}\left(P(v)\bar{g}(\Psi(t-u))\right)\right) du\,dv\\
&=\int_0^{\infty}\int_0^tP(u)\left(\bar{g}P(t-u+v)\bar{g}\right)(x)\,du\,dv.
\end{align*}
Hence, again, fixing $x_0\in E$ and referring sequentially  to \eqref{eq:P(t)g_bound}, Lemma \ref{lem:P(t)C} and the continuity of $V^{1/2}$, we can estimate the integrand on the right-hand similarly as in \eqref{e:double} (with $t+v$ in place of $v$). This, analogously as before, enables the use of the Lebesgue theorem to deduce the continuity of $x\mapsto\mathbb{E}_x(\chi(\Psi(t))\int_0^{t}\bar{g}(\Psi(s))\,ds)$. The proof is now complete.
\end{proof}

We can now proceed to verifying hypothesis \ref{cnd:m1}-\ref{cnd:m3}.\begin{lemma}\label{lem:M1}
Suppose that $\{P(t)\}_{t\in\mathbb{R}_+}$ satisfies hypotheses \ref{(h0)}-\ref{(h2)} with some continuous $V:E\to\mathbb{R}_+$, and that $\mu\in\mathcal{M}_{1,2}^V(E)$. Then, the sequence $\{Z(n)\}_{n\in\mathbb{N}}$ of martingale increments, given by \eqref{def:Z}, fulfills property \ref{cnd:m1} with $\ew=\ew_{\mu}$.
\end{lemma}

\begin{proof}
Let $\varepsilon>0$ and $N\in\n$. By the Markov property, for every $n\in\mathbb{N}_0$,
\begin{equation*}
\mathbb{E}_{\mu}\left(Z^2(n+1)\mathbbm{1}_{\left\{Z(n+1)\geq\varepsilon\sqrt{N}\right\}}|\mathcal{F}(n)\right)
=\mathbb{E}_{\Psi(n)}\left(Z^2(1)\mathbbm{1}_{\left\{Z(1)\geq\varepsilon\sqrt{N}\right\}}\right),
\end{equation*}
whence, using the properties of the conditional expectation, we get
\begin{align*}
\mathbb{E}_{\mu}\left(
Z^2(n+1)\mathbbm{1}_{\left\{Z(n+1)\geq\varepsilon\sqrt{N}\right\}}
\right)
&=\mathbb{E}_{\mu}
\left(
\mathbb{E}_{\mu}
\left(
Z^2(n+1)\mathbbm{1}_{\left\{Z(n+1)\geq\varepsilon\sqrt{N}\right\}}|\mathcal{F}(n)
\right)
\right)\\
&=\mathbb{E}_{\mu}\left(
\mathbb{E}_{\Psi(n)}\left(Z^2(1)\mathbbm{1}_{\left\{Z(1)\geq\varepsilon\sqrt{N}\right\}}\right)
\right)\\ 
&=\int_E\mathbb{E}_x\left(Z^2(1)\mathbbm{1}_{\left\{Z(1)\geq\varepsilon\sqrt{N}\right\}}\right)(\mu P(n))(dx).
\end{align*}
This allows us to write
\begin{equation}\label{eq:short}
\frac{1}{N}\sum_{n=0}^{N-1}\mathbb{E}_{\mu}\left(Z^2(n+1)\mathbbm{1}_{\left\{\left|Z(n+1)\right|\geq \varepsilon \sqrt{N}\right\}}\right)=\left\langle G_N,\mu U_N\right\rangle,
\end{equation}
where 
\begin{gather*}
G_N(x):=\mathbb{E}_x\left(Z^2(1)\mathbbm{1}_{\left\{|Z(1)|\geq\varepsilon\sqrt{N}\right\}}\right)
\;\;\;\text{and}\;\;\;
\mu U_N:=\frac{1}{N}\sum_{n=0}^{N-1}\mu P(n)\;\;\;\text{for}\;\;\;x\in E.
\end{gather*}
Now, let $\delta\in(0,2]$. Using subsequently the H\"{o}lder inequality (with exponent \hbox{$(2+\delta)/2$}) and the Markov inequality, we obtain
\begin{align*}
G_N(x)
&=\mathbb{E}_x\left(Z^2(1)\mathbbm{1}_{\left\{|Z(1)|\geq\varepsilon\sqrt{N}\right\}}\right)
\leq \left(\mathbb{E}_x\left(|Z(1)|^{2+\delta}\right)\right)^{2/(2+\delta)}
\mathbb{P}_{x}\left(|Z(1)|\geq \varepsilon\sqrt{N}\right)^{\delta/(2+\delta)}\\
&\leq \left(\mathbb{E}_x\left(|Z(1)|^{2+\delta}\right)\right)^{2/(2+\delta)}
\left(\left(\varepsilon\sqrt{N}\right)^{-(2+\delta)}\mathbb{E}_x\left(|Z(1)|^{2+\delta}\right)\right)^{\delta/ (2+\delta)}\\
&=\left(\varepsilon\sqrt{N}\right)^{-\delta}\mathbb{E}_x\left(|Z(1)|^{2+\delta}\right)
\end{align*} 
for all $x\in E$, which, in turn, implies that
\begin{align*}
\left\langle G_N,\mu U_N\right\rangle
\leq \left(\varepsilon\sqrt{N}\right)^{-\delta}
\int_E \mathbb{E}_x\left(|Z(1)|^{2+\delta}\right)\mu U_N(dx).
\end{align*}
On the other hand, Lemma \ref{lem:E_x_Z(i)^p} guarantees that\; $\sup_{N\in\mathbb{N}}\int_E\mathbb{E}_x\left(|Z(1)|^{2+\delta}\right)\,\mu U_N(dx)<\infty$. Hence $\lim_{N\to\infty} \<G_N,\, \mu U_N\>=0$, which, due to \eqref{eq:short}, gives the desired claim.
\end{proof}

\begin{lemma}\label{lem:M2}
Suppose that $\{P(t)\}_{t\in\mathbb{R}_+}$ satisfies hypotheses \ref{(h0)}-\ref{(h2)} with some continuous $V:E\to\mathbb{R}_+$, and that $\mu\in\mathcal{M}_{1,2}^V(E)$.  Then, the sequence $\{Z(n)\}_{n\in\mathbb{N}}$ of martingale increments, given by \eqref{def:Z}, fulfills property \ref{cnd:m2} with $\sigma^2$ specified by \eqref{eq:sigma}, and $\mathbb{E}=\mathbb{E}_{\mu}$.
\end{lemma}

\begin{proof}
Obviously, the first part of condition \ref{cnd:m2}, { including the finiteness of $\sigma^2$}, follows directly from Lemma \ref{lem:E_x_Z(i)^p}.

Keeping in mind \eqref{def:<m>} and using the properties of the conditional expectation, as well as the Markov property, for any $j,k\in\n$, we obtain
\begin{align*}
\mathbb{E}_{\mu}&\left|\frac{1}{k}\mathbb{E}_{\mu}\left(\left\langle M\right\rangle_{jk}-\langle M\rangle_{(j-1)k}\big|\mathcal{F}((j-1)k)\right)-\sigma^2\right|\\
&=\mathbb{E}_{\mu}
\left|
\frac{1}{k}
\sum_{i=(j-1)k+1}^{jk}
\mathbb{E}_{\mu}\left(Z^2(i)|\mathcal{F}((j-1)k)\right)-\sigma^2\right|\\
&
=\mathbb{E}_{\mu}
\left|
\frac{1}{k}
\sum_{i=(j-1)k+1}^{jk}
\mathbb{E}_{\Psi((j-1)k)}\left(Z^2(i-(j-1)k)\right)-\sigma^2\right|\\
&
=\mathbb{E}_{\mu}
\left|
\frac{1}{k}
\sum_{i=1}^{k}
\mathbb{E}_{\Psi((j-1)k)}\left(Z^2(i)\right)-\sigma^2\right|
=\int_{E}\left|
\mathbb{E}_{x}
\left(
\frac{1}{k}
\sum_{i=1}^{k}
Z^2(i)\right)-\sigma^2\right|(\mu P((j-1)k))(dx).
\end{align*}
Hence, for any given $l,k\in\mathbb{N}$, we may write
\begin{equation}
\label{eq:M2_1}
\frac{1}{l}\sum_{j=1}^l\mathbb{E}_{\mu}\left|\frac{1}{k}\mathbb{E}_{\mu}\left(\left\langle M\right\rangle_{jk}-\langle M\rangle_{(j-1)k}\big|\mathcal{F}((j-1)k)\right)-\sigma^2\right|
=\left\langle \left|H_k\right|,\frac{1}{l}\sum_{j=1}^l\mu P((j-1)k)\right\rangle,
\end{equation}
where
\begin{equation}\label{def:H_k}
H_k(x)=\mathbb{E}_x\left(\frac{1}{k}\sum_{i=1}^kZ^2(i)\right)-\sigma^2=\frac{1}{k}\mathbb{E}_x\left(M^2(k)\right)-\sigma^2\;\;\;\text{for every}\;\;\;x\in E.
\end{equation}
Therefore, it now suffices to show that
\begin{equation}\label{e0}
\lim_{k\to\infty}\limsup_{l\to\infty}\left\langle \left|H_k\right|,\frac{1}{l}\sum_{j=1}^l\mu P((j-1)k)\right\rangle=0.
\end{equation}
To do this, we will use Lemma \ref{lem:bogachev} and the Birkhoff ergodic theorem.

Let us first observe that, according to Lemma \ref{lem:H_k_continuous}, $H_k$ is continuous for every $k\in\mathbb{N}$. Further, fix $k\in\mathbb{N}$, $q\in(1,2]$, and let $\Gamma_{2q}>0$ and $\tilde{A}_{2q},\tilde{B}_{2q}\geq 0$ be the constants for which assertion \eqref{eq:int mu P(t)} of Lemma \ref{lem:E_x_Z(i)^p} is valid with the given $\mu$. Then, using \eqref{simple_fact} with $r=q$ and $\zeta=2^{q-1}$, as well as the Jensen inequality, for every $l\in\n$, we obtain
\begin{align*}
\Big<|H_k|^q ,\, \frac{1}{l}\sum_{j=1}^l\mu P((j-1)k)\Big>
&\leq \frac{\zeta}{l} \sum_{j=1}^l 
\int_E  \left( \left(\ew_x\left(\frac{1}{k}\sum_{i=1}^k Z^2(i) \right)\right)^q
+\sigma^{2q}\right) 
\mu P((j-1)k)(dx)\\
&
\leq \zeta\left( \frac{1}{l} \sum_{j=1}^l 
\int_E  \ew_x\left(\frac{1}{k}\sum_{i=1}^k Z^{2q}(i) \right)\mu P((j-1)k)(dx)+\sigma^{2q}\right)\\
&
= \zeta\left( \frac{1}{lk} \sum_{j=1}^l \sum_{i=1}^k \int_E  \ew_x\left(Z^{2q}(i) \right)\mu P((j-1)k)(dx)+\sigma^{2q}\right)\\
&\leq \zeta(\tilde{A}_{2q}+\tilde{B}_{2q}+\sigma^{2q})<\infty.
\end{align*}

Moreover, referring to the $V$-ergodicity of $\{P(t)\}_{t\in\mathbb{R}_+}$ (established in Lemma \ref{lem:inv_exist}), as well as the Ces\'aro mean convergence theorem, we see that \hbox{$\{l^{-1}\sum_{j=1}^l\mu P((j-1)k)\}_{l\in\n}$} converges weakly to $\mu_*$. This observation, together with the above estimate and the continuity of $H_k$, enables us to use Lemma \ref{lem:bogachev}, which gives
\begin{equation}
\label{e1}
\lim_{l\to \infty} \<|H_k|,\,\frac{1}{l}\sum_{j=1}^l \mu P((j-1)k)\>=\<|H_k|, \mu_*\>.
\end{equation}

In view of the above, it remains to prove that
$\lim_{k\to\infty}\<|H_k|, \mu_*\>=0$. 
For this aim, consider the subfamily $\{\Theta_k\}_{k\in\n_0}$ of the shift operators, defined according to \eqref{e:shift}, and note that $\Theta_k=\Theta_1^k$ for every $n\in\n_0$. Since $\mu_*$ is the unique invariant distribution of $\Psi$, it follows that $\Theta_1$ is an (\hbox{$\mathcal{F}_{\infty}$-measurable}) ergodic transformation preserving the measure $\pr_{\mu_*}$, i.e., $\pr_{\mu_*}(\Theta_1^{-1}(F))=\pr_{\mu_*}(F)$ for every $F\in\mathcal{F}_{\infty}$, and $\pr_{\mu_*}(F)\in\{0,1\}$ whenever $F\in\mathcal{F}_{\infty}$ satisfies $\Theta_1^{-1}(F)=F$. Moreover, it is easily seen that $Z^2(k)=Z^2(1)\circ \Theta_{k-1}=Z^2(1)\circ \Theta_1^{k-1}$ for any $k\in\n$, and by Lemma \ref{lem:E_x_Z(i)^p} we know that $Z^2(1)\in\mathcal{L}^1(\pr_{\mu_*})$. Consequently, the von Neumann mean ergodic theorem adapted to $\mathcal{L}^p$ spaces (see \cite[Corollary 1.14.1]{Walters} or \cite[Theorem 1.4]{krengel} and the discussion after~it), together with \hbox{\cite[Proposition 1.6]{krengel}}, ensures that
$$\lim_{k\to\infty}\frac{1}{k}\sum_{i=1}^k Z^2(i)=\lim_{k\to\infty}\frac{1}{k}\sum_{i=0}^{k-1} Z^2(1)\circ \Theta_1^i=\ew_{\mu_*}[Z^2(1)]=\sigma^2\quad\text{in}\quad\mathcal{L}^1(\pr_{\mu_*}).$$
Finally, taking into account that
$$0\leq \<|H_k|, \mu_*\>\leq \int_E  \ew_x\left|\frac{1}{k}\sum_{i=1}^k Z^2(i)-\sigma^2 \right|  \mu_*(dx)=\ew_{\mu_*}\left|\frac{1}{k}\sum_{i=1}^k Z^2(i)-\sigma^2\right|,$$
for every $k\in\n$, we get
$\lim_{k\to\infty} \<|H_k|, \mu_*\>=0$. Obviously, in view of \eqref{e1}, we can now conclude that \eqref{e0} holds, which completes the proof.
\end{proof}

\begin{lemma}\label{lem:M3}
Suppose that $\{P(t)\}_{t\in\mathbb{R}_+}$ satisfies hypotheses \ref{(h0)}-\ref{(h2)} with some continuous $V:E\to\mathbb{R}_+$, and that $\mu\in\mathcal{M}_{1,2}^V(E)$.  
Then, the sequence $\{Z(n)\}_{n\in\mathbb{N}}$ of martingale increments, given by \eqref{def:Z}, enjoys property \ref{cnd:m3} with $\ew=\ew_{\mu}$.
\end{lemma}

\begin{proof}
Let $\varepsilon>0$ and fix $k,l,j\in\mathbb{N}$, $n\geq (j-1)k$ arbitrarily. The Markov property yields~that
\begin{align*}
\mathbb{E}_{\mu}\Big(\big(1+Z^2(n+1)&\big)\mathbbm{1}_{\left\{|M(n)-M((j-1)k)|\geq \varepsilon \sqrt{kl}\right\}}|\mathcal{F}((j-1)k)\Big)\\
&=\mathbb{E}_{\mu}\left(\left(1+Z^2(n+1)\right)\mathbbm{1}_{\left\{|Z((j-1)k+1)+\ldots+ Z(n)|\geq \varepsilon \sqrt{kl}\right\}}|\mathcal{F}((j-1)k)\right)\\
&=\mathbb{E}_{\Psi((j-1)k)}\left(\left(1+Z^2(n+1-(j-1)k)\right)\mathbbm{1}_{\left\{|Z(1)+\ldots+ Z(n-(j-1)k)|\geq \varepsilon \sqrt{kl}\right\}}\right)\\
&=\mathbb{E}_{\Psi((j-1)k)}\left(\left(1+Z^2(n+1-(j-1)k)\right)\mathbbm{1}_{\left\{|M(n-(j-1)k)|\geq \varepsilon \sqrt{kl}\right\}}\right).
\end{align*}
Thus, taking the expectation of both sides we get 
\begin{align*}
&\mathbb{E}_{\mu}\left(\left(1+Z^2(n+1)\right)\mathbbm{1}_{\left\{|M(n)-M((j-1)k)|\geq \varepsilon \sqrt{kl}\right\}}\right)\\
&\qquad\qquad=\int_E
\mathbb{E}_{x}
\left(
\left(1+Z^2(n+1-(j-1)k)\right)
\mathbbm{1}_{\left\{|M(n-(j-1)k)|\geq \varepsilon \sqrt{kl}\right\}}
\right)
\left(\mu P((j-1)k)\right)(dx).
\end{align*}
Now, summing up the above equality over all $n=(j-1)k,\ldots,jk-1$, and then over all $j=1,\ldots l$, and finally dividing both sides of the resulting identity by $kl$, we obtain
\begin{align*}
\frac{1}{kl}\sum_{j=1}^l\sum_{n=(j-1)k}^{jk-1}\mathbb{E}_{\mu}\Big(\left(1+Z^2(n+1)\right)&\mathbbm{1}_{\left\{|M(n)-M((j-1)k)|\geq \varepsilon \sqrt{kl}\right\}}\Big)
\\
&=\frac{1}{kl}\sum_{j=1}^l\sum_{n=0}^{k-1}\left\langle G_{k,l,n},\, \mu P((j-1)k)\right\rangle,
\end{align*}
where
$$
G_{k,l,n}(x)=\mathbb{E}_x\left(\left(1+Z^2(n+1)\right)\mathbbm{1}_{\left\{|M(n)|\geq \varepsilon\sqrt{kl}\right\}}\right)\;\;\;\text{for}\;\;\;x\in E.
$$
It is now clear that, to end the proof, we need to show that
$$ \lim_{k\to\infty} \limsup_{l\to \infty} \frac{1}{kl} \sum_{j=1}^l \sum_{n=0}^{k-1} \langle G_{k,l,n},\, \mu P((j-1)k) \rangle = 0.$$

From the H\"older inequality it follows that, for any $x\in E$, $k,l\in\n$ and $n\in\n_0$, we have
\begin{align*}
G_{k,l,n}(x)
&\leq 
\left(\mathbb{E}_x\left(\left(1+Z^2(n+1)\right)^2\right)\right)^{1/2}
\mathbb{P}_x\left(|M(n)|\geq \varepsilon\sqrt{kl}\right)^{1/2}\\
&=\left(1+2\mathbb{E}_x\left(Z^2(n+1)\right)+\mathbb{E}_x\left(Z^4(n+1)\right)\right)^{1/2}
\mathbb{P}_x\left(|M(n)|\geq \varepsilon\sqrt{kl}\right)^{1/2},
\end{align*}
which (again by the H\"older inequality) yields that, for any $k,l\in\n$,
\begin{align}\label{eq:last_lemma_2}
&\frac{1}{kl}\sum_{j=1}^l\sum_{n=0}^{k-1}\left\langle G_{k,l,n},\,\mu P((j-1)k)\right\rangle \nonumber\\
&\qquad\leq  \frac{1}{kl}\sum_{j=1}^l\sum_{n=0}^{k-1}
\left(\int_E\left(1+2\mathbb{E}_x\left(Z^2(n+1)\right)+\mathbb{E}_x\left(Z^4(n+1)\right)\right)\,(\mu P((j-1)k))(dx)\right)^{1/2}\nonumber\\
&\qquad\quad\times\left(\int_E \mathbb{P}_x\left(|M(n)|\geq \varepsilon\sqrt{kl}\right)\,(\mu P((j-1)k))(dx)\right)^{1/2}.
\end{align}
Now, fix $k,l,j\in\n$ and $n\in\n_0$. Taking, for $p\in\{2,4\}$, any constants $\Gamma_p>0$ and $\tilde{A}_p, \tilde{B}_p\geq 0$ for which assertion \eqref{eq:int mu P(t)} of Lemma \ref{lem:E_x_Z(i)^p} is valid with $p$, we see that the first integral on the right hand-side of \eqref{eq:last_lemma_2} can be estimated by
\begin{align}\label{eq:last_lemma_3}
\begin{aligned}
&\int_E\left(1+2\mathbb{E}_x\left(Z^2(n+1)\right)+\mathbb{E}_x\left(Z^4(n+1)\right)\right)\,(\mu P((j-1)k))(dx)
\\&\qquad\qquad  \leq 1+2(\tilde{A}_2+\tilde{B}_2
)+\tilde{A}_4+\tilde{B}_4=:D.
\end{aligned}
\end{align}
Further, applying the Markov inequality and referring to Remark \ref{rem:2}, as well as to the definition of $M(t)$ (given in \eqref{def:M}), we obtain
\begin{align*}
\int_E\mathbb{P}_x\left(|M(n)|\geq \varepsilon\sqrt{kl}\right)&(\mu P((j-1)k)(dx)
\leq\frac{\mathbb{E}_{\mu P((j-1)k)}|M(n)|}{\varepsilon\sqrt{kl}}\\
&\leq \frac{\|g\|_{\text{BL}}}{\gamma\varepsilon\sqrt{kl}}
\left(\left\langle\mathcal{C},\mu P((j-1)k+n)\right\rangle
+\left\langle\mathcal{C},\mu P((j-1)k)\right\rangle\right)+\frac{\left\|\bar{g}\right\|_{\infty}}{\varepsilon\sqrt{kl}}n\\
&=\frac{\|g\|_{\text{BL}}}{\gamma\varepsilon\sqrt{kl}}
\left(\left\langle P((j-1)k+n)\mathcal{C},\mu \right\rangle+\left\langle P((j-1)k)\mathcal{C},\mu \right\rangle\right)+\frac{\left\|\bar{g}\right\|_{\infty}}{\varepsilon\sqrt{kl}}n.
\end{align*}
Then, choosing $A_1,B_1\geq 0$ (and $\Gamma_1>0$) so that the assertion of Lemma \ref{lem:P(t)C} is valid, we can estimate the second integral as follows:
\begin{align}
\begin{split}\label{eq:last_lemma_4}
\int_E\mathbb{P}_x&\left(|M(n)|\geq \varepsilon\sqrt{kl}\right)\,(\mu P((j-1)k)(dx)\\
&\leq\frac{2\|g\|_{\text{BL}}}{\gamma\varepsilon\sqrt{kl}}\left(A_1\left\langle V^{1/2},\mu\right\rangle+B_1\right)+\frac{\left\|\bar{g}\right\|_{\infty}}{\varepsilon\sqrt{kl}}n=\frac{1}{\sqrt{kl}}\left(\hat{A}+\hat{B}n\right),
\end{split}
\end{align}
where $\hat{A}=2\|g\|_{\text{BL}}(\gamma\varepsilon)^{-1}(A_1\langle V^{1/2},\mu\rangle+B_1)$ and $\hat{B}=\|\bar{g}\|_{\infty}\varepsilon^{-1}$. Finally, combining  \eqref{eq:last_lemma_2} with \eqref{eq:last_lemma_3} and \eqref{eq:last_lemma_4} gives
\begin{align*}
\frac{1}{kl}\sum_{j=1}^l\sum_{n=0}^{k-1}\left\langle G_{k,l,n}, \,\mu P((j-1)k)\right\rangle
& \leq \frac{1}{kl}\sum_{n=0}^{k-1}\sum_{j=1}^{l}D^{1/2}\left(\frac{1}{\sqrt{kl}}\left(\hat{A}+\hat{B}n\right)\right)^{1/2}\\
& \leq \left(kl\right)^{-1/4}D^{1/2}\left(\hat{A}+\hat{B} (k-1)\right)^{1/2}
\end{align*}
for any $k,l\in\n$. This, in turn, implies that
$$
\lim_{k\to\infty}\limsup_{l\to\infty} \frac{1}{kl}\sum_{j=1}^l\sum_{n=0}^{k-1}\left\langle G_{k,l,n}, \,\mu P((j-1)k)\right\rangle=0,
$$
which completes the proof.

\end{proof}

\subsection{Finalization of the proof}

Armed with the results obtained in the previous subsections, we are now in a position to
finalize the proof of the main theorem.
\begin{proof}[Proof of Theorem \ref{thm:main}]
First of all, according to Lemma \ref{lem:inv_exist}, $\{P(t)\}_{t\in\mathbb{R}_+}$ admits a unique probability measure $\mu_*$, which belongs to $\mathcal{M}_{1,2}^V(E)$. Further, taking into account identity \eqref{eq:martingale+rest} and Lemma \ref{lem:rest}, along with insights from Lemma \ref{lem:suffices} and Remark \ref{rem:m1}, it suffices to prove that the CLT holds (with $\sigma^2$ given by \eqref{def:sigma}) for the sequence $\{M(n)\}_{n\in\n_0}$, determined by \eqref{def:M}, whenever the initial distribution $\mu$ of $\Psi$ belongs to $\mathcal{M}_{1,2}^V(E)$. Since, due to Lemma~\ref{lem:martingale}, $\{M(n)\}_{n\in\n_0}$ is a~martingale and $\sigma^2<\infty$, the proof of that  reduces to verifying conditions \ref{cnd:m1}-\ref{cnd:m3} of Theorem~\ref{thm:martingale} (with $\pr=\pr_{\mu}$). This, in turn, has been done in Lemmas \ref{lem:M1}--\ref{lem:M3}, respectively. The~proof of Theorem~\ref{thm:main} is therefore complete.
\end{proof}

\section{A representation of $\sigma^2$}\label{sec:4}
{
In this section, we provide a relatively simple representation of the variance $\sigma^2$, involved in~\hbox{Theorem~\ref{thm:main}}. The line of the reasoning presented below draws heavily from ideas of~\cite{Bhattacharya}.

Suppose that the semigroup $\{P(t)\}_{t\in\mathbb{R}_+}$ enjoys conditions \ref{(h0)}-\ref{(h2)} with a continuous function \hbox{$V:E\to\mathbb{R}_+$}, and let $\mu_*$ denote its unique invariant probability measure. \hbox{Further}, consider the space $\mathbb{L}:=\mathcal{L}^2(\mu_*)$ of all Borel measurable and $\mu_*$-square integrable functions from $E$ to $\mathbb{R}$ (precisely, the corresponding quotient space under the relation of~$\mu_*$-a.e. \hbox{equality}), endowed with the norm\vspace{-0.1cm}
$$\norma{f}_2:=\<f^2,\mu_*\>^{1/2} \quad\text{for}\quad f\in\mathbb{L}.\vspace{-0.1cm}$$
Since, given $p\in\{1,2\}$, $f\in \mathbb{L}$, and $t\in\mathbb{R}_+$, we have $\<P(t)(|f|^p),\mu_*\>=\<|f|^p,\mu_*\><\infty$, it~follows that $\langle |f|^p, \delta_xP(t)\rangle =P(t)(|f|^p)(x)<\infty$ for $\mu_*\text{-a.e. } x\in E$. Thus, bearing in mind \eqref{def:Pf}, we can identify $P(t)f^p$ as a~real-valued Borel measurable function that coincides with the map $x\mapsto \<f^p,\delta_x P(t)\>$ on some Borel set of full measure $\mu_*$ where $P(t)(|f|^p)<\infty)$, and is zero outside this set. Moreover, accounting for this identification and the fact that\vspace{-0.1cm}
$$\<(P(t)f)^2,\mu_*\>\leq \<P(t)f^2,\mu_*\>=\<f^2,\mu_*\><\infty,$$
we see that $P(t)f\in\mathbb{L}$ and $\norma{P(t)f}_2\leq \norma{f}_2$. Consequently, $\{P(t)\}_{t\in\mathbb{R}_+}$ can be viewed as a~contraction semigroup on $\mathbb{L}$.

Now, let $\mathbb{L}_0$ denote the center of the semigroup $\{P(t)\}_{t\in\mathbb{R}_+}$ on $\mathbb{L}$, that is
}
$$\mathbb{L}_0:=\left\{f\in\mathbb{L}:\, \lim_{t\to 0^+} \norma{P(t) f - f}_2=0 \right\}.$$
Then, the infinitesimal generator $A$ of $\{P(t)\}_{t\in\mathbb{R}_+}$ can be defined on the domain
$$D_A:=\left\{f\in\mathbb{L}_0:\, \lim_{t\to 0^+} \norma{\frac{P(t) f - f}{t}-\hat{f}\,}_2=0\;\;\text{for some}\;\;\hat{f}\in\mathbb{L}_0 \right\}$$
by
$$
Af:=\lim_{t\to 0^+}\frac{P(t) f - f}{t}\;\;\text{in}\;\;\norma{\cdot}_2\quad\text{for}\quad f\in D(A).
$$

Before we state the main result of this section, let us observe that the corrector function~$\chi$, given by \eqref{def:chi}, belongs to $\mathbb{L}$. { To see this, it suffices to note that Remark~\ref{rem:2} (applied with $\mu=\mu_*$), together with Corollary~\ref{lem:sup_t}, yields that
$$
\<\chi^2, \mu_*\>=\ew_{\mu_*}\left(\chi^2(\Psi(0)) \right)\leq  \left(\frac{\norma{g}_{\mathrm{BL}}}{\gamma}\right)^2 \<\mathcal{C}^2,\mu_*\><\infty.
$$}
Having established this, we can now prove the following:
\begin{theorem}\label{thm:rep_sigma}
Suppose that $\{P(t)\}_{t\in\mathbb{R}_+}$ satisfies hypotheses \ref{(h0)}--\ref{(h2)} with some \hbox{continuous} \hbox{$V:E\to\mathbb{R}_+$}. Then $\sigma^2$, given by \eqref{eq:sigma}, takes the form
\begin{equation}\label{e:rep_sigma}
\sigma^2=-2\<\chi A \chi,\mu_*\>=2\< \chi\bar{g},\mu_*\>,
\end{equation}
whenever $g\in \operatorname{Lip}_b(E)$, involved in \eqref{def:chi}, is such that, for every $x\in E$, the map $t\mapsto P(t)g(x)$ is continuous at $t=0$. 
\end{theorem}
\begin{proof}
First of all, observe that $\chi\in D_A$ and $A\chi=-\bar{g}$. To show this, note that
\begin{align*}
\norma{\frac{P(t)\chi - \chi}{t}-(-\bar{g})\,}_2^2&=\int_E \left(\frac{1}{t}\left(\int_t^{\infty} P(s)\bar{g}(x)\,ds-\int_0^{\infty} P(s)\bar{g}(x)\,ds \right) +\bar{g}(x)\right)^2\mu_*(dx)\\
&=\int_E \left(\bar{g}(x)-\frac{1}{t}\int_0^t P(s)\bar{g}(x)\,ds \right)^2\mu_*(dx)\quad\text{for all}\quad t>0.
\end{align*}
In view of the continuity at $0$ of $P(\cdot)g(x)$, $x\in E$, we have (see, e.g., \cite[Lemma 5.5.1]{heil})
$$\lim_{t\to 0^+} \frac{1}{t}\int_0^t P(s)\bar{g}(x)\,ds=\bar{g}(x)\quad\text{for any}\quad x\in E.$$
Hence, using the Lebesgue dominated convergence theorem, we can conclude that
$$\lim_{t\to 0^+} \norma{\frac{P(t)\chi - \chi}{t}-(-\bar{g})\,}_2^2=0,$$
which implies the desired claim.

Now, for each $n\in\n$, consider the sequence $\{Z_n(k)\}_{k\leq n}$ of the increments of $\{M(k/n)\}_{k\leq n}$, defined by
$$Z_n(k):=M\left(\frac{k}{n}\right)-M\left(\frac{k-1}{n}\right)\quad\text{for}\quad k\in\{1,\ldots,n\}.$$
Let $n\in\n$ be arbitrarily fixed. Since $\{M(t)\}_{t\in\mathbb{R}_+}$ is a martingale with respect to the natural filtration of $\Psi$, and $\mu_*$ is an invariant distribution of $\Psi$, it follows that $\{Z_n(k)\}_{k\leq n}$ forms a~sequence of pairwise orthogonal and identically distributed random variables on $(\Omega,\mathcal{F}_{\infty},\pr_{\mu_*})$ with mean $0$. In view of this, we have 
\begin{equation}\label{eq:dc0}
\sigma^2=\ew_{\mu_*}\left(M^2(1)\right)=\sum_{k=1}^n \ew_{\mu_*}\left(Z_n^2(k)\right)=n\ew_{\mu_*}\left(Z_n^2(1)\right)=n\ew_{\mu_*}\left(M^2\left(\frac{1}{n}\right)\right),
\end{equation}
and the expectation on the right-hand side can be expressed as
\begin{align}\label{eq:dc1}
\begin{split}
\ew_{\mu_*}\left(M^2\left(\frac{1}{n}\right)\right)&
=\ew_{\mu_*}\left( \left(\chi(\Psi(1/n))-\chi(\Psi(0))\right)^2\right)+\ew_{\mu_*}\left(\left(\int_0^{1/n}\bar{g}(\Psi(s))ds \right)^2\right)\\
&\quad+2\ew_{\mu_*}\left(\left(\chi(\Psi(1/n))-\chi(\Psi(0))\right)\int_0^{1/n}\bar{g}(\Psi(s))ds\right).
\end{split}
\end{align}

Let us define
$$\epsilon_n:=A\chi-\frac{P(1/n)\,\chi-\chi}{1/n}.$$
Then, keeping in mind \eqref{eq:Ef} and taking into account that
$$P(1/n)\chi=\chi+\frac{1}{n}A\chi-\frac{1}{n}\epsilon_n,$$
we can write
\begin{align*}
\ew_x\left( \left(\chi(\Psi(1/n))-\chi(\Psi(0))\right)^2\right)&=P(1/n)\chi^2(x)+\chi^2(x)-2\chi(x)P(1/n)\chi(x)\\
&=P(1/n)\chi^2(x)+\chi^2(x)-2\chi(x)\left(\chi(x)+\frac{1}{n}A\chi(x)-\frac{1}{n}\epsilon_n(x) \right)\\
&=P(1/n)\chi^2(x)-\chi^2(x)-\frac{2}{n}\chi(x)A\chi(x)+\frac{2}{n}\chi(x)\epsilon_n(x)
\end{align*}
{for $\mu_*\text{-a.e. } x\in E$}. This, together with the invariance of $\mu_*$, shows that the first term on the right-hand side of \eqref{eq:dc1} can be expressed as
\begin{equation}
\label{eq:dc2}
\ew_{\mu_*}\left( \left(\chi(\Psi(1/n))-\chi(\Psi(0))\right)^2\right)=-\frac{2}{n}\left(\<\chi A\chi,\mu_*\>-\<\chi \epsilon_n,\mu_*\>\right).
\end{equation}
Hence, putting
$$
\delta_n:=\ew_{\mu_*}\left(\left(\int_0^{1/n}\bar{g}(\Psi(s))ds \right)^2\right),\quad \Delta_n:=\ew_{\mu_*}\left(\left(\chi(\Psi(1/n))-\chi(\Psi(0))\right)\int_0^{1/n}\bar{g}(\Psi(s))ds  \right),
$$
we can write \eqref{eq:dc1} as
$$\ew_{\mu_*}\left(M^2\left(\frac{1}{n}\right)\right)=-\frac{2}{n}\left(\<\chi A\chi,\mu_*\>-\<\chi \epsilon_n,\mu_*\>\right)+\delta_n+2\Delta_n.$$
Consequently, \eqref{eq:dc0} then takes the form
$$\sigma^2=-2\left(\<\chi A\chi,\mu_*\>-\<\chi \epsilon_n,\mu_*\>\right)+n\delta_n+2n\Delta_n.$$

{Since $\chi,\epsilon_n\in\mathbb{L}$} for every $n\in\n$, we can apply the Cauchy-Schwarz inequality to obtain
$$|\<\chi \epsilon_n,\mu_*\>|\leq \norma{\chi}_2\norma{\epsilon_n}_2\quad\text{for any}\quad n\in\n,$$
which, together with the fact that $\lim_{n\to\infty}\norma{\epsilon_n}_2=0$, yields that
\begin{equation}\label{eq:dc3}
\lim_{n\to\infty} \<\chi \epsilon_n,\mu_*\>=0.
\end{equation}

What is now left is to prove that also the sequences $(n\delta_n)_{n\in\n}$ and $(n\Delta_n)_{n\in\n}$ tend to $0$ as $n\to\infty$. Note that, for every $n\in\mathbb{N}$, we have
\begin{equation}\label{eq:dc4}
\delta_n\leq \frac{1}{n^2}\norma{\bar{g}}_{\infty}^2,
\end{equation}
which means that $0\leq n\delta_n\leq n^{-1}\norma{\bar{g}}_{\infty}^2$ for all $n\in\n$, whence $n\delta_n\to 0$ as $n\to\infty$. Moreover, using sequentially the Cauchy-Schwarz inequality, \eqref{eq:dc2} and \eqref{eq:dc4}, we can conclude that, for each $n\in\n$,
\begin{align*}
|\Delta_n|&\leq \left(\ew_{\mu_*}\left( \left(\chi(\Psi(1/n))-\chi(\Psi(0))\right)^2\right)\right)^{1/2}\left(\ew_{\mu_*}\left(\left(\int_0^{1/n}\bar{g}(\Psi(s))ds \right)^2\right) \right)^{1/2}\\
&=\left(-\frac{2}{n}\left(\<\chi A\chi,\mu_*\>-\<\chi \epsilon_n,\mu_*\>\right)\right)^{1/2}\delta_n^{1/2}
\leq \frac{\sqrt{2}\norma{\bar{g}}_{\infty}}{n^{3/2}}\left(-\<\chi A\chi,\mu_*\>+\<\chi \epsilon_n,\mu_*\>\right)^{1/2},
\end{align*}
which gives
$$|n\Delta_n|\leq \frac{\sqrt{2}\norma{\bar{g}}_{\infty}}{n^{1/2}}\left(-\<\chi A\chi,\mu_*\>+\<\chi \epsilon_n,\mu_*\>\right)^{1/2} \quad\text{for all}\quad n\in\n.$$
This, in conjunction with \eqref{eq:dc3}, shows that $n\Delta_n\to 0$ as $n\to\infty$, and therefore the proof is complete.
\end{proof}
{
\section{A note on the functional CLT in the stationary case}\label{sec:fclt}

Although investigating the functional central limit theorem (FCLT), also known as \emph{Donsker's invariance principle}, is not the primary focus of this paper, it is worth to note that, under the employed assumptions, \hbox{\cite[Theorem 2.1]{Bhattacharya}} implies the validity of such a theorem in the case where $\Psi$ is stationary.

To present a relevant result, upon assuming $\mu_*$ to be the unique invariant probability measure of $\{P(t)\}_{t\in\mathbb{R}_+}$, and given $g\in \operatorname{Lip}_b(E)$, consider the sequence $\big\{\Gamma_g^{(n)}\big\}_{n\in\n}$ of stochastic processes defined by
$$\Gamma_{\bar{g}}^{(n)}(t):=\frac{1}{\sqrt{n}}\int_0^{nt} \bar{g}(\Psi(s))\,ds \quad\text{for}\quad t\geq 0,\;n\in\n, $$
with $\bar{g}=g-\<g,\mu_*\>$. 
Further, let $\operatorname{C}_0(\mathbb{R}_+)$ be the space of all real-valued continuous functions on $\mathbb{R}_+$ starting at zero, endowed with the topology of uniform convergence on compact sets. Since $g$ is bounded, it is easily seen that the processes $\Gamma_{\bar{g}}^{(n)}$, $n\in\n$, can be regarded as random variables with values in $\operatorname{C}_0(\mathbb{R}_+)$. Apart from that, let $\mathbb{W}_{\sigma}$ stand for the Wiener measure on (the Borel $\sigma$-field~of) $\operatorname{C}_0(\mathbb{R}_+)$ with zero drift and variance $\sigma^2$. 

When $\mu$ is the initial distribution of $\Psi$, the process $\{\bar{g}(\Psi(t))\}_{t\in\mathbb{R}_+}$ is said to obey the FCLT if, for some $\sigma\geq 0$, the distributions of $\Gamma_{\bar{g}}^{(n)}$ on $\operatorname{C}_0(\mathbb{R}_+)$ (under $\pr_{\mu}$) converge weakly to the Wiener measure $\mathbb{W}_{\sigma}$, i.e.,
\begin{equation}
\label{e:fclt}
\lim_{n\to \infty} \ew_{\mu} \,h\left( \Gamma_{\bar{g}}^{(n)}\right)=\int_{\operatorname{C}_0(\mathbb{R}_+)}h\,d\mathbb{W}_{\sigma} \quad\text{for any bounded continuous}\;\;  h:\operatorname{C}_0(\mathbb{R}_+)\to\mathbb{R}.
\end{equation}

\begin{proposition}\label{prop:FCLT}
Suppose that $\{P(t)\}_{t\in\mathbb{R}_+}$ satisfies hypotheses \ref{(h0)}--\ref{(h2)} with some continuous \hbox{$V:E\to\mathbb{R}_+$}, and that $t\mapsto P(t)g(x)$ is continuous at $t=0$ for every $x\in E$. Furthermore, assume that $\Psi$ is progressively measurable and stationary, i.e., its initial \hbox{distribution} is the (unique) invariant one $\mu_*$. Then, for every $g\in \operatorname{Lip}_b(E)$, the process $\{\bar{g}(\Psi(t))\}_{t\in\mathbb{R}_+}$ obeys the FCLT with $\sigma^2$ given by \eqref{e:rep_sigma}, that is, \eqref{e:fclt} holds with $\mu=\mu_*$.
\end{proposition}
\begin{proof}
First of all, by Lemma \ref{lem:inv_exist}, the semigroup $\{P(t)\}_{t\in\mathbb{R}_+}$ admits a unique invariant measure $\mu_*\in\mathcal{M}_1(E)$ and, therefore, $\Psi$ is ergodic in the sense of \cite{Bhattacharya}; i.e., $\pr_{\mu_*}(F)\in\{0,1\}$ for every $F\in\mathcal{F}_{\infty}$ satisfying $\Theta_t^{-1}(F)=F$ for all $t>0$.  Further, let $A: D_A\to\mathbb{L}_0$ be the~infinitesimal generator of $\{P(t)\}_{t\in\mathbb{R}_+}$ defined in Section \ref{sec:4}, and let $\chi$ denote the corrector function, specified by \eqref{def:chi}. From the first step of the proof of Theorem~\ref{thm:rep_sigma} it follows that $-\chi\in D_A$ and \hbox{$\bar{g}=A(-\chi)$}. This, in particular, means that $\bar{g}$ is in the range of $A$. Hence, we now see that the assertion follows directly from \cite[Theorem 2.1]{Bhattacharya}.
\end{proof}
}
\section{An example of application to some PDMPs}\label{sec:ex}
{We shall end this paper by applying our main result, i.e., Theorem \ref{thm:main} (and Proposition~\ref{prop:FCLT}) to establish the CLT (and the FCLT in the stationary case) for the PDMPs considered in~\cite{ergodic_pdmp}}. More specifically, we will be concerned with a~process involving a deterministic motion governed by a finite number of semiflows, which is punctuated by random jumps, occurring in independent and exponentially distributed time intervals $\Delta\tau_n$ with the same rate $\lambda$. The state right after a jump will depend randomly on the one immediately preceding this jump, and its probability distribution will be
governed by an arbitrary transition law~$J$. 

Let $(Y,\rho_Y)$ be a complete separable metric space, and let $I$ be a finite set, endowed with the discrete metric $\dd$, i.e., $\dd(i,j)=1$ for $i\neq j$ and $\dd(i,j)=0$ otherwise. Moreover, put $X:=Y\times I$, $\bar{X}:=X\times\mathbb{R}_+$, and let $\rho_{X,c}$ denote the metric in $X$ defined by
\begin{equation}\label{def:rho_c}
\rho_{X,c}(x_1,x_2)=\rho_Y(y_1,y_2)+c\,\dd(i_1,i_2)\quad\text{for}\quad x_1=(y_1,i_1),\,x_2=(y_2,i_2)\in X,
\end{equation}
where $c$ is a given positive constant.

Further, suppose that we are given an arbitrary stochastic kernel $J$ on $Y\times \mathcal{B}(Y)$ and some constant $\lambda>0$. In addition to that, consider a stochastic matrix $\{\pi_{ij}\}_{i,j\in I}\subset\mathbb{R}_+$ such that $\min_{i\in I} \pi_{ij_0}>0$ for some $j_0\in I$ and a collection $\{S_i\}_{i\in I}$ of (jointly) continuous semiflows from $\mathbb{R}_+\times Y$ to~$Y$. By saying that $S_i$ is a \emph{semiflow} we mean as usual that
$$S_i(s,S_i(t,y))=S_i(s+t,y)\quad\text{and}\quad S_i(0,y)=y\quad\text{for any}\quad s,t\in\mathbb{R}_+,\;y\in Y.$$ 
Obviously, in practical applications, one usually deals with semiflows generated by unique solutions to certain particular Cauchy problems for autonomous differential equations (see, e.g., \cite[Examples~7.2, 7.3]{ergodic_pdmp}), and their properties are investigated through the operators involved in these equations (such as, e.g., smooth vector fields on $\mathbb{R}^d$ in \cite[\S 5]{b:benaim1} or \cite[\S 4]{b:benaim2}).

We shall investigate a stochastic process $\Psi:=\{(Y(t),\xi(t))\}_{t\in\mathbb{R}_+}$, evolving on the space $(X,\rho_{X,c})$ in such a way that
\begin{equation}\label{def:Pt}
Y(t)=S_{\xi(\tau_n)}(t-\tau_n, Y(\tau_n))\;\;\;\text{and}\;\;\;\xi(t)=\xi(\tau_n)\quad\text{for}\quad t\in [\tau_n,\tau_{n+1}),\;n\in\n_0,
\end{equation}
and $\bar{\Phi}:=\{(Y(\tau_n),\xi(\tau_n),\tau_n)\}_{n\in\n_0}=\{(\Psi(\tau_n),\tau_n)\}_{n\in\n_0}$ is an $\bar{X}$-valued time-homogeneous Markov chain with one-step transition law $\bar{P}:\bar{X}\times\mathcal{B}(\bar{X})\to [0,1]$ given by
\begin{equation}
\label{def:P_bar}
\bar{P}((y,i,s), \bar{A})=\sum_{j\in I} \pi_{ij} \int_0^{\infty} \lambda e^{-\lambda t}\int_Y \mathbbm{1}_{\bar{A}}(u,j,t+s)\,J(S_i(t,y),du)\,dt
\end{equation}
for any $y\in Y$, $i\in I$, $s\in\mathbb{R}_+$ and $\bar{A}\in\mathcal{B}(\bar{X})$.

Obviously, $\Phi:=\{\Psi(\tau_n)\}_{n\in \n_0}$, $\{\xi(\tau_n)\}_{n\in\n_0}$ and $\{\tau_n\}_{n\in\n_0}$ are then also Markov chains w.r.t. their own natural filtrations, and, for every $n\in\mathbb{N}_0$, their transition laws satisfy 
\begin{gather}
\pr_{\bar{\mu}}(\Psi(\tau_{n+1})\in A\,|\,\Psi(\tau_n)=x)
=\bar{P}\left((x,0),A\times\mathbb{R}_+\right)
\quad\text{for}\quad x\in X,\; A\in\mathcal{B}(X),\nonumber\\
\pr_{\bar{\mu}}(\xi(\tau_{n+1})=j\,|\,\xi(\tau_n)=i)=\pi_{ij}\quad\text{for}\quad i,j\in I,\nonumber\\
\label{e:tau}\pr_{\bar{\mu}}(\tau_{n+1}\leq t\,|\,\tau_n=s)=\mathbbm{1}_{[s,\infty)}(t)\left(1-e^{-\lambda(t-s)}\right)\quad\text{for}\quad s,t\in\mathbb{R}_+,
\end{gather}
where $\bar{\mu}=\mu\otimes\delta_0$, and $\mu\in\mathcal{M}_1(X)$ stands for the initial measure of $\Phi$ (and thus of $\Psi$). Importantly, \eqref{e:tau} implies that the increments $\Delta\tau_n:=\tau_{n}-\tau_{n-1}$, $n\in\n$, form a sequence of independent and exponentially distributed random variables with the same rate $\lambda$, and therefore $\tau_n\uparrow\infty$, as $n\to\infty$, $\pr_{\bar{\mu}}$-a.s. (which, in turn, yields that \eqref{def:Pt} is well-defined).

It is not hard to check that $\Psi$, defined as above, is a (piecewise-deterministic) time-homogeneous Markov process. Clearly, such a process is {progressively} measurable, as it has right-continuous sample paths. In the remainder of the paper, $\{P(t)\}_{t\in\mathbb{R}_+}$ will stand for the transition semigroup of this process. Let us emphasize here that, since $\{P(t)\}_{t\in\mathbb{R}_+}$ is stochastically continuous (at $t=0$) by \cite[Lemma 5.1 (iii)]{ergodic_pdmp}, it indeed satisfies also the last condition required in the definition of transition semigroup adopted in this paper, i.e., the map
$(t,x)\mapsto P(t)(x,A)$ is $\mathcal{B}(\mathbb{R}_+\times E)/\mathcal{B}(\mathbb{R})$-measurable (see \cite[Proposition 3.4.5]{worm_thesis}).

In \cite{ergodic_pdmp}, we have proposed conditions (J1), (J2) on the kernel $J$ and (S1)-(S3) on the semiflows~$S_i$ under which $\{P(t)\}_{t\in\mathbb{R}_+}$ enjoys hypothesis \ref{(h1)} with $\rho=\rho_{X,c}$, specified by~\eqref{def:rho_c}, and $V$ given by
\begin{equation}\label{def:V}
V(x):=\rho_{Y}(y,y^*)\quad\text{for any}\quad x:=(y,i)\in X,
\end{equation}
with some arbitrarily fixed $y^*\in Y$, provided that the constants involved in  \hbox{\cite[(J1) and (S2)]{ergodic_pdmp}} are interrelated by inequality \hbox{\cite[(4.8)]{ergodic_pdmp}}, and that $c$ is sufficiently large. More precisely, this follows from \hbox{\cite[Lemma 6.2]{ergodic_pdmp}}, applied together with \cite[Proposition 7.1]{ergodic_pdmp}. Moreover, statement (i) of \cite[Lemma 5.1]{ergodic_pdmp} yields that, if $J$ is Feller, then so is $\{P(t)\}_{t\in\mathbb{R}_+}$, i.e., \ref{(h0)}~holds.

To apply Theorem \ref{thm:main} {(and Proposition \ref{prop:FCLT})}, it therefore suffices to verify when \ref{(h2)} is met. We will show that this hypothesis does hold upon assuming the following conditions:\vspace{-0.1cm}
\begin{enumerate}[label=\textnormal{(J1')}]
\item\label{j1'} There exist $a,b\geq 0$ for which
$$J\rho_Y^2(\cdot,y^*)(y)\leq a \rho_Y^2(y,y^*)+b\quad \text{for}\quad y\in Y,$$
\end{enumerate}
\begin{enumerate}[label=\textnormal{(S0)}]
\item\label{s0} There exist $R,M\geq 0$ and $N\in \n$ such that
$$\rho_Y(S_i(t,y), y^*)\leq R\,\rho_Y(y,y^*)+M\left(t^N+1\right)\quad\text{for}\quad y\in Y,\;t\geq 0,\; i\in I;\vspace{-0.1cm}$$\vspace{-0.8cm}
\end{enumerate}
with $2aR^2<1$. Obviously, \cite[(J1)]{ergodic_pdmp} can be derived from \ref{j1'} by using the H\"older inequality, which shows that the former holds with $\tilde{a}:=\sqrt{a}$ and $\tilde{b}:=\sqrt{b}$. At the end of this section, we will also demonstrate how to link (S0) with the aforementioned hypotheses \cite[(S1)-(S3)]{ergodic_pdmp}.

\begin{lemma}\label{lem:gen-lap}
Suppose that \ref{j1'} and \ref{s0} hold with $a,R\geq 0$ such that $2aR^2<1$. Then there exist constants ${\Gamma}>0$ and $C\geq 0$ such that, for every $t_0\geq 0$ and the function \hbox{$U_{t_0}:\bar{X}\to \mathbb{R}_+$} given by
\begin{equation}
\label{df:u_t}
U_{t_0}(x,t):=e^{-\lambda (t_0-t)}V^2(x)\mathbbm{1}_{[0,t_0]}(t)\quad\text{for}\quad x\in X,\;t\geq 0,
\end{equation}
we have
$$
\sum_{n=0}^{\infty} \bar{P}^n U_{t_0}(x,0)\leq e^{-{\Gamma} t_0} V^2(x)+C\quad\text{for all} \quad x\in X.
$$
\end{lemma}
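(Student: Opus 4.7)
The plan is to iterate $\bar{P}$ on $U_{t_0}$ and establish, by induction on $n\ge 0$, the pointwise bound
\begin{equation*}
\bar{P}^n U_{t_0}(x,s) \le e^{-\lambda(t_0-s)}\bigl[A_n(t_0-s)V^2(x)+B_n(t_0-s)\bigr]\mathbbm{1}_{[0,t_0]}(s)\quad\text{for all}\quad (x,s)\in\bar{X},
\end{equation*}
with explicit scalar functions $A_n,B_n\ge 0$ on $\mathbb{R}_+$, and then sum the resulting inequalities over $n$, evaluated at $s=0$, $u:=t_0-s=t_0$.

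From \ref{s2}, \ref{s1}, the triangle inequality and $(a_1+a_2)^2\le 2a_1^2+2a_2^2$, one gets $\rho_Y^2(S_i(t,y),y^*)\le 2L^2 V^2(x)+2M^2 t^{2\zeta}$ for every $x=(y,i)\in X$ and $t\ge 0$. Combined with \ref{j1}, this yields $\int_Y\rho_Y^2(u,y^*)\,J(S_i(t,y),du)\le \alpha V^2(x)+2aM^2 t^{2\zeta}+b$, where $\alpha:=2aL^2\in(0,1)$ by \eqref{e:balance}. Plugging this into a single application of $\bar{P}$ via \eqref{def:P_bar}, using $\sum_j\pi_{ij}=1$ and the clean cancellation $e^{-\lambda t}\cdot e^{-\lambda(t_0-s-t)}=e^{-\lambda(t_0-s)}$, drives the induction with $A_0\equiv 1$, $B_0\equiv 0$ and
\begin{equation*}
A_{n+1}(u)=\lambda\alpha\!\int_0^u\! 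A_n(r)\,dr,\quad B_{n+1}(u)=\lambda\!\int_0^u\!\bigl[A_n(r)(2aM^2(u-r)^{2\zeta}+b)+B_n(r)\bigr]\,dr.
\end{equation*}
The first recursion solves explicitly as $A_n(u)=(\lambda\alpha u)^n/n!$, so $\sum_{n\ge 0}A_n(u)=e^{\lambda\alpha u}$. Summing the $B_n$-recursion (using $B_0=0$ to match indices) shows that $B(u):=\sum_{n\ge 0}B_n(u)$ satisfies the linear Volterra equation $B(u)=\psi(u)+\lambda\int_0^u B(r)\,dr$ with $\psi(u):=\lambda\int_0^u e^{\lambda\alpha r}(2aM^2(u-r)^{2\zeta}+b)\,dr$, whose explicit solution is $B(u)=\psi(u)+\lambda\int_0^u e^{\lambda(u-s)}\psi(s)\,ds$.

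It then suffices to verify that $e^{-\lambda u}B(u)$ is bounded uniformly in $u\ge 0$. The change of variable $v=u-r$ gives $\psi(u)=\lambda e^{\lambda\alpha u}\int_0^u e^{-\lambda\alpha v}(2aM^2 v^{2\zeta}+b)\,dv\le K e^{\lambda\alpha u}$, where $K<\infty$ is expressible via $\Gamma(2\zeta+1)$ because the integral converges on $[0,\infty)$ thanks to $\lambda\alpha>0$. Hence $e^{-\lambda u}\psi(u)\le K e^{-\lambda(1-\alpha)u}\le K$ and $\lambda\int_0^u e^{-\lambda s}\psi(s)\,ds\le \lambda K\int_0^\infty e^{-\lambda(1-\alpha)s}\,ds=K/(1-\alpha)$, so $e^{-\lambda u}B(u)\le C$ for a finite $u$-independent $C$. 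Putting everything together at $s=0$, $u=t_0$ yields $\sum_{n\ge 0}\bar{P}^n U_{t_0}(x,0)\le e^{-\lambda t_0}\bigl(e^{\lambda\alpha t_0}V^2(x)+B(t_0)\bigr)\le e^{-\Gamma t_0}V^2(x)+C$ with $\Gamma:=\lambda(1-2aL^2)>0$. The main obstacle is the polynomially-growing term $2M^2 t^{2\zeta}$ inherited from \ref{s1}, which injects ``noise'' of growing order into every iteration; the strict contraction $\alpha<1$ guaranteed by \eqref{e:balance} is precisely what forces both the $V^2$-contribution to decay at rate $\Gamma$ and the $B$-series to remain summable after multiplication by $e^{-\lambda u}$.
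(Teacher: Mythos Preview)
Your proof is correct and arrives at exactly the same decay rate $\Gamma=\lambda(1-2aL^2)$ as the paper, via the same overall strategy: iterate $\bar{P}$, track how the $V^2$-coefficient contracts geometrically (by $\alpha=2aL^2$ per iteration, convoluted with the exponential jump law), and control the accumulated additive noise from \ref{s1}. The technical bookkeeping, however, differs. The paper first replaces $t^{2\zeta}$ by $t^m+1$ with $m:=\lceil 2\zeta\rceil$, which allows an explicit closed-form induction hypothesis involving only the monomials $(t_0-s)^n/n!$ and $(t_0-s)^{m+n}/(m+n)!$; the beta-function identity $\int_0^T t^k(T-t)^l\,dt=k!\,l!\,T^{k+l+1}/(k+l+1)!$ then propagates these cleanly, and the final sum collapses to two exponential series. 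You instead keep $t^{2\zeta}$ intact, encode the iteration abstractly through the recursions for $A_n,B_n$, solve $A_n$ explicitly, and handle $\sum_n B_n$ via the Volterra resolvent formula and a Gamma-function bound on $\int_0^\infty e^{-\lambda\alpha v}v^{2\zeta}\,dv$. Your route is slightly more general (it would accommodate a wider class of growth rates in \ref{s1} without the integer-rounding trick) and avoids the somewhat ad hoc passage to $t^m$; the paper's route is more elementary in that it needs only the beta integral and power series, with no ODE/Volterra machinery. One small point worth making explicit in your write-up: the interchange of $\sum_n$ and $\int_0^u$ when deriving the Volterra equation for $B$ is justified by monotone convergence (all terms are nonnegative), and finiteness of $B(u)$ follows a posteriori from the explicit resolvent formula, or a priori by noting that the partial sums $S_N=\sum_{n\le N}B_n$ satisfy $S_N(u)\le\psi(u)+\lambda\int_0^u S_{N-1}(r)\,dr$ and are therefore dominated by the unique solution of the Volterra equation.
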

\begin{proof}
Fix $t_0\geq 0$, and define $\eta:=2a R^2<1$ and $m:=2N$ with $N$ given in \ref{s0}. Then, using sequentially \ref{j1'}, \ref{s0} and \eqref{simple_fact}, we infer that, for any $(y,i)\in X$ and~$t\geq 0$,
\begin{align}
\begin{split}
\label{lem:e1}
J\rho_Y^2(\cdot,y^*)(S_i(t,y))&\leq a\rho_Y^2(S_i(t,y),y^*)+b
\leq 2 a\left(R^2\rho_Y^2(y,y^*)+2M^2\left(t^{2N}+1\right)\right)+b\\
&= 2 a R^2 \rho_Y^2(y,y^*)+4 a M^2 t^m+(4a M^2+b)\\
&\leq \eta \rho_Y^2(y,y^*)+D\left(\frac{t^m}{m!}+1\right)\quad\text{with}\quad D:=4 a M^2 m!+b\geq 0.
\end{split}
\end{align}\vspace*{-0.3cm}

In what follows, we will show inductively that for every $n\in\n$\vspace{-0.1cm}
\begin{align}
\begin{split}
\label{lem:e2}
\bar{P}^n U_{t_0}(y,i,s)&\leq \lambda^n e^{\lambda(t_0-s)}\left( \left(\eta^n \rho_Y^2(y,y^*)+D\sum_{k=0}^{n-1}\eta^k \right)\frac{(t_0-s)^n}{n!}\right.\\
&\quad\left.+D\left(\sum_{k=0}^{n-1}\eta^k\right)\frac{(t_0-s)^{m+n}}{(m+n)!}\right)\mathbbm{1}_{[0,t_0]}(s)\quad\text{for all}\quad (y,i,s)\in \bar{X}.
\end{split}
\end{align}
For $n=1$ and any $(y,i,s)\in \bar{X}$ we get
\begin{align*}
\bar{P}U_{t_0}(y,i,s)&=\sum_{j\in I} \pi_{ij}\int_0^{\infty}\lambda e^{-\lambda t} \int_Y U_{t_0}(u,j,s+t)\,J(S_i(t,y),du)\,dt\\
&=\int_0^{\infty}\lambda e^{-\lambda t}\int_Y e^{-\lambda(t_0-s-t)}\rho_Y^2(u,y^*)\mathbbm{1}_{[0,t_0]}(s+t)J(S_i(t,y),du)\,dt\\
&=\lambda e^{-\lambda(t_0-s)}\int_0^{\infty} J\rho_Y^2(\cdot,y^*)(S_i(t,y))\mathbbm{1}_{[0,t_0]}(s+t)\,dt,
\end{align*}
and therefore from \eqref{lem:e1} it follows that, for $s\leq t_0$,
\begin{align*}
\bar{P}U_{t_0}(y,i,s)&\leq \lambda e^{-\lambda(t_0-s)}\int_0^{t_0-s}\left(\eta\rho_Y^2(y,y^*)+D\left(\frac{t^m}{m!}+1 \right) \right)\,dt\\
&=\lambda e^{-\lambda(t_0-s)}\left(\eta\rho_Y^2(y,y^*)(t_0-s)+D\frac{(t_0-s)^{m+1}}{(m+1)!} + D(t_0-s)\right)\\
&=\lambda e^{-\lambda(t_0-s)}\left(\left(\eta\rho_Y^2(y,y^*)+D \right)(t_0-s) +D\frac{(t_0-s)^{m+1}}{(m+1)!}\right),
\end{align*}
and $\bar{P}U_{t_0}(y,i,s)=0$ for $s>t_0$.  Hence \eqref{lem:e2} holds with $n=1$. Now, suppose that~\eqref{lem:e2} is fulfilled with some arbitrarily fixed $n\in\n$. Then, by identity $\bar{P}^{n+1}U_{t_0}=\bar{P}(\bar{P}^n U_{t_0})$ and the induction hypothesis, we obtain
\begin{align*}
\bar{P}^{n+1}U_{t_0}(y,i,s)&\leq \int_0^{\infty} \lambda e^{-\lambda t}\int_Y\lambda^n e^{-\lambda(t_0-s-t)}\left(\left(\eta^n \rho_Y^2(u,y^*)+D\sum_{k=0}^{n-1}\eta^k\right)\frac{(t_0-s-t)^n}{n!}\right.\\
&\left.\quad+D\left(\sum_{k=0}^{n-1}\eta^k\right)\frac{(t_0-s-t)^{m+n}}{(m+n)!}\right)\mathbbm{1}_{[0,t_0]}(s+t)J(S_i(t,y),du)\,dt\\
&=\lambda^{n+1} e^{-\lambda(t_0-s)}\int_0^{\infty}\left(\left(\eta^n J\rho_Y^2(\cdot,y^*)(S_i(t,y))+D\sum_{k=0}^{n-1}\eta^k\right)\frac{(t_0-s-t)^n}{n!}\right.\\
&\left.\quad+D\left(\sum_{k=0}^{n-1}\eta^k\right)\frac{(t_0-s-t)^{m+n}}{(m+n)!}\right)\mathbbm{1}_{[0,t_0]}(s+t)\,dt.
\end{align*}
Consequently, using again \eqref{lem:e1}, for $s\leq t_0$, we get
\begin{align*}
\bar{P}^{n+1}&U_{t_0}(y,i,s)\leq\lambda^{n+1} e^{-\lambda(t_0-s)}\int_0^{t_0-s}\left(\left(\eta^n \left(\eta\rho_Y^2(y,y^*)+D\left(\frac{t^m}{m!}+1\right) \right)+D\sum_{k=0}^{n-1}\eta^k\right)\right.
\\
&\left.\quad\times \frac{(t_0-s-t)^n}{n!}+D\left(\sum_{k=0}^{n-1}\eta^k\right)\frac{(t_0-s-t)^{m+n}}{(m+n)!}\right)\,dt\\
&=\lambda^{n+1} e^{-\lambda(t_0-s)}\left(\left(\eta^{n+1} \rho_Y^2(y,y^*)+D\sum_{k=0}^n\eta^k\right)\frac{1}{n!}\int_0^{t_0-s}(t_0-s-t)^n\,dt\right.\\
&\left.\quad+\frac{D\eta^n}{m!\,n!}\int_0^{t_0-s}t^m(t_0-s-t)^n\,dt+D\left(\sum_{k=0}^{n-1}\eta^k\right)\frac{1}{(m+n)!}\int_0^{t_0-s}(t_0-s-t)^{m+n}\,dt \right).
\end{align*}
Further, taking into account that
$$\int_0^T t^k(T-t)^l\,dt=\frac{k!\,l!\,T^{k+l+1}}{(k+l+1)!}\quad \text{for all}\quad k,l\in\n_0,\;T>0,$$
we can finalize the above estimation (in the case where $s\leq t_0$) as follows:
\begin{align*}
&\bar{P}^{n+1}U_{t_0}(y,i,s)\leq \lambda^{n+1} e^{-\lambda(t_0-s)}\left(\left(\eta^{n+1} \rho_Y^2(y,y^*)+D\sum_{k=0}^n\eta^k\right)\frac{(t_0-s)^{n+1}}{(n+1)!}\right.\\
&\left.\quad+\frac{D\eta^n (t_0-s)^{m+n+1}}{(m+n+1)!}+D\left(\sum_{k=0}^{n-1}\eta^k\right)\frac{(t_0-s)^{m+n+1}}{(m+n+1)!}\right)\\
&=\lambda^{n+1} e^{-\lambda(t_0-s)}\left(\left(\eta^{n+1} \rho_Y^2(y,y^*)+D\sum_{k=0}^n\eta^k\right)\frac{(t_0-s)^{n+1}}{(n+1)!}+D\left(\sum_{k=0}^{n}\eta^k\right)\frac{(t_0-s)^{m+n+1}}{(m+n+1)!}\right).
\end{align*}
Obviously, $\bar{P}^{n+1}U_{t_0}(y,i,s)=0$ for $s>t_0$. According to the induction principle, we can therefore conclude that \eqref{lem:e2} indeed holds for all $n\in\n$.

Now, observe that inequality \eqref{lem:e2} applied with $s=0$ gives
\begin{align*}
\bar{P}^n U_{t_0}(y,i,0)&\leq \lambda^n e^{-\lambda t_0}\left(\left(\eta^n\rho_Y^2(y,y^*)+D\sum_{k=0}^{n-1}\eta^k \right)\frac{t_0^n}{n!}+D\left(\sum_{k=0}^{n-1}\eta^k\right)\frac{t_0^{m+n}}{(m+n)!} \right)\\
&\leq e^{-\lambda t_0} \left( \frac{(\eta\lambda t_0)^n}{n!}\rho_Y^2(y,y^*)+\frac{D}{1-\eta}\left(\frac{(\lambda t_0)^n}{n!}+\frac{1}{\lambda^m} \cdot\frac{(\lambda t_0)^{m+n}}{(m+n)!}\right) \right)
\end{align*}
for all $(y,i)\in X$ and $n\in\n_0$ (for $n=0$, this follows trivially from the definition of $U_{t_0}$). Finally, we obtain
\begin{align*}
\sum_{n=0}^{\infty} \bar{P}^n U_{t_0}(y,i,0)&\leq e^{-\lambda t_0}\left(\sum_{n=0}^{\infty} \frac{(\eta\lambda t_0)^n}{n!}\rho_Y^2(y,y^*)+\frac{D}{1-\eta}\left(1+\frac{1}{\lambda^m}\right)\sum_{n=0}^{\infty} \frac{(\lambda t_0)^n}{n!} \right)\\
&=e^{-\lambda t_0}\left(e^{\eta\lambda t_0}\rho_Y^2(y,y^*) +\frac{D}{1-\eta}\left(1+\frac{1}{\lambda^m}\right)e^{\lambda t_0} \right)\\
&=e^{-{{\Gamma}} t_0}\rho_Y^2(y,y^*)+{C}=e^{-{{\Gamma}} t_0}V^2(y,i)+C\quad\text{for all}\quad (y,i)\in X,
\end{align*}
where ${{\Gamma}}:=(1-\eta)\lambda>0$ and $ {C}:=D(1-\eta)^{-1}\left(1+\lambda^{-m}\right)\geq 0$, which completes the proof.
\end{proof}

\begin{proposition}\label{thm:lap}
Suppose that \ref{j1'} and \ref{s0} hold with $a,R\geq 0$ such that $2aR^2<1$.  Then $\{P(t)\}_{t\in\mathbb{R}_+}$ enjoys hypothesis \ref{(h2)} with $V$ given by \eqref{def:V}.
\end{proposition}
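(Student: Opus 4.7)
The plan is to derive the bound in \ref{(h2)} by combining Lemma \ref{lem:gen-lap} with a ``flow correction'' that accounts for the deterministic evolution of $\Psi$ between jump times. Fix $t_0\geq 0$ and recall that, since $\tau_0=0$ and $\tau_n\uparrow\infty$ $\pr_{\bar{\mu}}$-a.s., the events $\{\tau_n\leq t_0<\tau_{n+1}\}$, $n\in\n_0$, partition $\Omega$ (up to a null set). On $\{\tau_n\leq t_0<\tau_{n+1}\}$ the process satisfies $\Psi(t_0)=(S_{\xi(\tau_n)}(t_0-\tau_n,Y(\tau_n)),\xi(\tau_n))$, so using \ref{s1}, \ref{s2} together with \eqref{simple_fact} I obtain
\begin{align*}
V^2(\Psi(t_0))\leq 2L^2 V^2(\Psi(\tau_n))+2M^2(t_0-\tau_n)^{2\zeta}.
\end{align*}
Summing over $n$ and taking $\mathbb{E}_x$, I get
\begin{align*}
P(t_0)V^2(x)\leq 2L^2\sum_{n=0}^{\infty}\mathbb{E}_x\left[V^2(\Psi(\tau_n))\mathbbm{1}_{\{\tau_n\leq t_0<\tau_{n+1}\}}\right]+2M^2\,\mathbb{E}_x\left[(t_0-\tau_{N(t_0)})^{2\zeta}\right],
\end{align*}
where $N(t_0):=\max\{n\in\n_0:\tau_n\leq t_0\}$.

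Next I identify each term on the right with the quantities controlled in Lemma~\ref{lem:gen-lap}. Since $\Delta\tau_{n+1}$ is $\mathrm{Exp}(\lambda)$ and independent of $\mathcal{F}_{\tau_n}$, conditioning on $\mathcal{F}_{\tau_n}$ yields, on $\{\tau_n\leq t_0\}$, that $\pr_{\bar\mu}(\tau_{n+1}>t_0\,|\,\mathcal{F}_{\tau_n})=e^{-\lambda(t_0-\tau_n)}$. Combined with the tower property and the definition \eqref{df:u_t} of $U_{t_0}$, this gives
\begin{align*}
\mathbb{E}_x\left[V^2(\Psi(\tau_n))\mathbbm{1}_{\{\tau_n\leq t_0<\tau_{n+1}\}}\right]=\mathbb{E}_x\left[e^{-\lambda(t_0-\tau_n)}V^2(\Psi(\tau_n))\mathbbm{1}_{[0,t_0]}(\tau_n)\right]=\bar{P}^nU_{t_0}(x,0),
\end{align*}
so Lemma \ref{lem:gen-lap} bounds the first sum by $2L^2\bigl(e^{-\Gamma t_0}V^2(x)+C\bigr)$, with the constants provided therein.

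The remaining ingredient is to control the flow-correction term $\mathbb{E}_x[(t_0-\tau_{N(t_0)})^{2\zeta}]$ by a constant independent of $t_0$. This is the crux of the argument, since a naive estimate produces a $t_0^{2\zeta}$ bound which would spoil~\ref{(h2)}. The key observation is that the ``age'' $t_0-\tau_{N(t_0)}$ of the underlying Poisson process satisfies
\begin{align*}
\pr_{\bar\mu}(t_0-\tau_{N(t_0)}>s)=\pr_{\bar\mu}(\text{no jump in }(t_0-s,t_0])=e^{-\lambda s}\quad\text{for}\quad s\in[0,t_0),
\end{align*}
so it is stochastically dominated by an $\mathrm{Exp}(\lambda)$ random variable. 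Hence $\mathbb{E}_x[(t_0-\tau_{N(t_0)})^{2\zeta}]\leq\Gamma(2\zeta+1)/\lambda^{2\zeta}=:K<\infty$ uniformly in $t_0$. Putting everything together, I conclude that
\begin{align*}
P(t_0)V^2(x)\leq 2L^2 e^{-\Gamma t_0}V^2(x)+\bigl(2L^2 C+2M^2 K\bigr)\quad\text{for all}\quad x\in X,\;t_0\geq 0,
\end{align*}
which is precisely \ref{(h2)} with $A:=2L^2$, $B:=2L^2 C+2M^2 K$ and the same $\Gamma>0$ as in Lemma~\ref{lem:gen-lap}. The main obstacle, as indicated, is the uniform-in-$t_0$ estimate of the age term; everything else is a direct unfolding of the Markov structure and an application of Lemma~\ref{lem:gen-lap}.
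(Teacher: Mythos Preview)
Your proof is correct and follows the same overall strategy as the paper: decompose $P(t_0)V^2(x)$ over the events $\{\tau_n\leq t_0<\tau_{n+1}\}$, identify the leading sum with $\sum_{n\geq 0}\bar{P}^nU_{t_0}(x,0)$ via conditioning, and apply Lemma~\ref{lem:gen-lap}. The only difference is in the treatment of the flow-correction term: the paper replaces $(t-\tau_n)^{2\zeta}$ by $(t-\tau_n)^m+1$ with $m=\lceil 2\zeta\rceil$ and then evaluates $\sum_{n\geq 1}\mathbb{E}\bigl[((t-\tau_n)^m+1)e^{-\lambda(t-\tau_n)}\mathbbm{1}_{[0,t]}(\tau_n)\bigr]$ explicitly using the Erlang density of $\tau_n$, whereas you recognise this sum as $\mathbb{E}\bigl[(t_0-\tau_{N(t_0)})^{2\zeta}\bigr]$ and bound it via the stochastic domination of the Poisson age by an $\mathrm{Exp}(\lambda)$ variable. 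Your argument is shorter and avoids the artificial passage to integer powers; the paper's route is more computational but yields the same conclusion with the same constants $A=2L^2$ and $\Gamma=(1-2aL^2)\lambda$ (only $B$ differs).
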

\begin{proof}
Let ${{\Gamma}}>0$ and $C\geq 0$ be the constants for which the assertion of Lemma \ref{lem:gen-lap} is valid. Further, fix $t\geq 0$, $x=(y,i)\in X$, and put $\bar{x}:=(x,0)$. Then, we can write
\begin{equation}
\label{e:3}
P(t)V^2(x)=\ew_{\bar{x}}\left(V^2(\Psi(t))\right)=\sum_{n=0}^{\infty} \ew_{\bar{x}}\left(V^2(\Psi(t))\mathbbm{1}_{\{\tau_n\leq t<\tau_{n+1}\}}\right).
\end{equation}

Now, let $\{\mathcal{F}_n\}_{n\in\n_0}$ denote the natural filtration of the chain $\Phi$, and fix an arbitrary $n\in\n_0$. Taking into account that
$\pr_{\bar{x}}(\tau_{n+1}>t\,|\,\mathcal{F}_n)=\pr_x(\tau_{n+1}>t\,|\,\tau_n)=e^{-\lambda(t-\tau_n)}$ on the set $\{\tau_n\leq t\}$ (which follows from \eqref{e:tau}), we get
\begin{align*}
\ew_{\bar{x}}\left(V^2(\Psi(t))\mathbbm{1}_{\{\tau_n\leq t<\tau_{n+1}\}}\,|\,\mathcal{F}_n\right)&=\ew_{\bar{x}}\left(\rho_Y^2\left(S_{\xi_n}(t-\tau_n,Y_n),y^*\right) \mathbbm{1}_{\{\tau_n\leq t<\tau_{n+1}\}} \,|\,\mathcal{F}_n\right)\\
&=\pr_{\bar{x}}(\tau_{n+1}>t\,|\,\mathcal{F}_n)\,\rho_Y^2\left(S_{\xi_n}(t-\tau_n,Y_n),y^*\right)\mathbbm{1}_{\{\tau_n\leq t\}}\\
&=e^{-\lambda(t-\tau_n)}\rho_Y^2\left(S_{\xi_n}(t-\tau_n,Y_n),y^*\right)\mathbbm{1}_{\{\tau_n\leq t\}}.
\end{align*}
Further, letting $m:=2N$, and using \ref{s0} and \eqref{simple_fact}, we can conclude that
\begin{align*}
\ew_{\bar{x}}&\Big(V^2(\Psi(t))\mathbbm{1}_{\{\tau_n\leq t<\tau_{n+1}\}}\,|\,\mathcal{F}_n\Big)\\
&\leq 2e^{-\lambda(t-\tau_n)} \left(R^2\rho_Y^2(Y_n,y^*)+2M^2\left((t-\tau_n)^{2N}+1\right)  \right) \mathbbm{1}_{\{\tau_n\leq t\}}\\
&\leq 2R^2 e^{-\lambda(t-\tau_n)}\rho_Y^2(Y_n,y^*)\mathbbm{1}_{[0,t]}(\tau_n)+4M^2e^{-\lambda(t-\tau_n)}\left((t-\tau_n)^m+1\right)\mathbbm{1}_{[0,t]}(\tau_n).
\end{align*}
Taking the expectation of both sides of this inequality gives
\begin{align}
\label{e:4}
\begin{split}
\ew_{\bar{x}}\left(V^2(\Psi(t))\mathbbm{1}_{\{\tau_n\leq t<\tau_{n+1}\}}\right)&\leq 2 R^2 \ew_{\bar{x}}\left(e^{-\lambda(t-\tau_n)}\rho_Y^2(Y_n,y^*) \mathbbm{1}_{[0,t]}(\tau_n)\right)\\
&\quad+4 M^2\ew_{\bar{x}}\left(e^{-\lambda(t-\tau_n)}\left((t-\tau_n)^m+1\right)\mathbbm{1}_{[0,t]}(\tau_n)\right).
\end{split}
\end{align}
If we now sum up both sides of \eqref{e:4} over all $n\in\n_0$, then, returning to \eqref{e:3}, we can deduce that
\begin{align}
\label{e:3b}
\begin{split}
P(t) V^2(x&)\leq 2 R^2\sum_{n=0}^{\infty} \ew_{\bar{x}}\left(e^{-\lambda(t-\tau_n)}\rho_Y^2(Y_n,y^*)\mathbbm{1}_{[0,t]}(\tau_n)\right)\\
&\quad+4 M^2\left(\sum_{n=1}^{\infty}\ew_{\bar{x}}\left(e^{-\lambda(t-\tau_n)}\left((t-\tau_n)^m+1\right)\mathbbm{1}_{[0,t]}(\tau_n)\right)+e^{-\lambda t}\left(t^m+1\right)\right).
\end{split}
\end{align}

On the other hand, it follows from Lemma \ref{lem:gen-lap} that
\begin{align}
\label{e:5}
\begin{split}
\sum_{n=0}^{\infty}\ew_{\bar{x}}\left(e^{-\lambda(t-\tau_n)}\rho_Y^2(Y_n,y^*)\mathbbm{1}_{[0,t]}(\tau_n)\right)&=\sum_{n=0}^{\infty}\ew_{\bar{x}}\left( U_t(\Phi_n,\tau_n)\right)=\sum_{n=0}^{\infty}\bar{P}^nU_t(\bar{x})\\
&\leq e^{-{{\Gamma}} t}V^2(x)+C,
\end{split}
\end{align}
where $U_t:\bar{X}\to\mathbb{R}_+$ is given by \eqref{df:u_t} (with $t$ in place of $t_0$).
Moreover, having in mind that~$\tau_n$ has the Erlang distribution, we get
\begin{align}
\label{e:6}
\sum_{n=1}^{\infty}\ew_{\bar{x}}&\left(e^{-\lambda(t-\tau_n)}\left((t-\tau_n)^m+1\right)\mathbbm{1}_{[0,t]}(\tau_n)\right)=\sum_{n=1}^{\infty}\int_0^t e^{-\lambda(t-s)}\left((t-s)^m+1\right) e^{-\lambda s}\frac{\lambda^n s^{n-1}}{(n-1)!}\,ds \nonumber\\
&=\lambda e^{-\lambda t}\int_0^t e^{\lambda s}\left((t-s)^m+1\right)\,ds=\lambda e^{-\lambda t}\int_0^t e^{\lambda(t-u)}\left(u^m+1\right) \,du\\
&=\lambda \int_0^t e^{-\lambda u}\left(u^m+1\right)\,du\leq \lambda \int_0^{\infty}e^{-\lambda u} \left(u^m+1\right)\,du=\frac{m!}{\lambda^m}+1<\infty.\nonumber
\end{align}

Finally, applying \eqref{e:3b}-\eqref{e:6}, we infer that
\begin{align*}
P(t)V^2(x)&\leq 2 R^2\left(e^{-{{\Gamma}} t}V^2(x)+C\right)+4M^2\left(\frac{m!}{\lambda^m}+1+e^{-\lambda t}\left(t^m+1\right)\right)\leq A e^{-{{\Gamma}} t}V^2(x)+ B 
\end{align*}
with
$A=2R^2$ and $B:=2\left(R^2 {C}+2M^2\left(m!\lambda^{-m}+1 \right)+2M^2\sup_{t \geq 0}e^{-\lambda t} \left(t^m+1\right)\right)$.
\end{proof}

Let us now consider two conditions constituting strengthened forms of \cite[(S1)]{ergodic_pdmp}, namely:
\begin{enumerate}[label=\textnormal{(S1*)}]
\item\label{s1*} There exist $M\geq 0$ and $N\in\n$ such that
$$ \max_{i\in I}\sup_{y\in Y}\rho_Y(S_i(t,y), y)\leq M\left(t^N+1\right)\quad\text{for}\quad t\geq 0;\vspace{-0.1cm}$$\vspace{-0.8cm}
\end{enumerate}
\begin{enumerate}[label=\textnormal{(S1')}]
\item\label{s1'} There exist $M\geq 0$ and $N\in\n$ such that
$$\max_{i\in I} \rho_Y(S_i(t,y^*), y^*)\leq M\left(t^N+1\right)\quad\text{for}\quad t\geq 0.\vspace{-0.1cm}$$\vspace{-0.8cm}
\end{enumerate}
In addition to that, recall condition \cite[(S2)]{ergodic_pdmp}:
\begin{enumerate}[label=\textnormal{(S2)}]
\item\label{s2} There exist $L\geq 1$ and $\alpha<\lambda$ such that
$$\rho_Y(S_i(t,y_1),S_i(t,y_2))\leq Le^{\alpha t}\rho_Y(y_1,y_2)\quad\text{for}\quad t\geq 0,\;y_1,y_2\in Y,\;i\in I.\vspace{-0.1cm}$$
\end{enumerate}

\begin{remark} \label{rem:implications}
Using the triangle inequality, it is easy to verify the following implications: 
$$
\begin{tikzcd}[arrows=Rightarrow, sep=small]
  & \text{\cite[(S1) $\wedge$ (S3)]{ergodic_pdmp}}\\
\ref{s1*} \arrow[ur, "\varphi(t)=\varphi_1(t) \atop \mathcal{L}(y)=\mathcal{L}_1(y)"] \arrow[dr, "R=1"'] & &  \arrow[ul, "\varphi(t)=\varphi_2(t) \atop \mathcal{L}(y)=\mathcal{L}_2(y)"'] \ref{s1'} \arrow[dl, "R=L"] & \hspace{-0.7cm}\wedge \; [\ref{s2} \text{ with } \alpha\leq 0],\\
& \ref{s0}
\end{tikzcd}
$$\vspace{-0.2cm}
where \cite[(S3)]{ergodic_pdmp} is obtained (depending on the assumptions) with
$$\varphi_1(t):=2M(t^N+1),\; \mathcal{L}_1(y)=1,\;\text{or}\;\; \varphi_2(t)=2(Le^{\alpha t}+M(t^N+1)), \;\;\mathcal{L}_2(y)=\rho_Y(y,y^*)+1.$$
\end{remark}
\vspace{-0.1cm}
It is worth stressing here that conditions \ref{s1'} and \ref{s2} with $\alpha\leq 0$ and $L=1$ are met, e.g., by the semiflows genereted by a wide class of dissipative differential equations in Hilbert spaces. If the operators involved in such equations are bounded, then even \ref{s1*} holds. This is explained in detail in \hbox{\cite[Remark 4.3]{ergodic_pdmp}}, which, in turn, is based on \hbox{\cite[Chapter 5.2]{ito}}.

Formulating the main result of this section, apart from hypotheses \ref{s1*}, \ref{s1'}, \ref{s2} and \ref{j1'}, we shall also use condition \cite[(J2)]{ergodic_pdmp}, which ensures the existence of a Markovian coupling of $J$ with certain specific properties. Let us quote it to make our result self-contained. Below, a \emph{substochastic kernel} will mean a map defined similarly to a stochastic kernel (see Section \ref{sec:markov_operators}), but allowed to be a subprobability measure with respect to the second variable.

\begin{enumerate}[label=\textnormal{(J2)}]
\item\label{j2}There exists a substochastic kernel $Q_J: Y^2\times\mathcal{B}(Y^2)\to [0,1]$ satisfying
$$
Q_J((y_1,y_2), B\times Y)\leq J(y_1,B)\quad\text{and}\quad Q_J((y_1,y_2), Y\times B)\leq J(y_2,B)
$$
for any $y_1,y_2\in Y$ and $B\in\mathcal{B}(Y)$ such that, for certain constants $\tilde{a}, l\geq 0$, we have
\begin{gather*}
\int_{Y^2}\rho_Y(u_1,u_2)\,Q_J((y_1,y_2),du_1\times du_2)\leq \tilde{a}\rho_Y(y_1,y_2)\quad\text{for any}\quad y_1,y_2\in Y,\\
\inf_{(y_1,y_2)\in Y} Q_J((y_1,y_2), \{(u_1,u_2)\in Y^2:\, \rho_Y\big(u_1,u_2)\leq \tilde{a}\rho_Y(y_1,y_2) \}\big)>0,\\
Q_J((y_1,y_2), Y^2)\geq 1-l\rho_Y(y_1,y_2) \quad \text{for any}\quad y_1,y_2\in Y.
\end{gather*}
\end{enumerate}

We can now state the announced central convergence criterion for the PDMP under consideration.

\begin{theorem}\label{cor:clt}
Suppose that the kernel $J$ is Feller, \ref{j1'} holds (with certain $a,b\geq 0$), and that \ref{j2} is satisfied with $\tilde{a}:=\sqrt{a}$. Moreover, assume that one of the following statements is fulfilled:
\begin{enumerate}[label=\textnormal{(\roman*)}]
\item\label{cor-clt:i} \ref{s1*}  and \ref{s2} hold with $\alpha<\lambda$ and $L\geq 1$ such that $a<\min\{L^{-2}(1-\alpha\lambda^{-1})^2,\, 2^{-1}\}$,
\item\label{cor-clt:ii} \ref{s1'} and \ref{s2} is satisfied with  $\alpha\leq 0$ and $L\geq 1$ such that $a<(2L^2)^{-1}$.

\end{enumerate}
{Then the assertions of both Theorem \ref{thm:main} and Proposition \ref{prop:FCLT} are valid} for the semigroup  $\{P(t)\}_{t\in\mathbb{R}_+}$ determined by \eqref{def:Pt} and \eqref{def:P_bar}, with $V$ given by \eqref{def:V} and $\sigma^2$ satisfying \eqref{e:rep_sigma}, provided that the constant $c$, involved in \eqref{def:rho_c}, is sufficiently large. 
\end{theorem}

\begin{proof}
As mentioned earlier, \ref{j1'} implies \hbox{\cite[(J1)]{ergodic_pdmp}} with $\tilde{a}=\sqrt{a}$, and  \ref{j2} is just assumed. Further, according to Remark \ref{rem:implications}, each of hypotheses \ref{cor-clt:i}, \ref{cor-clt:ii} guarantees that \hbox{\cite[(S1)-(S3)]{ergodic_pdmp}} hold with $\alpha,L$ satisfying the inequality $\tilde{a}L+\alpha/\lambda<1$, which coincides with that assumed in \hbox{\cite[(4.8)]{ergodic_pdmp}}. As has already been said, this, together with the Feller property of $J$, suffices for hypotheses \ref{(h0)} and \ref{(h1)} to hold. On the other hand, appealing again to Remark~\ref{rem:implications}, we also know that \ref{s0} is satisfied with $R=1$ in case \ref{cor-clt:i} or with $R=L$ in case \ref{cor-clt:ii}, and $2aR^2<1$ in each of these cases. Consequently, Proposition \ref{thm:lap} yields that hypothesis \ref{(h2)} holds as well. Hence, Theorem \ref{thm:main} is indeed valid for the semigroup being considered. 

{
Furthermore, since $\{P(t)\}_{t\in\mathbb{R}_+}$ is stochastically continuous (at $t=0$) by statement (iii) of \hbox{\cite[Lemma 5.1]{ergodic_pdmp}}, it follows that $\sigma^2$ can be represented as indicated in Theorem \ref{thm:rep_sigma}, and that Proposition \ref{prop:FCLT} is also applicable to the given semigroup.}
\end{proof}

\begin{remark} \label{rem:cost_c}
By saying that $c$ should be \emph{sufficiently large} we mean that it supposed to be grater than some specific value dependent on the constants and functions involved in conditions \ref{j1'}, \ref{s2} and \cite[(S3)]{ergodic_pdmp} (which ensures that \cite[Proposition 7.1]{ergodic_pdmp} is valid). To be more precise, letting $\varphi:\mathbb{R}_+\to\mathbb{R}_+$ and $\mathcal{L}:Y\to\mathbb{R}_+$ be the functions for which (S3) holds (see Remark~\ref{rem:implications}),  one may require that
$$c\geq \frac{\lambda-\alpha}{L}\left(M_{\mathcal{L}}{K}_{\varphi}+\frac{M_{\mathcal{L}}M_{\varphi}}{\lambda}\right)+1,$$
where $K_{\varphi}:=\int_0^{\infty} \varphi(t)e^{-\lambda t}\,dt$, $M_{\varphi}:=\sup_{t\in\mathbb{T}}\varphi(t)$ with $\mathbb{T}\subset[0,\infty)$ being an arbitrary bounded interval of positive length for which\, $\sup_{t\in\mathbb{T}}e^{\alpha t}\leq \lambda(\lambda-\alpha)^{-1}$, and $M_{\mathcal{L}}:=\sup_{\rho_Y(y_*,y)<r}\mathcal{L}(y)$ with $r:=4b(1-a)^{-1}$ and
$$\bar{a}:=\frac{\sqrt{a}\lambda L}{\lambda-\alpha},\quad \bar{b}=\sqrt{a}\lambda\left(\frac{MN!}{\lambda^{N+1}}+\frac{1}{\lambda} \right)+ \sqrt{b}.$$
\end{remark}

An important example of the PDMPs under consideration are those with $J$ defined as the transition law of a random iterated function system. In this case $J$ takes the form
$$
J(y,B)=\int_{\Theta}\mathbbm{1}_B(w_{\theta}(y))\,p_{\theta}(y)\,\vartheta(d\theta)\quad\text{for}\;\; y\in Y,\;B\in\mathcal{B}(Y),
$$
where $\{w_{\theta}:\,\theta\in\Theta\}$ is an arbitrary family of continuous transformations from $Y$ to itself, indexed by the elements of some topological measure space $(\Theta,\vartheta)$, and $\{p_{\theta}:\,\theta\in \Theta\}$ is an associated family of state-dependent densities with respect to $\vartheta$. In \hbox{\cite[Proposition 7.2]{ergodic_pdmp}}, we have provided a set of conditions guaranteeing that $J$ of this form satisfies hypotheses \hbox{\cite[(J1) and (J2)]{ergodic_pdmp}} with $Q_J$ given by
$$Q_J((y_1,y_2),C):=\int_{\Theta}\mathbbm{1}_C(w_{\theta}(y_1),w_{\theta}(y_2))\min\{p_{\theta}(y_1),p_{\theta}(y_2)\}\,\vartheta(d\theta)$$
for any $y_1,y_2\in Y$ and $C\in\mathcal{B}(Y^2)$. These conditions can be easily modified to also ensure \ref{j1'}, as required in {Theorem \ref{cor:clt}}. The details are left to the reader.

\section*{Acknowledgments}
The authors thank the anonymous referees for their constructive comments that have helped to significantly improve the paper.
The work of H.W.-\'S. is supported by the project \emph{Near-term Quantum Computers: challenges, optimal implementations and applications} under grant no. POIR.04.04.00-00-17C1/18-00, which is carried out within the Team-Net programme of the
Foundation for Polish Science, co-financed by the European Union under the European Regional Development Fund.

\bibliographystyle{plain}
\bibliography{references}

\end{document}